\titleformat{\section}[runin]
{\normalfont\normalsize\bfseries}
{\thesection}
{1em}
{\addperiod}
\titleformat{\subsection}[runin]
{\normalfont\normalsize\bfseries}
{\thesubsection}
{1em}
{\addperiod}
\titleformat{\subsubsection}[runin]
{\normalfont\normalsize\bfseries}
{\thesubsubsection}
{1em}
{\addperiod}
\newcommand{\addperiod}[1]{#1.}
\definecolor{myred}{rgb}{0.75,0,0}
\definecolor{mygreen}{rgb}{0,0.5,0}
\definecolor{myblue}{rgb}{0,0,0.65}
\theoremstyle{plain}
\newtheorem{theorem}{Theorem}[section]
\newtheorem{proposition}[theorem]{Proposition}
\newtheorem{lemma}[theorem]{Lemma}
\newtheorem{corollary}[theorem]{Corollary}
\theoremstyle{definition}
\newtheorem{definition}[theorem]{Definition}
\newtheorem{remark}[theorem]{Remark}
\newtheorem{example}[theorem]{Example}
\theoremstyle{remark}
\newtheorem{notation}[theorem]{Notation and Setup}
\numberwithin{equation}{section}
\newcommand\nc{\newcommand}
\nc\on{\operatorname}
\nc\renc{\renewcommand}
\newcommand\G{{\mathbf G}}
\renewcommand\P{{\mathbf P}}
\newcommand\A{{\mathbf A}}
\newcommand\fm{{\mathfrak m}}
\newcommand \m{{\mathcal M}}
\newcommand \barm{\overline{{\mathcal M}}}
\renewcommand \to{\longrightarrow}
\DeclareMathOperator\Hyp{Hyp}
\DeclareMathOperator\Spec{Spec}
\DeclareMathOperator\length{length}
\DeclareMathOperator\Span{Span}
\DeclareMathOperator\PGL{PGL}
\DeclareMathOperator\dm{dim}
\def\listtodoname{List of Todos}
\def\listoftodos{\@starttoc{tdo}\listtodoname}
\title{Moduli of linear sections of a general hypersurface}
\author{Anand Patel} \address{Dept. of
Mathematics, Boston College, Chestnut Hill, MA 02467 USA}
\email{anand.patel@bc.edu}
\date{\today}
\begin{document}

\maketitle

\section{Introduction and main results}

One way to create  new projective varieties from a given variety $X \subset \P^{r}$ is to intersect  with  linear spaces. It is only natural to want to understand the collection of varieties obtained in this way.  

There are different approaches to assigning ``moduli'' to the linear sections of $X$.  Algebraically, we may assign a point in an appropriate Hilbert scheme $H$.  This works locally as we perturb the linear section, but in order to produce a globally defined variation, we must assign the corresponding point in the quotient of the Hilbert scheme  by the action of $\PGL$. The usual technical issues with GIT emerge from this approach, but ultimately one can construct an algebraic moduli map: 
\begin{align*}
  \mu_{m}: \G\left( m,r \right) \dashrightarrow H / \PGL
\end{align*} 
which, to a general $m$-plane $\Lambda$ assigns the variety $[X \cap \Lambda]$.
Over the complex numbers, the Hodge structure on the middle cohomology of a linear section $X \cap \Lambda$  provides another way of attaching moduli to the section. Again, under ideal conditions, one gets a period map: 
\begin{align*}
  \pi_{m}: \G\left( m,r \right) \dashrightarrow {\mathcal D}/\Gamma
\end{align*}
In either case, whether algebraic or analytic, we call these maps {\sl moduli maps}.  Three questions of particular interest are: 
\begin{enumerate}
  \item What is the dimension of the image of a moduli map?
  \item When is a moduli map generically injective?
  \item If a moduli map is generically finite, what is its degree?

\end{enumerate}
These questions, especially the first two, are certainly not new, and have held the attention of many authors. The first question is infinitesimal in nature -- one has to compute the rank of the differential of the moduli map. The second question is global in nature, and the third question is enumerative.

Although the local and global analyses of moduli maps  would seem mostly disconnected, the seminal work of Donagi \cite{donagi}  shows that the infinitesimal variation of Hodge structure (IVHS) of a variety can, in many situations, recover the variety itself.  From the perspective of varying linear slices, we may recast Donagi's result as proving the generic injectivity of the moduli map for hyperplane sections of the $d$-uple Veronese varieties $X \subset \P^{r}$. In the complex analytic setting, such generic injectivity results are called {\sl weak Torelli} or {\sl generic Torelli} theorems. 

In this paper, we prove a generic Torelli theorem  for the family of $m$-plane slices of a generic hypersurface $X$. Numerous generic Torelli statements exist in the literature, including results for complete intersections in \cite{konno} and \cite{terasoma}, but the problem of generic Torelli for linear sections of a fixed hypersurface seems to have fallen through the cracks. 

Question (1) is easy to answer in our setting -- a simple first-order calculation left to the reader shows that the images of all  moduli maps are as large as possible. We note, however, that this question is easy only because of the nebulous requirement that $X$ be ``generic''.  Once we try to qualify what ``generic'' means, question (1) becomes much more challenging. For example, after considerable effort, in \cite{joemazurpanda} it is shown (among other things) that if we let $X$ be an arbitrary {\sl smooth} hypersurface of degree $d$, then, provided $r$ is very very large, the moduli maps for linear slices has image as large as possible. 

On the other extreme, an interesting result of Beauville \cite{beauville} characterizes the degree $d$ smooth  hypersurfaces $X$ for which the moduli map of hyperplane sections is constant -- this only happens when $d=2$, or when $X$ is a Fermat hypersurface and $d-1$ is a power of the characteristic of the (algebraically closed) ground field.

We  let $\Hyp(d,m)$ denote the moduli space of degree $d$ hypersurfaces in $\P^{m}$, modulo projective equivalence.

\begin{theorem}
  \label{theorem:main}
  Assume $d > 2r + 1$, and let $X \subset \P^{r}$ be a general hypersurface of degree $d$. Then the moduli maps $\mu_{m} : \G\left( m,r \right) \dashrightarrow \Hyp(d,m)$ are generically injective for all $m$.
\end{theorem}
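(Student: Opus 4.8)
The plan is to first convert the a priori abstract isomorphism problem into a projective one, and then to rule out unexpected matchings between sections lying in different planes.

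\emph{Step 1: abstract isomorphisms are projective.} Let $\Lambda$ be a general $m$-plane and $Y = X \cap \Lambda \subset \Lambda \cong \P^m$, and suppose $Y \cong Y' := X \cap \Lambda'$ via some isomorphism $\phi$. For $m \ge 3$ and a general such section, $\pic(Y) = \Z\cdot\so_Y(1)$ (Grothendieck--Lefschetz for $m\ge 4$, and Noether--Lefschetz together with the genericity of $X$ for $m=3$), while adjunction gives $K_Y = \so_Y(d-m-1)$ with $d-m-1>0$ since $d>2r+1>m+1$. Hence the ample generator of $\pic(Y)$, namely $\so_Y(1)$, is intrinsic, and because $H^0(\P^m,\so(1))\to H^0(Y,\so_Y(1))$ is an isomorphism the embedding $Y\hookrightarrow\P^m$ is recovered from the abstract variety $Y$. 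Therefore $\phi$ is the restriction of a linear isomorphism $\Phi:\Lambda\xrightarrow{\sim}\Lambda'$ carrying $Y$ to $Y'$. The surface and curve cases $m\le 2$ I would treat separately: for $m=2$ one recovers $\so_Y(1)$ from the known multiple $K_Y=\so_Y(d-3)$ once $\pic$ is controlled, and the zero-dimensional case $m=1$ (configurations of $d$ points on $\Lambda\cong\P^1$) reduces to the statement that a general configuration has no projective self-symmetries and arises from a unique $\Lambda$, which can serve as the base of an induction.

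\emph{Step 2: reformulation via an incidence variety.} Because $X$ is a general hypersurface of degree $d\ge 3$, its automorphism group $\on{Aut}(X)$ is trivial. Consider
\begin{align*}
  I_m = \{(\Lambda,\Lambda',\Phi) : \Phi\in\on{Isom}(\Lambda,\Lambda'),\ \Phi(X\cap\Lambda)=X\cap\Lambda'\},
\end{align*}
with its projection $p:I_m\to\G(m,r)$, $(\Lambda,\Lambda',\Phi)\mapsto\Lambda$. The triples with $\Lambda=\Lambda'$ and $\Phi=\id$ form a section $\Delta$ of $p$, the \emph{diagonal}. By Step 1 the fiber $p^{-1}(\Lambda)$ records exactly the $m$-planes $\Lambda'$ with $X\cap\Lambda'\cong X\cap\Lambda$, so $\mu_m$ is generically injective if and only if $\Delta$ is the only component of $I_m$ dominating $\G(m,r)$ and $p$ is unramified along $\Delta$ over a general point. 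Triviality of $\on{Aut}(X)$ already rules out the ``internal'' matchings with $\Lambda=\Lambda'$ but $\Phi\ne\id$.

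\emph{Step 3 (the main obstacle): excluding extra components.} It remains to show that no component $Z\subset I_m$ with $\Lambda'\ne\Lambda$ generically dominates $\G(m,r)$, and here I would exploit the very large degree. Infinitesimally, a tangent vector to a would-be family of nontrivial matchings produces, after choosing coordinates with $F|_\Lambda=f$, a differentiated identity of the form $\Phi^*(F|_{\Lambda'})=\lambda\,f$; translating this into the Jacobian (Gorenstein) ring $R_f$ of $f$ yields linear-algebra constraints governed by the multiplication maps $R_f^a\times R_f^b\to R_f^{a+b}$. The hypothesis $d>2r+1$ is precisely what makes these maps injective or surjective in the required degrees uniformly in $m$ (this being the algebraic analogue of the degree bounds in Donagi's symmetrizer argument), forcing the infinitesimal matching to be tangent to $\Delta$. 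Globalizing this infinitesimal rigidity is the crux: I would pair it with a specialization of $(X,\Lambda)$ to a configuration where $p^{-1}(\Lambda)$ can be computed directly, and use the irreducibility and generic smoothness of the universal incidence family to propagate the equality ``fiber $=\Delta$'' to the general member. Verifying that the multiplication-map estimates persist across the entire range $1\le m\le r-1$, and that the chosen specialization acquires no spurious limiting matchings, is where the real difficulty lies.
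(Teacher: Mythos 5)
There is a genuine gap: the theorem's entire content is concentrated in what you call Step 3 (and in the $m=1$ base case of Step 1), and neither is actually proved. In Step 1 you remark that the zero-dimensional case ``reduces to the statement that a general configuration \dots arises from a unique $\Lambda$, which can serve as the base of an induction'' --- but that statement \emph{is} the hard theorem. Since $\dim \G(1,r) = 2r-2 < d-3 = \dim \m_{0,d}/S_{d}$ when $d > 2r+1$, generic injectivity of $\mu_{1}$ is not a dimension count; it is a global statement that could fail (the map could be $2\!:\!1$ onto its image), and indeed the paper devotes essentially all of its machinery to it: versality of the point-line correspondence at $k$-incident lines ($k = d-(2r-2)$), monodromy of such lines, resolution of the indeterminacy of $\mu_{1}$ on $\Sigma^{[d]}$, and a degeneration to ``dandelion'' stable curves in $\barm_{0,d}$ whose unique preimage forces injectivity. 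None of this is replaced by anything in your proposal. Likewise, Step 3 is a plan, not a proof: you state yourself that globalizing the infinitesimal rigidity and verifying the multiplication-map estimates ``is where the real difficulty lies.'' Moreover, the Donagi-style symmetrizer heuristic does not transparently apply here --- the relevant Jacobian ring is that of the degree-$d$ form on $\Lambda \cong \P^{m}$, whose multiplication maps depend on $d$ and $m$ but not on $r$, so the claim that $d > 2r+1$ ``is precisely what makes these maps injective or surjective'' is unsupported; in the paper the bound $d \geq 2r+2$ enters for an entirely different, dimensional reason (it forces $k \geq 4$, so that $k$-incident lines carry enough cross-ratio data).

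Two smaller points. First, Step 1 is largely moot: $\Hyp(d,m)$ is defined in the paper as degree-$d$ hypersurfaces in $\P^{m}$ modulo \emph{projective} equivalence, so the matchings to be excluded are already linear isomorphisms $\Phi:\Lambda \to \Lambda'$; the Grothendieck--Lefschetz discussion proves a stronger statement than is needed. Second, your overall architecture (treat each $m$ separately via an incidence variety $I_{m}$) is genuinely different from the paper's, which proves only the $m=1$ case directly and then deduces all $m \geq 2$ by the elementary observation that a projective isomorphism $\Lambda \to \Lambda'$ matching the sections would produce, for every general line $\ell \subset \Lambda$, a partner line with projectively equivalent intersection --- contradicting injectivity of $\mu_{1}$ unless $\Lambda = \Lambda'$. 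If you want to salvage your write-up, the most economical fix is to adopt that reduction and then supply an actual proof of the $m=1$ case, which is where all the work lives.
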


The generic Torelli theorems mentioned earlier are proven using variants of Donagi's approach. Our technique for proving \autoref{theorem:main} is completely different; we deduce the result by a simple reduction to the case $m=1$ of lines. 

In the case of lines, we study the moduli map $\mu_{1}$  by resolving its indeterminacy (at some special points), and then computing its degree at special points in the blown up domain. The points we consider are deep in the locus of indeterminacy, so the details of the resolution require special attention.  We are able to carry out the resolution of indeterminacy because of an elementary, but fundamental {\sl versality} result (\autoref{corollary:versalkincident}) which essentially allows us to write the moduli map explicitly. This versality result is interesting in its own right, and should be true in a much larger context.  Such a generalization will be the subject of future work.

Our analysis yields complete solutions to questions (2) and (3) in the case of lines. In order to answer question (2), we study the monodromy on the set of $k$-incident lines to $X$, where $k = d-(2r-2)$.  These are lines which meet $X$ at $k$ points, i.e. {\sl as infrequently as possible}.

The set of $k$-incident lines to $X$ divides itself into groups according to the multiplicity vector $(m_{1}, \dots, m_{k})$,  $\sum_{i=1}^{k}m_{i}=d$, describing the way the lines meet $X$. Clearly, monodromy preserves the multiplicity vector of a $k$-incident line.

We prove the following generalization of a monodromy result on flex lines of plane curves found in \cite{joe:galois}:
\begin{theorem}
  \label{theorem:monodromymain}
  Let $X$ be a general hypersurface of degree $d \geq 2r$, let $k = d - (2r-2)$, and let $(m_{1}, \dots, m_{k})$ be a multiplicity vector with at least two distinct entries. 

  Then the monodromy group of the set of $k$-incident lines to $X$ with multiplicity vector $(m_{1}, \dots, m_{k})$ is the full symmetric group.
\end{theorem}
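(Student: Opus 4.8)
The plan is to realize the monodromy group $G$ as a subgroup of the symmetric group $S_{n}$ acting on the fiber over a general $X$, namely the finite set of $k$-incident lines with the prescribed multiplicity vector, and to show $G = S_{n}$ by establishing three facts: (i) $G$ is transitive, (ii) $G$ is $2$-transitive, hence primitive, and (iii) $G$ contains a transposition. Granting these, the classical theorem (going back to Jordan) that a primitive permutation group containing a transposition must be the full symmetric group finishes the argument. The dimension count $\dm \G(1,r) = 2r-2 = d-k$ shows the relevant locus is expected to be finite; I would invoke the versality result (\autoref{corollary:versalkincident}) to guarantee that for general $X$ this scheme of lines is finite, reduced, and nonempty, so that $G$ is well defined and the covering is genuinely branched.

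For transitivity I would introduce the labeled incidence variety $\widetilde{\Phi}$ of pairs $(X,(L;p_{1},\dots,p_{k}))$ where $L$ is $k$-incident to $X$ and the restriction $F|_{L}$ vanishes to order exactly $m_{i}$ at $p_{i}$, and prove $\widetilde{\Phi}$ is irreducible. This follows by projecting to the configuration space $B$ of a line with $k$ distinct marked points: the base $B$ is irreducible, fibering over the Grassmannian $\G(1,r)$ with fibers the configuration space of $k$ points on $\P^{1}$, while the fibers of $\widetilde{\Phi} \to B$ are open subsets of the linear system of hypersurfaces meeting $L$ to the prescribed orders at the $p_{i}$. Indeed, prescribing contact to a fixed order at a fixed point is a linear condition on the coefficients of $F$, and the versality result ensures these conditions are independent, so the fibers are irreducible of constant dimension. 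Irreducibility of $\widetilde{\Phi}$ then descends to the space of $k$-incident lines and yields transitivity of $G$.

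For $2$-transitivity I would run the identical fibration argument one level up, over the irreducible space of ordered pairs of distinct configured lines in $\P^{r}$. Imposing the two independent packages of linear contact conditions again cuts out irreducible linear fibers; here one must check that the conditions stay independent on the dense open locus where the two lines are disjoint and all contact points distinct, which holds because $d \geq 2r$ leaves ample room. Irreducibility of this pairs incidence variety gives $2$-transitivity, hence primitivity of $G$.

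The construction of a transposition is where I expect the main obstacle, and where the hypothesis of two distinct entries is essential. I would exhibit a point $X_{0}$ of the branch divisor over which exactly two of the $k$-incident lines collide with analytic branching of order two; this is the higher-dimensional analog of the hyperflex degeneration used for flex lines in \cite{joe:galois}. Choosing two marked points carrying distinct multiplicities $m_{i} \neq m_{j}$, I would degenerate $X$ so that a single line $L_{0}$ acquires one extra order of contact, forcing the finite scheme of $k$-incident lines to become non-reduced of length two precisely at $L_{0}$; using the versality result to produce explicit local coordinates on $\G(1,r)$ realizing the $2r-2$ contact conditions as a versal unfolding, one checks that along the generic point of the discriminant the Jacobian drops rank by exactly one, so the two sheets meet to order two and the local monodromy is a transposition. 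The distinctness of two entries is what breaks the symmetry among the contact points: were all $m_{i}$ equal, the colliding configurations would occur in symmetric packets and the generic branching need not be simple, so no single transposition would be produced. Confirming that this degeneration is a genuine square-root branch point, with exactly two lines colliding and the rest remaining distinct and transverse, is precisely the delicate local bookkeeping ``deep in the locus of indeterminacy'' that the versality result is designed to reduce to a finite, explicit calculation; once it is in hand, transitivity and $2$-transitivity close the argument.
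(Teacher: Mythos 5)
Your proposal follows essentially the same route as the paper: $2$-transitivity via irreducibility of the pairs incidence correspondence $I_{k,k}''$, plus a transposition obtained by merging two contact points of distinct multiplicities $m_{1}\neq m_{2}$ into a $(k-1)$-incident line and using versality to see that the resulting fiber of $I_{k}(m_{1},\dots,m_{k})\to \P^{N}$ acquires a single length-two nonreduced point there while all other $k$-incident lines stay simple (the paper's ``no unintended consequences'' lemma). The one piece of local bookkeeping you defer is exactly what the paper supplies: the locus $Z_{m_{1},m_{2}}$ of divisors $m_{1}p+m_{2}q$ in the mini-versal unfolding of $z^{m_{1}+m_{2}}$ is a unibranched curve with a multiplicity-two point at the origin precisely because $m_{1}\neq m_{2}$, and intersecting it with the transverse immersed image of $\G\left(1,r\right)$ is what yields the simple transposition.
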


 The key to answering question (3) for the moduli map $\mu_{1}$ is to analyze the {\sl tri-incident} lines of $X$ -- these are lines meeting $X$ at exactly three points.

\begin{theorem}
  \label{theorem:main2}
  Let $X \subset \P^{r}$ be a general hypersurface of degree $d=2r+1$. Then 
  \begin{align*}
    \deg \mu_{1} = 2\sum_{a \geq b >1}^{}n_{a,b,1} + 4n_{d-2,1,1}
  \end{align*}
  where $n_{i,j,k}$ is the number of tri-incident lines to $X$ with intersection multiplicities $\left( i,j,k \right)$. 
\end{theorem}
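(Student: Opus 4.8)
The plan is to make the moduli map completely explicit by passing to parametrized lines, and then to extract its degree from a local analysis at the tri-incident lines.

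First I would replace $\mu_1$ by a rational self-map of projective space. A parametrized line is a $2 \times (r+1)$ matrix $\phi$ of rank two, taken up to scalars, so the space of parametrized lines is an open subset of $\P^{2r+1}$ carrying a $\PGL_2$-action (row operations) whose quotient is $\G(1,r)$. If $F$ is the degree $d$ form cutting out $X$, then $\phi \mapsto \phi^{*}F \in |\so_{\P^{1}}(d)| = \P^{d}$ defines a $\PGL_2$-equivariant rational map $\Phi \colon \P^{2r+1} \dashrightarrow \P^{d}$, and since $d = 2r+1$ both sides have the same dimension. Because $\Phi$ is equivariant with generically trivial stabilizers and descends to $\mu_1$, and because a general configuration of $d$ points has no automorphisms, the fiber of $\Phi$ over a general $D$ maps bijectively to the fiber of $\mu_1$ over $[D]$. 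Hence $\deg \mu_1 = \deg \Phi$, and the problem becomes the computation of the degree of an explicit map given by $d+1$ forms of degree $d$.

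To compute $\deg \Phi$ I would resolve its indeterminacy and specialize the target. The base locus of $\Phi$ is $\{\phi : \phi^{*}F \equiv 0\}$; since a general $X$ of degree $2r+1$ contains no lines, this is exactly the set of rank-one matrices whose image lies on $X$, an $r$-dimensional locus, which I would blow up. I then degenerate a general target form to tri-incident forms and apply conservation of number: the preimages either limit to parametrized tri-incident lines in the interior or escape into the exceptional divisor over the base locus. The versality result (\autoref{corollary:versalkincident}) supplies an explicit local model for $\Phi$ near every tri-incident line, and \autoref{theorem:monodromymain} shows that the resulting contribution depends only on the multiplicity type, so a single representative per type suffices. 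In the local model the fat intersection points (multiplicity at least two) are rigid and do not affect the count, whereas the simple intersection points are the only source of nearby deformations; the bookkeeping should produce a contribution of $2$ for each simple point. Thus a line of type $(a,b,1)$ with $a \ge b > 1$, having one simple point, contributes $2$; a line of type $(d-2,1,1)$, having two simple points, contributes $4$; and a line all of whose multiplicities exceed one has no simple point and does not arise among the relevant limits, so it is absent from the formula — precisely the shape of the claimed expression.

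The principal difficulty is the analysis deep inside the indeterminacy locus. A type $(d-2,1,1)$ line is genuinely GIT-unstable, since its largest multiplicity $d-2 = 2r-1$ exceeds $d/2$, so it sits inside the base locus of $\Phi$ and its weight of $4$ can only be extracted after a careful resolution there. The companion difficulty is to account for the escaping limits: I must show that, once the base locus is resolved, no interior contributions survive beyond the honest tri-incident lines of the listed types, and in particular that higher-incidence or further-degenerate limits contribute nothing to the excess. Pinning down the precise local multiplicities together with this vanishing is the technical heart of the argument.
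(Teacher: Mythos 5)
Your reduction of $\deg\mu_{1}$ to the degree of the explicit map $\Phi:\P^{2r+1}\dashrightarrow\P^{d}$ on parametrized lines is fine, but the degeneration step that is supposed to produce the coefficients does not work, and the claimed local multiplicities are wrong. By \autoref{corollary:versalkincident} together with the independence of the $\PGL_{2}$-directions from the versal directions (this is exactly \autoref{proposition:localisomorphism}), the differential of $\Phi$ at a parametrized tri-incident line is an \emph{isomorphism}: the Grassmannian directions surject onto $W_{a}\times W_{b}\times W_{c}$ and the $\PGL_{2}$ directions onto the three translation directions, and $(a-1)+(b-1)+(c-1)+3=d$. So $\Phi$ is \'etale there, and the local degree is $1$, not ``$2$ per simple point.'' Consequently, if you specialize the target to a divisor $D_{0}$ of type $(a_{0},b_{0},1)$, the interior part of the fiber is only $n_{a_{0},b_{0},1}\cdot|\mathrm{Aut}(D_{0})|$ reduced points; \emph{all the rest of the degree} --- including every term of the formula coming from the other multiplicity types --- escapes into the exceptional locus over the base scheme $\{\phi^{*}F\equiv 0\}$. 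Your hoped-for statement that the escaping limits ``contribute nothing'' is therefore not a technical loose end but is false: the escaping part is most of the degree, because a non-reduced divisor on $\P^{1}$ forgets the relative positions of the colliding points and the fiber over it in $\P^{d}$ cannot be organized by tri-incident lines. (A separate slip: a line of type $(d-2,1,1)$ is \emph{not} in the base locus of $\Phi$ --- the base locus is the rank-one locus over $X$ --- you have conflated GIT-instability of the image divisor, which obstructs $\mu_{1}$ into $\P^{d}/\!\!/\PGL_{2}$, with indeterminacy of $\Phi$ itself.)

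The fix, which is what the paper does, is to choose a target compactification that does not forget the colliding points: pass to the ordered correspondence $\Sigma^{[d]}$ and the induced map to $\barm_{0,d}$, take the graph closure $Z$ with projections $\alpha,\beta$, and specialize the target to a \emph{straight tree} rather than to a non-reduced binary form. \autoref{proposition:limitedorigin} then shows that every preimage of a straight tree lies over an honest $3$-incident line of type $(1,a,b)$ (nothing escapes, since $X$ has no $2$-incident lines), and \autoref{corollary:mainresolutioncorollary} shows $\alpha$ is unramified at all such points, so the fiber over a straight tree computes $\deg\alpha=\deg\mu_{1}$ exactly. The coefficients $2$ and $4$ are then pure combinatorics of the ordered model: there are $d!/8$ straight trees in all, a line of type $(a,b,1)$ with $a,b>1$ supports $\frac{d!}{a!\,b!}\cdot\frac{a!\,b!}{4}=d!/4$ of them, and a line of type $(d-2,1,1)$ supports $d!/2$, giving $2$ and $4$ per line respectively. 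These factors have nothing to do with local intersection multiplicities at simple points, so the pattern you matched ($2$ per simple point) is a coincidence of this case rather than an argument; in particular your claim that types with all multiplicities $\geq 2$ ``do not arise'' is a feature of the paper's specific choice of boundary point, not something your degeneration would see.
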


\begin{remark}
  \label{remark:periodmap}
  \autoref{theorem:main}, along with Donagi's theorem on the generic injectivitiy of the period map $\Hyp(d,m) \dashrightarrow {\mathcal D}/\Gamma$, implies that the composite period map $\pi_{m}: \G\left( m,r \right) \dashrightarrow {\mathcal D}/\Gamma$ is injective for a generic degree $d$ hypersurface $X$ with $d > 2r+1$.  
\end{remark}

\begin{remark}
  \label{remark:notexhausted}
  \autoref{theorem:main} does not exhaust all cases where $\dm \G\left( m,r \right) < \dm \Hyp(d,m)$. We also remark that a similar argument as found in the proof of \autoref{theorem:main} shows that knowledge of generic injectivity for $m$-planes implies the same for larger planes. 

\end{remark}

\begin{remark}
  \label{remark:genericityassumption}
  Clearly, some sort of genericity condition on $X$ is required for an affirmative answer to question (2): Consider $X$ a Fermat hypersurface. 

  Interestingly, the example of the Fermat quintic in $\P^{2}$ shows that a genericity assumption stronger than ``smooth'' is required for a uniform answer to question (3).  In \cite{cadmanlaza} , the degree of $\mu_{1}$ for a general quintic is $420$ (in agreement with \autoref{theorem:main2}), while for the Fermat quintic the degree drops to $150$. 
\end{remark}

\subsection{Acknowledgements} I thank Francesco Cavazzani, Anand Deopurkar, Igor Dolgachev, Joe Harris, and Claire Voisin for useful conversations.

\subsection{Conventions} We work over an algebraically closed field of characteristic zero. If $F$ is a sheaf, we let $\Gamma (F)$ denote the space of global sections of $F$.

\section{Versality}

\subsection{Local models}
Throughout this section  $X \subset \P^{r}$ will  be a smooth hypersurface not containing any lines.

\begin{definition}
  \label{definition:tautological}
  We let $\pi : \P \to \G\left( 1,r \right)$ denote the tautological $\P^{1}$-bundle.
\end{definition}

\begin{definition}
  \label{definition:incidencecorrespondence}
  The {\sl point-line incidence correspondence for $X$} is the variety $\Sigma \subset \P$ defined by 
  \begin{align*}
    \Sigma := \left\{ (x, [\ell]) \in X \times \G\left( 1,r \right) \mid x \in \ell \right\}
  \end{align*}
  We let $\varphi, \pi$ be the projections of $\Sigma$ to the first and second factors. 
\end{definition}

\begin{lemma}
  \label{lemma:sigmasmooth}
  $\Sigma$ is a smooth variety.
\end{lemma}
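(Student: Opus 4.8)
The plan is to study $\Sigma$ through the projection $\varphi : \Sigma \to X$ rather than through $\pi : \Sigma \to \G(1,r)$. The map $\pi$ is the awkward one: its fibers jump in dimension or fail to be reduced as soon as a line becomes tangent to $X$ (and would become positive-dimensional if $X$ contained a line), so it is poorly suited to a direct smoothness argument. By contrast, $\varphi$ exhibits $\Sigma$ as a fiber bundle over the smooth base $X$, and from this smoothness is immediate.

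Concretely, I would first recall that the ambient incidence variety $\P = \{(x,[\ell]) \in \P^r \times \G(1,r) \mid x \in \ell\}$ is a $\P^{r-1}$-bundle over $\P^r$ via the first projection $q : \P \to \P^r$: the fiber over $x$ is the set of lines through $x$, which is canonically the projective space of $1$-dimensional subspaces of $V/\hat{x}$, where $\P^r = \P(V)$ and $\hat{x} \subset V$ is the line corresponding to $x$. In bundle language $\P$ is the projectivization $\P(Q)$ of the rank-$r$ universal quotient bundle $Q$ defined by $0 \to \so_{\P^r}(-1) \to V \otimes \so_{\P^r} \to Q \to 0$, whose fiber over $x$ is $V/\hat{x}$. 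In particular $q$ is a smooth surjective morphism and $\P$ is smooth.

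Now $\Sigma$ is precisely the preimage $q^{-1}(X) = \P \times_{\P^r} X$, equivalently $\Sigma \cong \P(Q|_X)$. Since $X$ is smooth and $Q|_X$ is locally free on $X$, the projectivization $\P(Q|_X)$ is smooth; equivalently $\varphi : \Sigma \to X$ is the base change of the smooth morphism $q$ along $X \hookrightarrow \P^r$, hence smooth, and its base $X$ is smooth, so the total space $\Sigma$ is smooth. This finishes the argument. The only genuine point — and the reason the statement is worth isolating — is the choice of projection: the map $\pi$ one ultimately wants to control is not the one that exposes smoothness, whereas $\varphi$ makes it a one-line consequence of the fact that a projective bundle over a smooth variety is smooth. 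Note that the running hypothesis that $X$ contains no lines is not needed here; it enters only in the later finiteness and versality statements.
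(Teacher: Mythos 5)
Your proof is correct and takes essentially the same approach as the paper, which simply observes that $\varphi : \Sigma \to X$ is a projective bundle with fibers $\P^{r-1}$ over the smooth base $X$. Your version just fills in the details (identifying $\Sigma$ with $\P(Q|_X)$) that the paper leaves implicit.
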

\begin{proof}
  The projection $\varphi : \Sigma \to X$ is a projective bundle, with fibers isomorphic to $\P^{r-1}$.  
\end{proof}

Clearly, since $X$ is assumed to contain no lines, the map $\pi : \Sigma \to \G\left( 1,r \right)$ is a finite branched cover of degree $d$.

\begin{definition}
  \label{definition:Wn}
  We let $W_{n}$, $(n > 1)$, denote the pair $(Y_{n},\P^{1} \times \A^{n-1} \to (\A^{n-1},0))$, with $Y_{n} \subset \P^{1} \times \A^{n-1}$  defined by the equation 
  \begin{align*}
    s^{n} + a_{n-2}s^{n-2}t^{2} + a_{n-3}s^{n-3}t^{3} + \dots + a_{0}t^{n} = 0,
  \end{align*}
  where $a_{i}$ are coordinates on $\A^{n-1}$, and $[s:t]$ are homogeneous coordinates on $\P^{1}$. 
\end{definition}

\begin{remark}
  \label{remark:dehomogenize}
  To simplify notation, we will often write the affine equation $$z^{n} + a_{n-2}z^{n-2} + \dots a_{0}= 0$$ for the equation defining $W_{n}$. $W_{n}$ is often referred to as the ``mini-versal unfolding'' of the singularity $z^{n} = 0$.
\end{remark}

\begin{proposition}
  \label{proposition:versality}
  Let $B$ be a smooth variety, $b \in B$ a marked point.  Suppose $Z \subset \P^{1} \times B$ is a divisor such that $Z_{b} \subset \P^{1} \times \left\{ b \right\}$ is a scheme isomorphic to $s^{n} = 0$. Then there is an \'etale neighborhood $U$ of $b$ and a map $\mu : \left( U,b \right) \to \left( \A^{n-1},0 \right)$ such that $Z_{U} \subset \P^{1}\times U$ is isomorphic to the pullback of $W_{n}$ under $\mu$. 
\end{proposition}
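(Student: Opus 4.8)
The plan is to put the family $Z$ into an explicit monic-polynomial normal form near the point where $Z_b$ is supported, then eliminate the subleading coefficient by a fiberwise translation, at which point the map $\mu$ can simply be read off from the remaining coefficients.

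First I would localize near the support of $Z_b$. Since $Z_b \cong \{s^n = 0\}$ is concentrated at the single point $p_0 = [0:1] \in \P^1$, the projection $Z \to B$ is quasi-finite at $(p_0, b)$; after replacing $B$ by an \'etale neighborhood $U$ of $b$ I may separate off the connected component of $Z$ through $(p_0,b)$ and assume $Z_U \to U$ is finite, indeed finite flat of degree $n$, since $Z$ is a relative Cartier divisor whose fiber over $b$ has length $n$. Working in the affine chart $z = s/t$ (so that $p_0 = \{z = 0\}$), the divisor $Z_U$ is cut out by a single function $f(z,u)$ with $f(z,b)$ equal to $z^n$ up to a unit. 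By the (Henselian, hence \'etale-local) Weierstrass preparation theorem --- or, equivalently, by taking the characteristic polynomial of multiplication by $z$ on the rank-$n$ locally free pushforward of $\so_{Z_U}$ to $U$ --- after shrinking $U$ I may write $Z_U = \{P(z,u) = 0\}$ with
\[
  P(z,u) = z^n + c_{n-1}(u)z^{n-1} + \dots + c_0(u),
\]
a monic degree-$n$ polynomial in $z$ whose coefficients satisfy $c_i(b) = 0$, since $P(z,b) = z^n$.

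Next I would \emph{depress} the polynomial. Because the characteristic is zero, the fiberwise Tschirnhaus translation $z \mapsto z - c_{n-1}(u)/n$ --- which extends to the automorphism $[s:t] \mapsto [\,s - (c_{n-1}(u)/n)\,t : t\,]$ of $\P^1$ fixing $[1:0]$, and hence to an automorphism of $\P^1 \times U$ over $U$ --- carries $Z_U$ isomorphically onto the vanishing locus of
\[
  z^n + a_{n-2}(u)z^{n-2} + \dots + a_0(u),
\]
with the degree-$(n-1)$ term removed and with $a_i(b) = 0$. Defining $\mu : (U,b) \to (\A^{n-1},0)$ by $\mu(u) = (a_0(u), \dots, a_{n-2}(u))$, the displayed equation is exactly the pullback under $\mu$ of the equation defining $W_n$, so this automorphism is the desired isomorphism $Z_U \cong \mu^{*}W_n$.

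The only genuinely nontrivial step is the Weierstrass factorization; everything afterward is the elementary linear algebra of depressing a polynomial. The main point to take care with is that both the preparation theorem and the separation of the component through $p_0$ are available only \'etale-locally (not Zariski-locally) on $B$, which is precisely why the statement is phrased in terms of an \'etale neighborhood. The smoothness of $B$ is what guarantees that we are localizing inside a regular, hence Henselizable, local ring, so that the factorization applies.
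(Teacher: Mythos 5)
Your argument is correct and follows essentially the same route as the paper: normalize $Z$ near $b$ to the vanishing locus of a monic degree-$n$ polynomial with coefficients vanishing at $b$, kill the subleading term by the fiberwise translation (the paper's ``completing the $n$-th power''), and read off $\mu$ from the remaining coefficients. The only difference is that you justify the monic normal form via Weierstrass preparation and carefully isolate the component of $Z$ over a neighborhood of $b$, steps the paper's proof passes over silently.
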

\begin{proof}
  Since $B$ is smooth, it is \'etale locally isomorphic to $\A^{m}.$ So we may reduce to the case $\left( B,b \right) = \left( \A^{m},0 \right)$.

  After passing to an open set, the set $Z$ is defined by a single equation $$s^{n} + u_{n-1}s^{n-1}t + \dots + u_{0}t^{n}=0$$ where $u_{i} \in \fm_{0} \subset \Gamma (O_{\A^{m}})$. (Here $\fm_{0}$ is the maximal ideal of the origin.) We may ``complete the $n$-th power'' and find an isomorphic $Z' \subset \P^{1} \times \A^{m}$ defined by a similar equation, but with vanishing $u_{n-1}$ term. 

  The $n-1$ functions $u_{i} \in \fm_{0} \subset \Gamma \left( O_{\A^{m}} \right), i = 0, \dots n-2$ define a map $\left( \A^{m}, 0 \right) \to \left( \A^{n-1},0 \right)$ with the desired property.  
\end{proof}

\begin{proposition}
  \label{proposition:versalitymultiple}
  Let $(B.b)$ be as in the previous proposition, but now suppose $Z \subset \P^{1} \times B$ is a divisor whose restriction to $Z_{b} \subset \P^{1} \times \left\{ b \right\}$ is of the form $n_{1}p_{1} + \dots + n_{i}p_{i}$ with each $n_{j} \geq 1$.  

  Then there exists an \'etale open neighborhood $U$ of $b$ such that $Z_{U}$ is a disjoint union $ \coprod_{j}Z_{j} $, and furthermore there exists a map $(U,b) \to \left( \prod_{j}\A^{n_{j}-1},0 \right)$ such that $Z_{j}$ is isomorphic to the pullback of $W_{n_{j}}$ under the composite $(U,b) \to \left(\prod_{j}\A^{n_{j}-1},0 \right) \to \left(\A^{n_{j}-1},0\right)$.
\end{proposition}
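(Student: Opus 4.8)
The plan is to reduce the multiple-point statement to the single-point \autoref{proposition:versality} by first \emph{separating} the divisor $Z$ into one piece over each of the points $p_1,\dots,p_i$, and then running the previous proposition on each piece independently.

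First I would record that, near $b$, the projection $q : Z \to B$ is finite and flat. Since $Z_b = n_1 p_1 + \dots + n_i p_i$ is a finite subscheme of the fiber $\P^1 \times \{b\}$, the divisor $Z$ does not contain that fiber. Choosing homogeneous coordinates so that $[1:0]$ is none of the $p_j$ and shrinking $B$, we may arrange that $Z$ is disjoint from $\{[1:0]\} \times B$ and is cut out on $\A^1 \times B$ by a monic polynomial $f(z) = z^{d} + c_{d-1}z^{d-1} + \dots + c_{0}$ with $c_{\bullet} \in O_B$ and $d = \sum_j n_j$. Hence $q : Z \to B$ is finite flat of degree $d$.

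The separation step is where the real content lies, and is the point I expect to need the most care. The points $p_1, \dots, p_i$ cannot be separated by disjoint Zariski opens of the irreducible scheme $\P^1$, so instead I would exploit the henselian local structure of the finite morphism $q$. Because $q$ is finite and $B$ is smooth, replacing $B$ by a suitable \'etale neighborhood $U'$ of $b$ allows one to lift the idempotents of the Artinian algebra $\Gamma(O_{Z_b}) = \prod_j O_{Z_b, p_j}$ to $\Gamma(O_{Z_{U'}})$, producing a decomposition $Z_{U'} = \coprod_j Z_j$ into open-and-closed subschemes with $Z_j \to U'$ finite flat and $(Z_j)_b = n_j p_j$. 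By flatness the degree of $Z_j$ over $U'$ equals the length $n_j$ of its central fiber. Since each $p_j$ lies in some affine chart $\A^1 \subset \P^1$, after further shrinking $U'$ (again using that finite morphisms are closed, so the locus where $Z_j$ leaves the chart is closed in $U'$ and avoids $b$) each $Z_j$ is supported in $\A^1 \times U'$ and is therefore a Cartier divisor in $\P^1 \times U'$ whose restriction over $b$ is the scheme $(z^{n_j} = 0)$, isomorphic to $(s^{n_j} = 0)$.

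Finally I would apply versality pointwise. For each $j$, \autoref{proposition:versality} applied to the divisor $Z_j \subset \P^1 \times U'$ (with $U'$ smooth) yields, after passing to a further \'etale neighborhood $U_j \to U'$ of $b$, a map $\mu_j : (U_j, b) \to (\A^{n_j - 1}, 0)$ with $(Z_j)_{U_j}$ isomorphic to the pullback of $W_{n_j}$ under $\mu_j$. Setting $U := U_1 \times_{U'} \cdots \times_{U'} U_i$, an \'etale neighborhood of $b$, and $\mu := (\mu_1, \dots, \mu_i) : (U,b) \to (\prod_j \A^{n_j - 1}, 0)$, the composite of $\mu$ with the projection onto the $j$-th factor is exactly $\mu_j$, so $(Z_j)_U$ is the pullback of $W_{n_j}$ under that composite, which is the desired conclusion. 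Everything outside the separation step is the one-point proposition together with routine bookkeeping of \'etale neighborhoods.
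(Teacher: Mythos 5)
Your proof is correct and is essentially the argument the paper intends: the paper's own proof is only the remark that it is ``completely similar'' to \autoref{proposition:versality}, the implicit content being exactly the \'etale-local separation of $Z$ into pieces concentrated at the $p_{j}$ followed by an application of the one-point case to each piece. Your filling-in of the separation step via finiteness/flatness of $Z \to B$ and lifting of idempotents over an \'etale neighborhood is the standard and correct way to make that step precise, and the remaining bookkeeping (each $Z_{j}$ being open and closed in the Cartier divisor $Z_{U'}$, hence itself Cartier, and the fiber product of the $U_{j}$) is handled properly.
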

\begin{proof}
  The argument is completely similar to the proof of \autoref{proposition:versality}.
\end{proof}

\begin{remark}
  \label{remark:nonunique}
  The maps to $W_{n}$ in these propositions are not unique, however it is easy to show that they induce a unique map on the tangent space of $B$ at $b$.
\end{remark}

\begin{definition}
  \label{definition:versal}
  Let $(B,b)$ be a smooth variety with marked point, and let $Z \subset \P^{1} \times B$ be a closed subset such that $Z_{b}$ is as in \autoref{proposition:versalitymultiple}. Then $Z \subset \P^{1} \times B \to B$ is {\sl versal at $b$} if any (and hence all) of the maps $(U,b) \to \left(\prod_{j}\A^{n_{j}-1},0 \right)$ is a local submersion, i.e. the map on tangent spaces is surjective.

\end{definition}

\begin{definition}
  \label{definition:mini-versal}
  Let $(B,b)$ be a smooth variety with marked point, and let $Z \subset \P^{1} \times B$ be a closed subset such that $Z_{b}$ is as in \autoref{proposition:versalitymultiple}. Then $Z \subset \P^{1} \times B$ is {\sl mini-versal at $b$} if any (and hence all) of the maps $(U,b) \to \left(\prod_{j}\A^{n_{j}-1},0 \right)$ is a local isomorphism.
\end{definition}

\subsection{Lines meeting $X$ infrequently}

\begin{notation} \label{notation1}
Let $\sum_{i}^{k}m_{i} = d$, $\sum_{i=1}^{j}n_{i}=d$, and let $\P^{N}$ denote the projective space of degree $d$ hypersurfaces in $\P^{r}$. We define the following three types of incidence correspondences: 
\begin{enumerate}
  \item \begin{align*}
      I_{k} & \left( m_{1}, \dots, m_{k} \right) =\\ 
      & \overline{\left\{ (X,\ell) \mid X \cap \ell = \sum_{i}m_{i}p_{i}, \, \, \text{for distinct $p_{i} \in \ell$} \right\}} \subset \P^{N} \times \G\left( 1,r \right)
\end{align*}
\item  \begin{align*}
   I'_{k} &  \left( m_{1}, \dots, m_{k} \right) =\\ 
   & \overline{\left\{ (X,\ell,p_{1}, \dots, p_{k}) \mid X \cap \ell = \sum_{i}m_{i}p_{i} \right\}} \subset \P^{N} \times \P \times_{\G\left( 1,r \right)} \dots \times_{\G\left( 1,r \right)}\P 
 \end{align*}
 \item \begin{align*}
   &  I_{k,j}''  \left( m_{1},\dots, m_{k} \mid n_{1},  \dots, n_{j}\right) =\\
   &  \overline{\left\{ (X, \ell, p_{1}, \dots, p_{k}, \ell', q_{1}, \dots, q_{j}) \mid X\cap \ell = \sum_{i}m_{i}p_{i}, \, \, \,  X \cap \ell' = \sum_{i}n_{i}q_{i} \right\}}
   \end{align*}

\end{enumerate}

 The closures are taken of the sets where the objects parametrized are all distinct (for points) and do not intersect (for lines).

\end{notation}

\subsubsection{Lemmas on correspondences}

\begin{lemma}
  \label{lemma:Ikirreducible}
  $I_{k}\left( m_{1}, \dots, m_{k} \right)$ is irreducible for all $k$.
\end{lemma}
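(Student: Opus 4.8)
The plan is to prove irreducibility not of $I_k$ itself but of the auxiliary correspondence $I'_k(m_1,\dots,m_k)$, which additionally records the points $p_1,\dots,p_k$, and then to transfer the conclusion along the forgetful projection $I'_k \to I_k$. This projection is dominant (over the locus of distinct points, every $(X,\ell)$ has a preimage, obtained by ordering the $p_i$ compatibly with their multiplicities), so $I_k$ is the closure of the image of $I'_k$; hence once $I'_k$ is shown irreducible, $I_k$ is irreducible as well. Since $I'_k$ is by definition the closure of its open locus $I'^{\circ}_k$ on which the $p_i$ are genuinely distinct, it is enough to prove that $I'^{\circ}_k$ is irreducible, and the argument below is uniform in $k$ and in the multiplicity vector.

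First I would project $I'^{\circ}_k$ onto the configuration data. Let $\Phi := \P \times_{\G(1,r)} \cdots \times_{\G(1,r)} \P$ be the $k$-fold fiber product of the tautological $\P^1$-bundle (this is exactly the target already used to define $I'_k$), parametrizing a line $\ell$ with an ordered $k$-tuple of points on it, and let $\Phi^{\circ} \subset \Phi$ be the open locus where the points are distinct. Then $\Phi^{\circ}$ is irreducible: it maps to the irreducible Grassmannian $\G(1,r)$ with fibers $(\P^1)^k \setminus \Delta$, themselves irreducible. The projection $\rho : I'^{\circ}_k \to \Phi^{\circ}$ is the map I want to understand, and the key claim is that it is (an open subvariety of) a projective bundle with fibers $\P^{N-d}$.

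To establish this, let $j : \P \to \P^r$ be the universal line and set $E := \pi_{*}\, j^{*}\mathcal{O}_{\P^r}(d)$. Because $j^{*}\mathcal{O}_{\P^r}(d)$ restricts to $\mathcal{O}_{\P^1}(d)$ on every fiber of $\pi$ and $H^1(\P^1,\mathcal{O}(d)) = 0$, cohomology and base change makes $E$ locally free of rank $d+1$, and restriction of forms gives a surjection of bundles $H^0(\P^r,\mathcal{O}(d)) \otimes \mathcal{O}_{\G(1,r)} \twoheadrightarrow E$, with kernel $K$ locally free of rank $N-d$. Over a point $(\ell,p_1,\dots,p_k) \in \Phi^{\circ}$ the condition $X \cap \ell = \sum_i m_i p_i$ says precisely that the restriction $X|_{\ell}$ is a nonzero scalar multiple of $\prod_i \sigma_i^{m_i}$, where $\sigma_i$ vanishes at $p_i$; these forms vary algebraically in $(\ell,p_\bullet)$ and define a line subbundle $L \subset E|_{\Phi^{\circ}}$. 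Pulling back the surjection and taking the preimage of $L$ yields a subbundle $\widetilde{L} \subset H^0(\P^r,\mathcal{O}(d)) \otimes \mathcal{O}_{\Phi^{\circ}}$ of rank $N-d+1$, and $I'^{\circ}_k$ is exactly $\P(\widetilde{L}) \setminus \P(K)$, the complement in the $\P^{N-d}$-bundle $\P(\widetilde{L})$ of the proper closed subbundle $\P(K)$ where $X|_{\ell}=0$ (i.e.\ where $\ell \subset X$).

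Since $\P(\widetilde{L})$ is the projectivization of a vector bundle over the irreducible base $\Phi^{\circ}$, it is irreducible, and $I'^{\circ}_k$ is a dense open subset of it, hence irreducible; as explained above this gives the irreducibility of $I_k$. The step demanding the most care is the bundle description of $\rho$: one must check that the incidence condition cuts out exactly a linear fiber of the \emph{constant} dimension $N-d$, which relies on the fiberwise surjectivity of the restriction map and on the local freeness of $E$, and one must be disciplined about running the whole argument on the open loci $\Phi^{\circ}, I'^{\circ}_k$ where the points are distinct, passing to closures only at the very end. Everything else is the formal principle that a projective bundle over an irreducible base is irreducible.
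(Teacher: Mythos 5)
Your proof is correct and follows essentially the same route as the paper: both show that the projection of $I'_k(m_1,\dots,m_k)$ to the configuration data $(\ell,p_1,\dots,p_k)$ is a projective-space bundle (over the irreducible fiber product of copies of $\P$), hence $I'_k$ is irreducible, and then transfer irreducibility to $I_k$ along the dominant forgetful map. Your write-up merely supplies the cohomology-and-base-change details and is careful about removing the locus $\ell \subset X$, which the paper leaves implicit.
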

\begin{proof}
  Consider the correspondence $I_{k}'\left( m_{1}, \dots, m_{k} \right)$.  The projection to the data $(\ell, p_{1}, \dots, p_{k})$ makes $I_{k}'$ into a projective space bundle over the parameter space of tuples $\left( \ell, p_{1}, \dots, p_{k} \right)$, and hence is irreducible.  
 $I_{k}'\left( m_{1}, \dots, m_{k} \right)$ dominates $I_{k}\left( m_{1}, \dots, m_{k} \right)$ by forgetting the data of the marked points $(p_{1}, \dots, p_{k})$. The lemma follows.
\end{proof}

\begin{lemma}
  \label{lemma:2irreducible}
  The varieties $I_{k,j}''\left( m_{1}, \dots, m_{k} \mid n_{1},\dots,n_{j} \right)$ are irreducible for all $k, j$, and all multiplicities $(m_{1}, \dots, m_{k}), ( n_{1}, \dots, n_{j} )$.
\end{lemma}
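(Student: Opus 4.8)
The plan is to imitate the proof of \autoref{lemma:Ikirreducible}: I would exhibit the dense open part of $I_{k,j}''\left( m_{1}, \dots, m_{k} \mid n_{1}, \dots, n_{j} \right)$ as a projective bundle over an irreducible base, namely the configuration space of the two lines together with their marked points. Concretely, let $B$ denote the parameter space of tuples $(\ell, p_{1}, \dots, p_{k}, \ell', q_{1}, \dots, q_{j})$ in which $\ell, \ell'$ are disjoint lines, the $p_{i} \in \ell$ are distinct, and the $q_{i} \in \ell'$ are distinct. Forgetting the hypersurface $X$ gives a projection $\rho : I_{k,j}'' \dashrightarrow B$, and I would show first that $B$ is irreducible and then that the fibers of $\rho$ are dense open subsets of a fixed-dimensional projective space.

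For the irreducibility of $B$ I would build it as a tower of fiber bundles. The space of ordered pairs of disjoint lines $(\ell, \ell')$ is an open subset of $\G\left( 1,r \right) \times \G\left( 1,r \right)$, hence irreducible (nonempty precisely when $r \geq 3$, where disjoint lines exist). Over this base, choosing the marked points amounts to choosing $k$ distinct ordered points on $\ell \cong \P^{1}$ and $j$ distinct ordered points on $\ell' \cong \P^{1}$, using the tautological $\P^{1}$-bundle of \autoref{definition:tautological}; the resulting fibers are dense open subsets of $(\P^{1})^{k} \times (\P^{1})^{j}$, irreducible and of constant dimension. Since a fiber bundle with irreducible equidimensional fibers over an irreducible base is irreducible, $B$ is irreducible.

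Now fix a configuration $b = (\ell, p_{\bullet}, \ell', q_{\bullet}) \in B$. Because $\sum_{i} m_{i} = d$, a form $X$ satisfies $X \cap \ell = \sum_{i} m_{i} p_{i}$ exactly when its restriction $X|_{\ell} \in \Gamma(\so_{\ell}(d))$ is a nonzero multiple of the degree $d$ form on $\ell$ with divisor $\sum_{i} m_{i} p_{i}$, i.e. when $X|_{\ell}$ lies on a fixed line $L_{b} \subset \Gamma(\so_{\ell}(d))$; similarly $X|_{\ell'} \in L'_{b}$. These are linear conditions on $X$, so on the open locus where $X|_{\ell}$ and $X|_{\ell'}$ are both nonzero (which is what forces the two intersections to be \emph{exactly} the prescribed divisors) the fiber of $\rho$ over $b$ is a dense open subset of a projective space $\P(V_{b})$, where $V_{b} \subset \Gamma(\so_{\P^{r}}(d))$ is a linear subspace.

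To finish via the projective bundle argument I need $\dim V_{b}$ to be independent of $b$, and this is the only genuine obstacle. The point — and the reason \autoref{notation1} takes the closure over \emph{disjoint} lines — is that the conditions along $\ell$ and along $\ell'$ are independent. This follows from surjectivity of the restriction map $\Gamma(\so_{\P^{r}}(d)) \to \Gamma(\so_{\ell}(d)) \oplus \Gamma(\so_{\ell'}(d))$, which holds for all $d \geq 0$ when $\ell \cap \ell' = \emptyset$: choosing coordinates with $\ell = \{x_{2} = \dots = x_{r} = 0\}$ and $\ell' = \{x_{0} = x_{1} = x_{4} = \dots = x_{r} = 0\}$, the form $g(x_{0},x_{1}) + h(x_{2},x_{3})$ restricts to the prescribed pair $(g,h)$. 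Hence $V_{b}$ is cut out by exactly $2d$ independent linear conditions, so $\dim V_{b} = \dim \Gamma(\so_{\P^{r}}(d)) - 2d$ is constant in $b$. The spaces $V_{b}$ therefore form a vector subbundle $\mathcal V$ of the trivial bundle with fiber $\Gamma(\so_{\P^{r}}(d))$ over $B$, and the open part of $I_{k,j}''$ is dense in the projective bundle $\P(\mathcal V) \to B$. Being a projective bundle over an irreducible base, it is irreducible, and $I_{k,j}''$ is its closure, which proves the lemma. I expect the constancy of the fiber dimension (equivalently, the independence of the conditions, where disjointness is used) to be the only step requiring care; the rest is a routine bundle argument.
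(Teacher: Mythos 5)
Your proposal is correct and follows essentially the same route as the paper: the paper's proof also fibers $I''_{k,j}$ over the configuration of $(\ell,p_\bullet,\ell',q_\bullet)$ and observes that the fibers are linear spaces of constant dimension because the restriction map $\Gamma(\so_{\P^{r}}(d)) \to \Gamma(\so_{\ell \cup \ell'}(d))$ is surjective for disjoint lines. You have simply spelled out the details (irreducibility of the base, the codimension-$2d$ count, and the explicit splitting $g(x_{0},x_{1})+h(x_{2},x_{3})$ witnessing surjectivity) that the paper leaves to the reader.
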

\begin{proof}
  The general fiber of the projection 
  \begin{align*}
    (X ,\ell,  p_{1}, \dots, p_{k}, \ell', q_{1}, \dots, q_{j}) \mapsto (\ell, p_{1}, \dots p_{k}, \ell', q_{1}, \dots, q_{j})
  \end{align*}
  is a linear space in $\P^{N}$ with dimension independent of the pair of non-intersecting lines $\ell, \ell'$. This follows because the restriction map 
  \begin{align*}
    \Gamma (O_{\P^{r}}(d)) \to \Gamma (O_{\ell \cup \ell'}(d)) 
  \end{align*}
  is surjective. The lemma follows at once.
\end{proof}

\begin{definition}
  \label{definition:k-incident}
  Let $X \subset \P^{r}$ be a hypersuface.  A line $\ell$ is {\sl $k$-incident} to $X$  if the set-theoretic intersection $X\cap \ell$ consists of $k$  points.
\end{definition}

\begin{proposition}
  \label{proposition:finitek-incident}
  Assume $k =d -  (2r-2)$ is positive. Then a general hypersurface $X \subset \P^{r}$ of degree $d$  has finitely many $k$-incident lines, and has no $k'$-incident lines for $k' < k$.
\end{proposition}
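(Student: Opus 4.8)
The plan is to reduce the statement to an exact dimension count on the incidence correspondences $I_k(m_1,\dots,m_k)$ and $I_k'(m_1,\dots,m_k)$, followed by projection to the parameter space $\P^{N}$ of degree $d$ hypersurfaces. A $k$-incident (resp. $k'$-incident) line carries a multiplicity vector $(m_1,\dots,m_k)$ with $\sum_i m_i = d$ and all $m_i \ge 1$, and there are only finitely many such compositions of $d$. Thus it suffices to analyze one multiplicity vector at a time and then take a finite union over vectors and over values of $k'$.

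First I would compute $\dim I_k(m_1,\dots,m_k)$. By \autoref{lemma:Ikirreducible} this variety is irreducible and is dominated by $I_k'(m_1,\dots,m_k)$, which is realized as a projective bundle over the space of tuples $(\ell,p_1,\dots,p_k)$ with $p_i \in \ell$ distinct. That base has dimension $\dim \G(1,r) + k = (2r-2)+k$. Over a fixed tuple the fiber is the projectivization of the space of forms $F$ whose restriction $F|_\ell$ is a scalar multiple of the fixed divisor $\sum_i m_i p_i$; since the restriction map $\Gamma(O_{\P^{r}}(d)) \to \Gamma(O_\ell(d))$ is surjective onto a $(d+1)$-dimensional space (as in \autoref{lemma:2irreducible}), this fiber has dimension $N-d$. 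Hence $\dim I_k'(m_1,\dots,m_k) = (2r-2)+k+(N-d)$, and because the forgetful map $I_k' \to I_k$ is generically finite, $\dim I_k(m_1,\dots,m_k) = N + k + (2r-2) - d$.

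Substituting the critical value $k = d-(2r-2)$ gives $\dim I_k(m_1,\dots,m_k) = N$ exactly, while for any $k' < k$ and any multiplicity vector one gets $\dim I_{k'} = N + k' + (2r-2) - d < N$. Now project to $\P^{N}$. For each $k' < k$ the image is a proper closed subvariety of $\P^{N}$, and since there are finitely many choices of $k'$ and of multiplicity vector, their union is still proper; a general $X$ therefore lies off every such image and hence has no $k'$-incident lines. For the critical value $k$, the projection $I_k(m_1,\dots,m_k) \to \P^{N}$ has irreducible $N$-dimensional source: if it is dominant the generic fiber is $0$-dimensional, and if it is not dominant the generic $X$ lies off its image. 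In either case a general $X$ carries only finitely many $k$-incident lines with the given multiplicities, and a finite union over multiplicity vectors yields finiteness overall.

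I expect the only delicate point to be not finiteness itself—which follows from the count above regardless of dominance—but the \emph{sharpness} of the threshold, i.e. that at $k = d-(2r-2)$ the generic fiber is genuinely nonempty (the projection is dominant), so that such lines actually exist. This is exactly where the local theory of this section enters: one exhibits a single hypersurface possessing an isolated, reduced $k$-incident line, and \autoref{proposition:versalitymultiple} together with the versality of $\Sigma \to \G(1,r)$ in the sense of \autoref{definition:versal} forces the locus where the line remains $k$-incident to have codimension $\sum_i (m_i-1) = d-k = 2r-2$ in $\G(1,r)$, hence to be a single reduced point. This simultaneously certifies isolation and confirms that the dimension count is exact.
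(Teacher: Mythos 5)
Your dimension count is correct and is precisely the ``standard'' argument the paper leaves to the reader: $\dim I_k(m_1,\dots,m_k) = N + k + (2r-2) - d$, which equals $N$ at the critical value $k = d-(2r-2)$ and is strictly less than $N$ for $k' < k$, so projection to $\P^{N}$ gives finiteness in the first case and emptiness of the general fiber in the second. Your closing paragraph on sharpness/existence goes beyond what the statement actually asserts (finiteness includes the empty case), but it is a harmless and accurate aside.
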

\begin{proof}
This is standard, and left to the reader.  
\end{proof}

\subsubsection{First-order analysis near a $k$-incident line} Let $(X,\ell) \in I_{k}(m_{1},\dots,m_{k})$ be a general point. We will study the first order deformations of $X \cap \ell$, as we vary $\ell$. 

Fix homogeneous coordinates $\left[ x_{0}, \dots , x_{r} \right]$ for $\P^{r}$ such that $\ell$ is given by $x_{0} = \dots = x_{r-2} = 0$. Let $X$ be given by the degree $d$ equation 
\begin{align*}
  F = P(x_{r-1}, x_{r}) + \sum_{j=0}^{d-2}x_{j}G_{j}  
\end{align*}
where 
\begin{align*}
  P = \prod_{i=1}^{k}\left( x_{r-1} - \alpha_{i}x_{r} \right)^{m_{i}}.
\end{align*}

\begin{definition}
  \label{definition:groupG}
  Let $G \subset \PGL_{r+1}$ be the group of matrices of the form:
  \begin{align*}
    G = \begin{bmatrix}
		1 & 0 & \dots & 0 & 0 \\
		0 & 1 & \dots & 0 & 0 \\
		\hdotsfor{5} \\
		a_{r-1,0} & a_{r-1,1} & \dots & 1 & 0 \\
		a_{r,0} & a_{r,1} & \dots & 0 & 1
    \end{bmatrix}
  \end{align*}
  where $a_{i,j}$, $(i = r-1, r), (j = 0 , \dots , r-2)$ are any constants.
\end{definition}

\begin{remark}
  \label{remark:opensetgrassman}
  The orbit $G \cdot [\ell]$ is an open subset of $\G(1,r)$ isomorphic to $\A^{2r-2}$.
\end{remark}

\begin{definition}
  \label{definition:familyG}
  Let $Z \subset \ell \times G$ be the union 
  \begin{align*}
   \bigcup_{g \in G}  \left(\left(g \cdot X\right)\cap \ell,g\right).
  \end{align*}
  
\end{definition}

\begin{lemma}
  \label{lemma:ZisSigma}
  The variety $Z \subset \ell \times G$ is isomorphic to the the variety $\Sigma \subset \P$ above the open set $G \subset \G\left( 1,r \right)$. 
\end{lemma}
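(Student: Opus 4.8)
The plan is to produce an explicit isomorphism by using the $G$-action to trivialize the tautological bundle over the orbit of $[\ell]$. Conceptually, both $Z$ and the restriction of $\Sigma$ over the orbit $O := G\cdot[\ell]$ encode the \emph{same} universal intersection of $X$ with the lines in $O$; they differ only in the choice of frame. The correspondence $\Sigma$ records this data by letting the line vary while keeping $X$ fixed, whereas $Z$ keeps the line $\ell$ fixed and instead lets the translate $g^{-1}\cdot X$ vary. Translation by $G$ interchanges the two descriptions, and it is this translation that I will upgrade to an isomorphism of varieties.

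First I would use \autoref{remark:opensetgrassman} to identify $G$ with the open subset $O=G\cdot[\ell]\subset\G(1,r)$ via the orbit map $g\mapsto g\cdot[\ell]$. Since $\dim G=2r-2=\dim\G(1,r)$ and the stabilizer of $[\ell]$ is trivial, this map is a bijective morphism of smooth varieties in characteristic zero, hence an isomorphism $G\xrightarrow{\sim}O$. Next, $G\subset\PGL_{r+1}$ acts on $\P^{r}$, and therefore acts compatibly on $\G(1,r)$ and on the tautological bundle $\P$ of \autoref{definition:tautological}, with $\pi$ being $G$-equivariant. Restricting over $O$ and using that the action on the base is simply transitive, the assignment sending $(v,[\ell'])\in\P|_{O}$ with $[\ell']=g\cdot[\ell]$ to $(g^{-1}\cdot v,\,g)$ is an isomorphism $\P|_{O}\xrightarrow{\sim}\ell\times G$ trivializing the bundle. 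This is the structural heart of the argument.

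With the trivialization in hand I would define $\Psi:Z\to\Sigma|_{O}$ by $(p,g)\mapsto(g^{-1}\cdot p,\,g^{-1}\cdot[\ell])$. One checks it is well defined with image in $\Sigma$: if $p\in(g\cdot X)\cap\ell$ then $g^{-1}\cdot p\in X$ and $g^{-1}\cdot p\in g^{-1}\cdot\ell$, so the pair lies in $\Sigma$. The inverse sends $(x,[\ell'])$, writing $[\ell']=g^{-1}\cdot[\ell]$ for the unique $g\in G$, to $(g\cdot x,\,g)$, which lies in $Z$ because $g\cdot x\in(g\cdot X)\cap\ell$; both maps are morphisms since the $G$-action is algebraic and the orbit map is an isomorphism. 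Crucially, I would verify the identification is \emph{scheme-theoretic}: over each $g$ the automorphism $g^{-1}$ of $\P^{r}$ carries $g\cdot\ell$ to $\ell$ and $X$ to $g^{-1}\cdot X$, hence carries the length-$d$ scheme $X\cap(g\cdot\ell)$ isomorphically onto $(g^{-1}\cdot X)\cap\ell$, which is precisely the fiber of $Z$ over $g$. Thus $\Psi$ matches fibers as subschemes of $\P^{1}$, preserving all intersection multiplicities.

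I expect no deep obstacle here; the lemma is essentially formal, and the work is bookkeeping. The two points requiring genuine care are (i) confirming the orbit map $G\to O$ is an isomorphism of varieties rather than merely a bijection, for which \autoref{remark:opensetgrassman} together with triviality of the stabilizer suffices, and (ii) insisting that the identification respect the scheme structure, so that the divisor type of $X\cap\ell'$ is preserved and the multiplicity strata $I_{k}(m_{1},\dots,m_{k})$ of \autoref{notation1} are carried correctly under $\Psi$; this compatibility is exactly what makes the lemma usable in the later versality and monodromy arguments.
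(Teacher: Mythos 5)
Your argument is correct and is exactly the formalization of what the paper's proof simply declares to be ``clear'': identify $G$ with the open orbit $G\cdot[\ell]$ via \autoref{remark:opensetgrassman}, trivialize the tautological $\P^{1}$-bundle over that orbit, and observe that translation by $g^{-1}$ carries $X\cap(g\cdot\ell)$ scheme-theoretically onto $(g^{-1}\cdot X)\cap\ell$. No substantive difference from the paper's (one-line) proof.
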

\begin{proof}
  This is clear.
\end{proof}

Our goal here is to understand the first-order infinitesimal neighborhood of the the family $Z \subset \ell \times G \to G$ near the point $\left[ \ell \right] \in G$. 

\begin{definition}
  \label{definition:gj}
  We let $g_{j}$ be the polynomial in $x_{r-1},x_{r}$ which is the restriction of $G_{j}$ to $\ell$.
\end{definition}

\begin{lemma}
  \label{lemma:firstorderG}
  Let $\left\{ c_{i,j} \right\}, (i=r-1,r), (j=0,\dots,r-2)$ be the elements $\partial_{a_{i,j}} \in T_{\left[ \ell \right]}G$. 

  The first-order family $Z_{\varepsilon} \subset \ell \times T_{\left[ \ell \right]}G \to T_{\left[ \ell \right]}G$ is isomorphic to the zero locus of: 
  \begin{align*}
    P\left( x_{r-1},x_{r} \right) + \sum_{j=0}^{r-2}\left( c_{r-1,j}x_{r-1}+c_{r,j}x_{r} \right)g_{j}.
  \end{align*}

\end{lemma}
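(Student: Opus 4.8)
The plan is to realize the family $Z \to G$ explicitly by pulling back the defining equation $F$ along the tautological family of lines, and then to expand to first order in the tangent coordinates $c_{i,j}$. By \autoref{lemma:ZisSigma} the fiber of $Z$ over $g \in G$ is the divisor $(g\cdot X) \cap \ell$, which is cut out on $\ell$ (coordinatized by $[x_{r-1}:x_r]$) by the binary form obtained from $F$ by applying the appropriate action of $g$ to the standard parametrization $[x_{r-1}:x_r] \mapsto (0,\dots,0,x_{r-1},x_r)$ of $\ell$. First I would write the one-parameter subgroup with tangent vector $\sum_{i,j} c_{i,j}\partial_{a_{i,j}}$ as $\id + \varepsilon A + O(\varepsilon^2)$ over the dual numbers, and record that the corresponding parametrized point is
\begin{align*}
  p(x_{r-1},x_r) = \big(\varepsilon(c_{r-1,0}x_{r-1}+c_{r,0}x_r),\,\dots,\,\varepsilon(c_{r-1,r-2}x_{r-1}+c_{r,r-2}x_r),\,x_{r-1},\,x_r\big) + O(\varepsilon^2).
\end{align*}
The decisive structural feature is that the two ``tangential'' coordinates $x_{r-1},x_r$ are undisturbed at first order, while each ``normal'' coordinate $x_j$ $(0 \le j \le r-2)$ acquires the infinitesimal value $\varepsilon(c_{r-1,j}x_{r-1}+c_{r,j}x_r)$.

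Next I would substitute $p$ into $F = P + \sum_{j=0}^{r-2} x_j G_j$ and retain only terms through order $\varepsilon$. Since $P = \prod_i (x_{r-1}-\alpha_i x_r)^{m_i}$ involves only the coordinates $x_{r-1},x_r$, which equal $x_{r-1},x_r$ on the nose, one has $P(p) = P(x_{r-1},x_r)$ with no $\varepsilon$-correction. For each $j$, the factor $x_j$ in $x_j G_j$ is already of order $\varepsilon$, so to first order it suffices to evaluate $G_j$ at $\varepsilon = 0$, i.e. at the point $(0,\dots,0,x_{r-1},x_r)\in \ell$; by \autoref{definition:gj} this is exactly $g_j(x_{r-1},x_r)$. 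Hence $x_j G_j(p) = \varepsilon(c_{r-1,j}x_{r-1}+c_{r,j}x_r)\,g_j(x_{r-1},x_r) + O(\varepsilon^2)$. Summing over $j$ and reading off the linear term in $\varepsilon$ (with the $c_{i,j}$ serving as the coordinates on $T_{[\ell]}G$) yields the asserted form $P + \sum_{j=0}^{r-2}(c_{r-1,j}x_{r-1}+c_{r,j}x_r)g_j$.

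I expect the one genuinely delicate point to be the bookkeeping in the first step: pinning down, with the correct action convention (and the attendant inverse and transpose), precisely how a tangent vector to $G$ deforms the parametrization of $\ell$, so that the normal coordinates pick up $\varepsilon(c_{r-1,j}x_{r-1}+c_{r,j}x_r)$ and nothing spurious appears in $x_{r-1},x_r$. Once the parametrization $p$ is correctly identified, the remaining expansion is immediate, because $P$ depends only on the undeformed coordinates and every other summand already carries a factor $x_j = O(\varepsilon)$, forcing $G_j$ to be evaluated merely along $\ell$. As this is entirely a first-order (dual-number) computation, no higher jets or convergence issues intervene.
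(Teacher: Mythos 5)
Your proposal is correct and is essentially the paper's own argument: the paper likewise applies the infinitesimal substitution $x_{j} \mapsto x_{j} + \varepsilon\left( c_{r-1,j}x_{r-1}+c_{r,j}x_{r} \right)$ for $j \leq r-2$ (fixing $x_{r-1},x_{r}$) to $F$ and restricts to $\ell$, with the $c_{i,j}$ as free coordinates on the Lie algebra of $G$. You simply make explicit the two observations the paper leaves implicit, namely that $P$ picks up no $\varepsilon$-correction and that each $x_{j}G_{j}$ contributes $\varepsilon\left( c_{r-1,j}x_{r-1}+c_{r,j}x_{r} \right)g_{j}$ because the factor $x_{j}$ is already of order $\varepsilon$.
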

\begin{proof}
  The first-order family parametrized by the Lie algebra of the group $G$ is given by applying the infinitesimal substitutions 
  \begin{align*}
    x_{0} &\mapsto x_{0} + \varepsilon(c_{r-1,0}x_{r-1}+ c_{r,0}x_{r})\\
    x_{1} &\mapsto x_{1} + \varepsilon(c_{r-1,1}x_{r-1}+c_{r,1}x_{r})\\
    &\vdots\\
    x_{r-1} &\mapsto x_{r-1}\\
    x_{r} &\mapsto x_{r}
  \end{align*}
  to the equation $F$ defining $X$ and then restricting to $\ell$. The constants $\left\{ c_{i,j} \right\}$ vary freely, and represent coordinates on the Lie algebra of $G$.  The end result is the family given in the statement of the lemma.
\end{proof}

\begin{remark}
  \label{remark:affine}
  It will usually be more convenient to use the affine notation 
  \begin{align*}
    p(z) + \sum_{j=0}^{r-2}\left( c_{r-1,j}z+c_{r,j} \right)g_{j}(z).
  \end{align*}
  
\end{remark}

Using the infinitesimal analysis above, we will now prove some lemmas which will allow us to understand the versality of the point-line correspondence $\Sigma \to \G\left( 1,r \right)$ near a $k$-incident line $\ell$, where $k = d -(2r-2)$. 

\subsection{Some lemmas on multiplication of polynomials} \label{multiplication}

\begin{notation}\label{notation2}
  Throughout  \autoref{multiplication}, we will keep the following notation.
  \begin{enumerate}
    \item Suppose $n_{1}, \dots n_{k}$ are positive integers such that $\sum_{i=1}^{k}n_{i}=d$ and $\sum_{i=1}^{k}(n_{i}-1)=2r-2$ for some positive $r$. 

    \item Let $P_{m}$ denote the vector space of polynomials in one variable $z$ with degree $\leq m$.  

    \item Finally, let $\alpha_{1}, \dots, \alpha_{k} \in \A^{1} = \Spec k[z]$ be distinct points. 

    \end{enumerate}

\end{notation}

\subsubsection{First Lemma}

\begin{lemma}
  \label{lemma:surjection}
  There exists an $r-1$-dimensional subspace $V_{r-1} \subset P_{d-1}$ such that the multiplication-then-restriction map 
  \begin{align*}
    \rho: P_{1} \otimes V_{r-1} \to \bigoplus_{i=1}^{k}k[z]/(z-\alpha_{i})^{n_{i}-1}
  \end{align*}
  is an isomorphism.
\end{lemma}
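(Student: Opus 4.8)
The plan is to reduce everything to a dimension count and then exhibit one explicit subspace that works. First I would record that both sides of $\rho$ have the same dimension: $\dim (P_1 \otimes V_{r-1}) = 2(r-1) = 2r-2$, while by the standing hypothesis $\sum_{i=1}^{k}(n_i-1) = 2r-2$, so the target also has dimension $2r-2$. Consequently it suffices to produce a single $V_{r-1}$ for which $\rho$ is surjective (equivalently injective). To organize the target I would invoke the Chinese Remainder Theorem: since the $\alpha_i$ are distinct, the ideals $(z-\alpha_i)^{n_i-1}$ are pairwise coprime, so $\bigoplus_{i} k[z]/(z-\alpha_i)^{n_i-1} \cong k[z]/(M)$, where $M = \prod_{i=1}^{k}(z-\alpha_i)^{n_i-1}$ has degree $2r-2$, and under this identification $\rho$ becomes $\ell \otimes g \mapsto \ell g \bmod M$.

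The key step is the explicit choice $V_{r-1} = \Span(1, z^2, z^4, \dots, z^{2r-4})$, the even powers of degree at most $2r-4$; this is an $(r-1)$-dimensional subspace of $P_{d-1}$ (the needed degree bound $2r-4 \le d-1$ holds since $d = 2r-2+k \ge 2r-1$). The image of $\rho$ is the reduction modulo $M$ of $\Span\{\ell g : \ell \in \{1,z\},\ g \in V_{r-1}\}$. Multiplying the even powers $z^{2i}$ by $1$ and by $z$ produces exactly the monomials $z^0, z^1, \dots, z^{2r-3}$, so this span is all of $P_{2r-3}$. Since $\deg M = 2r-2$, reduction modulo $M$ restricts to a bijection $P_{2r-3} \to k[z]/(M)$, hence $\rho$ is surjective, and by the dimension count it is an isomorphism.

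I do not expect a serious obstacle here: the entire content is finding the right $V_{r-1}$, and the even-power choice is essentially forced by the requirement that $V_{r-1} + zV_{r-1}$ fill in every degree from $0$ to $2r-3$. Two small points I would verify carefully. First, injectivity can be seen directly and perhaps more transparently than surjectivity: if $G_0 + zG_1 \equiv 0 \pmod M$ with $G_0, G_1 \in V_{r-1}$, then $G_0 + zG_1$ is an honest polynomial of degree $\le 2r-3 < \deg M$, hence is identically zero, and since $G_0$ is supported on even degrees and $zG_1$ on odd degrees both must vanish. Second, I would emphasize that distinctness of the $\alpha_i$ is the only property used, so the conclusion holds for every configuration rather than merely a generic one; this uniformity is precisely what makes a single fixed choice of $V_{r-1}$ suffice.
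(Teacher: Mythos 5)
Your proof is correct, and it takes a genuinely different route from the paper's. The paper builds $V_{r-1}$ as a sum of subspaces $A_i$ (and, when some $n_i$ are even, paired subspaces $B_{i,j}$) each spanned by polynomials of the form $p_i\cdot(z-\alpha_i)^{2j}$ with $p_i=\prod_{j\neq i}(z-\alpha_j)^{n_j-1}$, so that $\rho$ becomes block-diagonal with respect to the direct sum decomposition of the target; this requires a case analysis on the parities of the $n_i$. You instead collapse the target via the Chinese Remainder Theorem to $k[z]/(M)$ with $\deg M=2r-2$, take $V_{r-1}$ to be the even monomials $1,z^2,\dots,z^{2r-4}$, and observe that $V_{r-1}+zV_{r-1}=P_{2r-3}$ maps bijectively onto $k[z]/(M)$ by a degree count. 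Your argument is more uniform (no parity cases, and it works verbatim when some $n_i=1$, where the corresponding factor is the zero ring) and makes the injectivity transparent via the even/odd degree splitting. What the paper's construction buys is that the \emph{same} explicit $V_{r-1}$ is reused in the subsequent Lemma \ref{lemma:transversetothing}, whose proof is deferred to the reader precisely on the grounds that it is the space already built; if you substitute your $V_{r-1}$, you should check that lemma as well. It does hold for your choice: the enlarged modulus $(z-\alpha_1)^{n_1}\prod_{i\geq 2}(z-\alpha_i)^{n_i-1}$ has degree $2r-1>2r-3$, giving injectivity of $\rho'$, and the unique representative of $w$ of degree $<2r-1$ is $c\,(z-\alpha_1)^{n_1-1}\prod_{i\geq 2}(z-\alpha_i)^{n_i-1}$, which has degree exactly $2r-2$ and so does not lie in $P_{2r-3}=\operatorname{im}\rho'$. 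Both approaches are consistent with the paper's Remark \ref{remark:openproperty} that a general $V_{r-1}$ works.
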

\begin{proof}
  We will explicitly construct a vector space $V_{r-1}$ with the desired property.  
  
  Our construction is slightly different depending on the parities of the numbers $n_{i}$.  So, for the convenience of the reader, let us first assume all $n_{i}$ are odd. 

  Then for each $i$, we define the polynomial 
  \begin{align*}
    p_{i} := \prod_{j \neq i}(z-\alpha_{j})^{n_{j}-1}. 
  \end{align*}
  
  Next, for each $i$  consider the vector space 
  \begin{align*}
    A_{i} := \Span\left\{p_{i} \cdot 1,p_{i} \cdot (z-\alpha_{i})^{2}, p_{i}\cdot (z-\alpha_{i})^{4}, \dots,p_{i} \cdot (z-\alpha_{i})^{n_{i}-3} \right\}. 
  \end{align*}
 
  Then it is immediately verified that 
  \begin{align*}
    \rho(P_{1} \otimes A_{i}) = (0, \dots, k[z]/(z-\alpha_{i})^{n_{i}-1}, 0 ,\dots).
  \end{align*}
  Note that $\dim A_{i} = (n_{i}-1)/2$.  By letting $V_{r-1} = A_{1}+ \dots+ A_{k}$, the lemma follows in this case.  
 
  Now suppose $n_{1}$ is even. Since $\sum_{}^{}(n_{i}-1)$ is even, we may assume without loss of generality that $n_{2}$ is also even.
   Then we define the polynomials 
   \begin{align*}
     q_{1} = (z-\alpha_{2})^{n_{2}-2}\prod_{j \neq 1,2}(z-\alpha_{j})^{n_{j}-1}\\
     q_{2} = (z-\alpha_{1})^{n_{1}-2}\prod_{j \neq 1,2}(z-\alpha_{j})^{n_{j}-1}.
   \end{align*}
   Next set 
   \begin{align*}
   B_{1,2}:= \Span\left\{ q_{1}, q_{1}(z-\alpha_{1})^{2}, q_{1}(z-\alpha_{1})^{4}  \dots q_{1}(z-\alpha_{1})^{n_{1}-2} = q_{2}(z-\alpha_{2})^{n_{2}-2}, \dots, q_{2}(z-\alpha_{2})^{2}, q_{2} \right\}.
   \end{align*}
   Then it is straightforward to verify that 
   \begin{align*}
     \rho(P_{1} \otimes B_{1,2}) = (k[z]/(z-\alpha_{1})^{n_{1}-1},k[z]/(z-\alpha_{2})^{n_{2}-1},0,0, \dots ). 
   \end{align*}
   Note that $\dim B_{1,2} = (n_{1}+ n_{2}-2)/2$. By pairing up the remaining even $n_{i}$ and creating the analogous vector spaces $B_{i,j}$, and then for odd $n_{i}$ using the spaces $A_{i}$, one can check that 
   \begin{align*}
     V_{r-1}:= \sum_{n_{i}\text{\,\,odd}} A_{i} + B_{1,2} + \dots 
   \end{align*}
   has the desired property, i.e. that $\rho$ is an isomorphism.
\end{proof}

\subsubsection{Second Lemma}

\begin{lemma}
  \label{lemma:transversetothing}
  Maintain the setup before the previous lemma. Then there exists an $r-1$-dimensional subspace $V_{r-1} \subset P_{d-1}$ such that the multiplication-then-restriction map 
  \begin{align*}
    \rho': P_{1}\otimes V_{r-1} \to k[z]/(z-\alpha_{1})^{n_{1}} \oplus_{i=2}^{k}k[z]/(z-\alpha_{i})^{n_{i}-1}
  \end{align*}
  has the following properties: 
  \begin{enumerate}
    \item $\rho'$ is injective.
    \item The image of $\rho'$ does not contain the element 
      \begin{align*}
	w = ((z-\alpha_{1})^{n_{1}-1},0,0,\dots).
      \end{align*}
      
    \end{enumerate}
\end{lemma}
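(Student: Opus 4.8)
The plan is to reuse the subspace $V_{r-1} \subset P_{d-1}$ produced by \autoref{lemma:surjection}, with no modification, and to deduce both properties from the fact that the map $\rho$ of that lemma is an isomorphism. The key observation is that the target of $\rho'$ differs from the target of $\rho$ only in its first summand, where $k[z]/(z-\alpha_1)^{n_1-1}$ has been enlarged by one dimension to $k[z]/(z-\alpha_1)^{n_1}$. Accordingly, I would introduce the reduction map
\[
  \pi: k[z]/(z-\alpha_1)^{n_1} \oplus \bigoplus_{i=2}^{k} k[z]/(z-\alpha_i)^{n_i-1} \to \bigoplus_{i=1}^{k} k[z]/(z-\alpha_i)^{n_i-1}
\]
which quotients the first factor by $(z-\alpha_1)^{n_1-1}$ and is the identity on the remaining factors. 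Since $\rho'$ and $\rho$ are both ``multiply-then-restrict'' maps built from the same $V_{r-1}$, one checks immediately that $\pi \circ \rho' = \rho$.

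Property (1) then follows at once: if $\xi \in \ker \rho'$, then $\rho(\xi) = \pi(\rho'(\xi)) = 0$, and since $\rho$ is an isomorphism we get $\xi = 0$. Thus $\rho'$ is injective, and a dimension count shows its image has codimension one, the source having dimension $2(r-1) = 2r-2$ while the target has dimension $n_1 + \sum_{i=2}^{k}(n_i-1) = 2r-1$.

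For property (2), I would first note that $w = ((z-\alpha_1)^{n_1-1}, 0, \dots, 0)$ spans the kernel of $\pi$: the polynomial $(z-\alpha_1)^{n_1-1}$ is nonzero in $k[z]/(z-\alpha_1)^{n_1}$ but dies upon reduction modulo $(z-\alpha_1)^{n_1-1}$. Now if $w = \rho'(\xi)$ for some $\xi$, then $\rho(\xi) = \pi(w) = 0$, forcing $\xi = 0$ and hence $w = 0$, a contradiction. Therefore $w \notin \im \rho'$. Equivalently, $\pi$ carries the codimension-one subspace $\im \rho'$ isomorphically onto the smaller target, so that $\im \rho' \oplus \langle w \rangle$ fills the entire target, which is exactly the transversality being asserted.

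There is essentially no computational obstacle here once the projection $\pi$ is in hand; the whole argument rests on the single identity $\pi \circ \rho' = \rho$ together with the fact that $w$ generates $\ker \pi$. The only point demanding a moment's care is the legitimacy of reusing the very same $V_{r-1}$ from \autoref{lemma:surjection}, and this is immediate because the standing hypotheses of \autoref{notation2} are unchanged between the two lemmas.
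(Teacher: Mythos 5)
Your proposal is correct and takes the same route as the paper, whose proof consists precisely of the assertion that the $V_{r-1}$ from \autoref{lemma:surjection} works, with the verification left to the reader. Your factorization $\pi \circ \rho' = \rho$ and the observation that $w$ spans $\ker \pi$ supply exactly that omitted verification.
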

\begin{proof}
  The spaces $V_{r-1}$ constructed in the proof of \autoref{lemma:surjection} satisfy these two properties.  We leave the verification to the reader.
\end{proof}

\begin{remark}
  \label{remark:openproperty}
  The conditions in these two lemmas are open -- a general $V_{r-1} \subset P_{d-1}$ will have the desired properties in each lemma. 
\end{remark}

\begin{theorem}
  \label{theorem:versal}
  Let $\sum_{i=1}^{k}m_{k}=d$, and suppose $\sum_{i}^{}m_{i}-1 = 2r-2$.  Suppose $\left( X,\ell \right)$ is a general point in $I_{k}\left( m_{1}, \dots , m_{k} \right)$.  Then the point-line correspondence $\pi: \Sigma \to \G\left( 1,r \right)$ is mini-versal near the $k$-incident line  $[\ell] \in \G\left( 1,r \right)$. 
\end{theorem}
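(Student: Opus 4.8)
The plan is to verify mini-versality by computing the tangent map of the versality classifying map explicitly and recognizing it as the isomorphism $\rho$ of \autoref{lemma:surjection}. First I would work in the orbit chart: by \autoref{remark:opensetgrassman} the orbit $G\cdot[\ell]$ is open in $\G(1,r)$, so $T_{[\ell]}G = T_{[\ell]}\G(1,r)$, and by \autoref{lemma:ZisSigma} the family $Z\subset \ell\times G$ agrees with $\Sigma$ over this chart. Since $Z_{[\ell]}=X\cap\ell=\sum_i m_i p_i$ is of the form required by \autoref{proposition:versalitymultiple}, the classifying map $(U,[\ell])\to \prod_i \A^{m_i-1}$ exists. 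Both its source $T_{[\ell]}\G(1,r)$ and target $\prod_i \A^{m_i-1}$ have dimension $2r-2$ (here using the hypothesis $\sum_i(m_i-1)=2r-2$), so by \autoref{definition:mini-versal} mini-versality is equivalent to this tangent map being an isomorphism, and by equality of dimensions it suffices to prove either injectivity or surjectivity.

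Next I would read off the tangent map from the first-order family. By \autoref{lemma:firstorderG} and the affine notation of \autoref{remark:affine}, the family $Z_\varepsilon$ is cut out by $p(z)+\varepsilon h(z)$, where $p=\prod_i (z-\alpha_i)^{m_i}$ and $h=\sum_{j=0}^{r-2}(c_{r-1,j}z+c_{r,j})g_j$. Fixing $i$, write $p=(z-\alpha_i)^{m_i}u_i$ with $u_i(\alpha_i)\neq 0$ a local unit. Factoring out $u_i$ and completing the power exactly as in the proof of \autoref{proposition:versality} (this is \autoref{proposition:versalitymultiple} applied to the cluster at $\alpha_i$) identifies the $i$-th versal coordinate of the deformation with the class of $h/u_i$ modulo $(z-\alpha_i)^{m_i-1}$. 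Thus the tangent map is
\[
  \Phi\colon \{c_{i,j}\}\longmapsto \left( h/u_i \bmod (z-\alpha_i)^{m_i-1} \right)_{i=1}^{k} \in \bigoplus_{i=1}^{k} k[z]/(z-\alpha_i)^{m_i-1}.
\]

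Since $u_i$ is a unit in the Artinian ring $k[z]/(z-\alpha_i)^{m_i-1}$, multiplication by $u_i^{-1}$ is an automorphism, so $\Phi$ is an isomorphism if and only if $\{c_{i,j}\}\mapsto (h\bmod(z-\alpha_i)^{m_i-1})_i$ is. Under the identification $T_{[\ell]}G\cong P_1\otimes V_{r-1}$ sending $\{c_{i,j}\}$ to $\sum_j (c_{r-1,j}z+c_{r,j})\otimes g_j$ (an isomorphism once the $g_j$ are independent), this last map is precisely the map $\rho$ of \autoref{lemma:surjection} with $n_i=m_i$ and $V_{r-1}=\langle g_0,\dots,g_{r-2}\rangle$. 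It then remains to check genericity of $V_{r-1}$: for $\ell$ fixed, membership in $I_k(m_1,\dots,m_k)$ constrains only $p=F|_\ell$ and leaves the forms $G_j$ free, so the restrictions $g_j=G_j|_\ell$ range over all binary forms of degree $d-1$ and span a general $(r-1)$-plane $V_{r-1}\subset P_{d-1}$. By \autoref{lemma:surjection} together with \autoref{remark:openproperty}, $\rho$ is then an isomorphism. As this is an open, nonempty condition and $I_k(m_1,\dots,m_k)$ is irreducible by \autoref{lemma:Ikirreducible}, $\Phi$ is an isomorphism for general $(X,\ell)$, which is the desired mini-versality.

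The main obstacle is the second step: correctly converting the explicit deformation $p+\varepsilon h$ into versal coordinates, i.e.\ checking via Weierstrass factorization and completing the power that the $i$-th coordinate is the class of $h/u_i$ modulo $(z-\alpha_i)^{m_i-1}$ (note the exponent $m_i-1$, reflecting the vanishing $s^{n-1}t$ term in \autoref{definition:Wn}, rather than $m_i$), and carrying the unit $u_i$ through the reduction. Once that identification is pinned down, the statement reduces cleanly to \autoref{lemma:surjection} and a dimension count.
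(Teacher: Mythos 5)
Your proposal is correct and takes essentially the same route as the paper: the paper's (very terse) proof simply asserts that the map $\rho$ of \autoref{lemma:surjection} is the differential of the classifying map from \autoref{proposition:versalitymultiple} and concludes, and your write-up supplies exactly the missing details (the orbit chart $G$, the first-order family from \autoref{lemma:firstorderG}, the unit $u_i$ and the completion of the power, and the genericity of $V_{r-1}=\langle g_0,\dots,g_{r-2}\rangle$). No gaps; this is a faithful expansion of the intended argument.
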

\begin{proof}
  The map $\rho$ in \autoref{lemma:surjection} is the differential of (any of) the induced maps  
  \begin{align*}
    \left( U, \left[ \ell \right] \right) \to W_{m_{1}} \times \dots W_{m_{k}}
  \end{align*}
  from \autoref{proposition:versalitymultiple} (Here $U$ is an \'etale neighborhood of $\ell$). The theorem follows immediately from \autoref{proposition:versalitymultiple}.  
\end{proof}

\begin{corollary}
  \label{corollary:versalkincident}
  Let $X \subset \P^{r}$ be a general degree $d \geq 2r-1$ hypersurface, and let $k = d-(2r-2)$. Then 
  \begin{align*}
    \left( \Sigma \subset \P \to \G\left( 1,r \right) \right)
  \end{align*}
  is versal at every $k$-incident line $\left[ \ell \right] \in \G\left( 1,r \right)$.
\end{corollary}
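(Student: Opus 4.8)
The plan is to deduce the result from \autoref{theorem:versal} via a dimension count, the crucial input being that a general $X$ carries only finitely many $k$-incident lines. First I would record that every $k$-incident line $[\ell]$ comes with a well-defined multiplicity vector $(m_{1},\dots,m_{k})$ with $\sum_{i} m_{i}=d$, and since its $k$ intersection points with $X$ are distinct, $\sum_{i}(m_{i}-1)=d-k=2r-2$. Thus $[\ell]$ is precisely a $k$-incident line of the kind treated in \autoref{theorem:versal}, and $(X,\ell)\in I_{k}(m_{1},\dots,m_{k})$. As there are only finitely many such multiplicity vectors, it suffices to fix one and show that a general $X$ is versal at every $k$-incident line of that type.

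So fix $(m_{1},\dots,m_{k})$ and let $B\subset I_{k}(m_{1},\dots,m_{k})$ be the locus of non-versal pairs. I claim $B$ is a proper closed subvariety. By \autoref{remark:nonunique} the induced tangent map is canonical, and by the proof of \autoref{theorem:versal} it is exactly the map $\rho$ of \autoref{lemma:surjection}; both its source $P_{1}\otimes V_{r-1}$ and its target $\bigoplus_{i=1}^{k}k[z]/(z-\alpha_{i})^{m_{i}-1}$ have dimension $2r-2$. Hence versality at $(X,\ell)$ is equivalent to $\det\rho\neq 0$, so $B$ is cut out by the vanishing of this determinant and is closed. Since \autoref{theorem:versal} (equivalently \autoref{lemma:surjection}) shows $\det\rho$ is not identically zero, and $I_{k}(m_{1},\dots,m_{k})$ is irreducible by \autoref{lemma:Ikirreducible}, the locus $B$ is proper.

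Now consider the projection $p\colon I_{k}(m_{1},\dots,m_{k})\to\P^{N}$ to the hypersurface factor. By \autoref{proposition:finitek-incident} the general fiber of $p$ is finite, so $\dm I_{k}(m_{1},\dots,m_{k})=\dm\overline{p(I_{k})}\leq N$. As $B$ is a proper closed subset of an irreducible variety of dimension $\leq N$, we get $\dm B\leq N-1$, and therefore $\overline{p(B)}$ is a proper closed subset of $\P^{N}$. Taking the intersection of the (finitely many) dense opens $\P^{N}\setminus\overline{p(B)}$ over all $(m_{1},\dots,m_{k})$ with $\sum_i m_{i}=d$, together with the open locus supplied by \autoref{proposition:finitek-incident}, produces a dense open $U\subset\P^{N}$. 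For $X\in U$ every $k$-incident line avoids $B$ and is hence versal, which is the assertion.

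The step demanding the most care is the transition from \autoref{theorem:versal} --- versality at a \emph{general} point of each $I_{k}$ --- to versality at \emph{every} $k$-incident line of a general $X$. This is precisely where \autoref{proposition:finitek-incident} is indispensable: the finiteness of the fibers of $p$ forces $\dm I_{k}\leq N$, so that the image $\overline{p(B)}$ of the lower-dimensional bad locus cannot be dense in $\P^{N}$. If the $k$-incident lines moved in positive-dimensional families, $p$ could contract $B$ onto a dense subset and the argument would collapse; the numerical hypothesis $k=d-(2r-2)$ is exactly what rules this out.
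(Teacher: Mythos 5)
Your proposal is correct and follows essentially the same route as the paper: the non-versal locus in each irreducible $I_{k}(m_{1},\dots,m_{k})$ is a proper closed subset by \autoref{theorem:versal}, its image in $\P^{N}$ is therefore proper closed since $I_{k}\to\P^{N}$ is generically finite, and one intersects the resulting dense opens over the finitely many multiplicity vectors. Your write-up merely makes explicit the points the paper leaves implicit (closedness of the bad locus via the determinant of $\rho$, and the dimension count for its image).
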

\begin{proof}
  
  When $k = d-2(r-1)$, the irreducible variety $I_{k}\left( m_{1}, \dots, m_{k} \right)$ maps generically finitely onto $\P^{N}$. The locus consisting of pairs $(X,\ell) \in I_{k}(m_{1}, \dots , m_{k})$ for which the point-line correspondence fails to be versal at $\ell$ is a strict (closed) subset of $I_{k}\left( m_{1}, \dots ,m_{k} \right)$ by \autoref{theorem:versal}. Since this is true for all multiplicity data $\left( m_{1}, \dots, m_{k} \right)$, we conclude that there is a Zariski open subset of $\P^{N}$ for which the corollary holds.
\end{proof}

\section{Maximal monodromy} The purpose of this section is to establish  a basic result about the monodromy of lines meeting a general hypersurface at the least possible number of points.   

\begin{theorem}
  \label{theorem:maximalmonodromytheorem}
  Suppose  $k = d - (2r-2) \geq 2 $, and suppose $\sum_{i=1}^{k}m_{i}=d$ is a partition with at least two distinct parts.
    Then the Galois group of the generically finite projection 
  \begin{align*}
    \varphi : I_{k}\left( m_{1}, \dots, m_{k} \right) \to 
    \P^{N}  \end{align*}
  is the full symmetric group.
\end{theorem}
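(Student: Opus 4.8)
The plan is to show that the Galois (monodromy) group $G$ of the cover $\varphi : I_{k}(m_{1},\dots,m_{k}) \to \P^{N}$ is the full symmetric group by verifying two standard sufficient conditions: that $G$ is \emph{transitive} (equivalently, that $I_{k}(m_{1},\dots,m_{k})$ is irreducible), and that $G$ is \emph{2-transitive} together with containing a \emph{simple transposition}. Recall that a transitive subgroup of $S_{M}$ that is 2-transitive and contains a transposition must be all of $S_{M}$. Transitivity is already in hand: $I_{k}(m_{1},\dots,m_{k})$ is irreducible by \autoref{lemma:Ikirreducible}, so the monodromy group acts transitively on the sheets.

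\textbf{Producing a transposition.} The key geometric input is the versality established in \autoref{corollary:versalkincident} and \autoref{theorem:versal}: near a $k$-incident line the point-line correspondence $\Sigma \to \G(1,r)$ is (mini-)versal, so the local behaviour of $\varphi$ is modelled by the mini-versal unfoldings $W_{m_{1}} \times \dots \times W_{m_{k}}$. To get a transposition I would move to the boundary of $I_{k}(m_{1},\dots,m_{k})$ where exactly two of the $k$ intersection points collide into a single point of higher multiplicity, while all the other points stay distinct and the line stays $k$-incident in a neighbourhood except along this one branch. Since the multiplicity vector has at least two distinct parts, I can choose two parts $m_{a} \ne m_{b}$ — this inequality is what makes the analysis clean: when two \emph{distinct} multiplicities are involved, the local picture near the collision is a simple node/branch point of the cover, so the local monodromy is a single transposition interchanging exactly the two colliding sheets. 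Versality guarantees that this degeneration is actually realized transversally over $\P^{N}$ (the map to the relevant $W$-factor is a submersion/isomorphism), so the discriminant of $\varphi$ is reduced along the corresponding branch and the associated monodromy is genuinely a transposition rather than a longer cycle or a product. The hypothesis of at least two distinct parts is essential here: if all parts were equal the natural degenerations would permute blocks and could fail to yield a single transposition.

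\textbf{Upgrading to 2-transitivity.} Having a transposition in a transitive group, it remains to prove 2-transitivity, for which I would analyze the fiber product $I_{k,j}''(m_{1},\dots,m_{k} \mid n_{1},\dots,n_{j})$ with $j = k$ and $(n_{1},\dots,n_{k}) = (m_{1},\dots,m_{k})$, which parametrizes ordered pairs of distinct points (sheets) of the fiber. By \autoref{lemma:2irreducible} this variety is irreducible, and the diagonal (pairs lying over the same hypersurface and line) corresponds to the product $I_{k} \times_{\P^{N}} I_{k}$ minus the diagonal. Irreducibility of the space of ordered pairs of sheets is exactly the statement that $G$ acts transitively on ordered pairs, i.e. $G$ is 2-transitive. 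I would need to check that the off-diagonal component of the fiber product is irreducible and dominates $\P^{N}$, which follows from \autoref{lemma:2irreducible} applied with the two lines specialized to the same line (or more carefully, a variant computing the restriction map $\Gamma(O_{\P^{r}}(d)) \to \Gamma(O_{\ell}(d))$ to be surjective so the fibers are linear spaces of constant dimension).

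\textbf{Main obstacle.} The delicate point I expect to be hardest is the local analysis producing the transposition: one must verify, using the explicit mini-versal model and the first-order computation in \autoref{lemma:firstorderG} together with \autoref{lemma:surjection}, that when two points of distinct multiplicities $m_{a} \ne m_{b}$ collide, the branch of the discriminant is smooth and the cover is locally analytically a simple double cover there, so that the vanishing cycle gives precisely one transposition. Controlling this requires knowing that the degeneration stays within $I_{k}$ transversally and that no extra collisions or coincidences occur generically along the branch — this is where the versality result does the real work, and where the assumption that the two parts are distinct is used to rule out the symmetric, non-transposition local monodromy that equal multiplicities would produce.
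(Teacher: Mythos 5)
Your proposal follows essentially the same route as the paper: 2-transitivity via the irreducibility of $I_{k,k}''(m_{1},\dots,m_{k}\mid m_{1},\dots,m_{k})$ dominating the fiber product minus the diagonal, and a transposition obtained by colliding two intersection points of \emph{distinct} multiplicities $m_{1}\neq m_{2}$ into a $(k-1)$-incident line, with versality guaranteeing that the local picture is a reduced length-two degeneration along a uni-branched discriminantal curve (the paper makes this precise via the curve $Z_{m_{1},m_{2}}$ in the mini-versal base, the transversality of \autoref{lemma:transversetothing}, and the ``no unintended consequences'' statement \autoref{corollary:nostrangecomm}). The only slight divergence is your suggestion to specialize the two lines to a common line for 2-transitivity, whereas the paper works with disjoint pairs and disposes of intersecting pairs separately; this does not affect correctness.
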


Our approach to proving \autoref{theorem:maximalmonodromytheorem} is standard. Let $G$ denote the Galois group of $\varphi$. We break the proof up into two standard steps, in increasing difficulty: \begin{enumerate}
  \item Show $G$ is $2$-transitive.
  \item Show $G$ contains a simple transposition.
\end{enumerate}

These two  properties imply that $G$ is the full symmetric group. Step $1$ is easy and quite standard, whereas Step $2$ requires some care, and will occupy most of our attention.

\subsection{$2$-transitivity}

\begin{lemma}
  \label{corollary:2transitive}
  The monodromy group  $G$ is $2$-transitive.
\end{lemma}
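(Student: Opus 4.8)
The plan is to prove $2$-transitivity of the monodromy group $G$ by the standard incidence-geometry criterion: it suffices to exhibit an irreducible variety parametrizing \emph{ordered pairs} of distinct branches of $\varphi$ lying over a common point of $\P^{N}$, and to show this variety dominates $\P^{N}$. Concretely, $2$-transitivity of $G$ is equivalent to the irreducibility of the fiber product $I_{k} \times_{\P^{N}} I_{k}$ with the diagonal removed; equivalently, one wants the incidence correspondence $I''_{k,k}(m_{1}, \dots, m_{k} \mid m_{1}, \dots, m_{k})$ of pairs $(X, \ell, \ell')$ of \emph{distinct} $k$-incident lines (with the prescribed multiplicity vector) to be irreducible and dominant over $\P^{N}$.

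First I would invoke \autoref{lemma:2irreducible}, which already establishes that $I''_{k,j}(m_{1}, \dots, m_{k} \mid n_{1}, \dots, n_{j})$ is irreducible for all choices of data; specializing to $j = k$ and the matching multiplicity vectors $(n_{1}, \dots, n_{k}) = (m_{1}, \dots, m_{k})$ gives irreducibility of the space of triples $(X, \ell, \ell')$ where both lines are $k$-incident with the given multiplicities. Next I would check dominance over $\P^{N}$: a dimension count, or equivalently the surjectivity of the restriction map $\Gamma(\mathcal{O}_{\P^{r}}(d)) \to \Gamma(\mathcal{O}_{\ell \cup \ell'}(d))$ used in the proof of \autoref{lemma:2irreducible}, shows that a general $X$ admits such a configuration of two distinct $k$-incident lines, so the projection to $\P^{N}$ is dominant. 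Irreducibility of the total space together with dominance of the generically finite cover $\varphi$ then forces $G$ to act transitively on ordered pairs of distinct sheets, which is exactly $2$-transitivity.

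The one genuine subtlety is bookkeeping about the marked points: the fibers of $\varphi$ over $[X]$ are the $k$-incident lines themselves (not the marked intersection points), so I must be careful that the correspondence $I''_{k,k}$ parametrizes pairs of distinct \emph{lines} rather than merely distinct pointed data, and that the closures in \autoref{notation1} (taken over configurations where the lines do not intersect) do not introduce spurious components meeting the diagonal. This is handled by the defining convention that the closures are taken over loci where the two lines are disjoint, which keeps us away from the diagonal $\ell = \ell'$, so the open stratum of genuinely distinct pairs is dense and irreducible.

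I expect no serious obstacle here — as the authors note, Step $1$ is ``easy and quite standard,'' and essentially all the work is already contained in \autoref{lemma:2irreducible}. The only thing requiring a moment's attention is translating between the monodromy/Galois-group language and the irreducibility of the two-line incidence variety, together with confirming dominance; the deeper difficulty is deferred to Step $2$ (producing a simple transposition), which this lemma does not address.
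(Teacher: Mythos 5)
Your proposal is correct and follows essentially the same route as the paper: reduce $2$-transitivity to irreducibility of $I_{k}\times_{\P^{N}}I_{k}\setminus\Delta$, and obtain that from the irreducibility of $I''_{k,k}(m_{1},\dots,m_{k}\mid m_{1},\dots,m_{k})$ established in \autoref{lemma:2irreducible}, which dominates the fiber product minus the diagonal. The additional points you raise (dominance over $\P^{N}$, the disjoint-lines convention, forgetting marked points) are sensible bookkeeping that the paper leaves implicit, but they do not change the argument.
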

\begin{proof}
  It suffices to show that the variety $$I_{k}(m_{1}, \dots, m_{k}) \times_{\P^{N}} I_{k}\left( m_{1}, \dots, m_{k} \right) \setminus \Delta$$ is irreducible. (Here $\Delta$ is the diagonal.) 
  By \autoref{lemma:2irreducible} we know that $I_{k,k}''\left( m_{1}, \dots, m_{k} \mid m_{1}, \dots, m_{k} \right)$ is irreducible. Since this correspondence dominates $I_{k}(m_{1}, \dots, m_{k}) \times_{\P^{N}} I_{k}\left( m_{1}, \dots, m_{k} \right) \setminus \Delta$, we conclude the  claimed irreducibility, and hence the corollary. 
\end{proof}

\begin{remark}
  \label{remark:incidentlines}
  We do not have to concern ourselves with incident pairs of lines $\ell, \ell'$ -- the reader can check that a general $X \in \P^{N}$ will not have an intersecting pair of $k$-incident lines by an elementary dimension count. 
\end{remark}

\subsection{In pursuit of a simple transposition} 

\begin{notation}\label{notation3}
Until otherwise specified, we adopt the following notation: 

  \begin{enumerate}
    \item   $(X, \ell) \in I_{k-1}\left( m_{1}+m_{2}, m_{3}, \dots, m_{k} \right)$ is a general point, with $k = d-(2r-2)\ \geq 2$. Furthermore, we assume $m_{1} \neq m_{2}$.

    \item We let $\A^{m_{1} + m_{2}- 1}$ denote the base of the mini-versal deformation $W_{m_{1}+m_{2}}$. 

    \item We let $Z_{m_{1},m_{2}} \subset \A^{m_{1}+m_{2}-1}$ be the locus parametrizing divisors of the form $m_{1}p + m_{2}q$. 

    \item $\pi : \Sigma \to \G\left( 1,r \right)$ denotes the point-line correspondence for $X$.
    \end{enumerate}
\end{notation}

\begin{lemma}
  \label{lemma:Zcurve}
  $Z_{m_{1},m_{2}}$ is a uni-branched curve with a multiplicity two singularity at the origin $0 \in \A^{m_{1}+m_{2}-1}$. 
\end{lemma}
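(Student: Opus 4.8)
Set $n := m_1 + m_2$. The plan is to write down an explicit parametrization of $Z_{m_1,m_2}$ and to read both assertions off the lowest-order terms of the parametrizing functions. Recall from \autoref{remark:dehomogenize} that the base $\A^{n-1}$ of $W_{n}$ parametrizes monic polynomials $z^{n}+a_{n-2}z^{n-2}+\dots+a_{0}$ with vanishing $z^{n-1}$ coefficient, and that a point of $Z_{m_1,m_2}$ is such a polynomial of the form $(z-p)^{m_1}(z-q)^{m_2}$. Since the $z^{n-1}$ coefficient of this product is $-(m_1 p + m_2 q)$, the constraint cutting out $Z_{m_1,m_2}$ is exactly $m_1 p + m_2 q = 0$. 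I would parametrize this line by $t \mapsto (p,q) = (m_2 t, -m_1 t)$, obtaining a morphism
\[
  \gamma : \A^{1} \to \A^{n-1}, \qquad \gamma(t) = \text{coefficient vector of } (z-m_2 t)^{m_1}(z+m_1 t)^{m_2},
\]
whose image is $Z_{m_1,m_2}$ and which sends $0$ to the origin (the polynomial $z^{n}$).

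The first step is to check that $\gamma$ is injective, so that it is the normalization of $Z_{m_1,m_2}$ and the curve is uni-branched at the origin. This is exactly where the hypothesis $m_1 \neq m_2$ is used: from a polynomial $(z-p)^{m_1}(z-q)^{m_2}$ with $p \neq q$ one recovers the ordered pair $(p,q)$ by distinguishing the root of multiplicity $m_1$ from that of multiplicity $m_2$, while $p = q$ forces $t = 0$. Hence $\gamma$ is a bijection onto its image; as $\A^1$ is smooth, this exhibits $\gamma$ as the normalization, and the origin has the single preimage $t=0$, so $Z_{m_1,m_2}$ is uni-branched (in particular reduced and $1$-dimensional) there.

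It remains to compute the multiplicity, which for a uni-branched germ equals $\min_i \on{ord}_t \gamma_i(t)$, the least order of vanishing of the coordinate functions of the normalization. Expanding the product in powers of $t$, the $t^{0}$ term is $z^{n}$ (so $\gamma(0)=0$, i.e. every $a_i(t)$ vanishes at $t=0$) and the $t^{1}$ term is
\[
  -m_1 m_2\, z^{n-1} + m_1 m_2\, z^{n-1} = 0,
\]
the two contributions cancelling; hence every coordinate function vanishes to order $\geq 2$ and the multiplicity is at least $2$. To see it is exactly $2$, I would extract the $t^{2}$ term, whose only contribution lands in the $z^{n-2}$ coefficient, and compute
\[
  a_{n-2}(t) = \Big(\tbinom{m_1}{2}m_2^{2} - m_1^{2}m_2^{2} + \tbinom{m_2}{2}m_1^{2}\Big)\,t^{2} + O(t^{3}) = -\tfrac{1}{2}\, m_1 m_2 (m_1+m_2)\, t^{2} + O(t^{3}),
\]
which is nonzero since $m_1, m_2 \geq 1$. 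Thus $\on{ord}_t a_{n-2} = 2$, and the multiplicity is exactly $2$.

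The only step needing genuine care is this last one: one must confirm that the three second-order contributions do not cancel, so that the multiplicity is pinned at $2$ rather than something larger. The displayed constant $-\tfrac12 m_1 m_2(m_1+m_2)$ shows they do not, and everything else is formal. I note finally that the lemma is intrinsic to the pair $(m_1,m_2)$ and makes no use of the geometry of $X$; the surrounding data $(X,\ell)$ introduced earlier plays no role in its proof.
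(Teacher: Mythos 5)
Your proof is correct and follows essentially the same route as the paper: both parametrize $Z_{m_1,m_2}$ by $t \mapsto (z-m_2t)^{m_1}(z+m_1t)^{m_2}$ and expand in powers of $z$. The paper simply asserts that all coefficients $c_j$ of $t^{j}z^{m_1+m_2-j}$ are nonzero when $m_1 \neq m_2$ and concludes, whereas you supply exactly the two details actually needed --- injectivity of the parametrization (hence uni-branchedness) via the distinguishability of the two roots, and the explicit nonvanishing $c_2 = -\tfrac{1}{2}m_1m_2(m_1+m_2)$ pinning the multiplicity at $2$ --- which if anything makes your version slightly more complete.
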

\begin{proof}
  The family of deformations of $z^{m_{1}+m_{2}} = 0$ parametrized by the set $Z_{m_{1},m_{2}}$ is the image of the map
  \begin{align*}
    t \mapsto (z-m_{2}t)^{m_{1}}(z+m_{1}t)^{m_{2}}.
  \end{align*}
  By expanding this  polynomial in terms of $z$, we obtain an expression
  \begin{align*}
    z^{m_{1}+m_{2}} + c_{2}t^{2}z^{m_{1}+m_{2}-2} + c_{3}t^{3}z^{m_{1}+m_{2}-3} + \dots + c_{m_{1}+m_{2}}t^{m_{1}+m_{2}} 
  \end{align*}
  where, under the assumption that $m_{1} \neq m_{2}$, all coefficients $c_{j}$ are nonzero. The lemma follows. 
\end{proof}

\begin{corollary}
  \label{corollary:multiplicitytwointersect}
  Let $D \subset \A^{m_{1}+m_{2}-1}$ be a divisor given by the vanishing of an equation $f(a_{2}, \dots, a_{m_{1}+m_{2}})=0$, where $f$ is such that $0 \in D$, and such that $\frac{\partial}{\partial a_{2}}f(0) \neq 0$. Further suppose $D$ does not contain $Z_{m_{1},m_{2}}$. 

  Then $$\length (D \cap Z_{m_{1}, m_{2}}) = 2.$$ 
\end{corollary}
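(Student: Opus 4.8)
The plan is to transport the computation of the intersection length onto the normalization of the unibranch curve $Z_{m_1,m_2}$, where it becomes a one-variable order-of-vanishing calculation.

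First I would record the explicit parametrization of $Z_{m_1,m_2}$ already produced in the proof of \autoref{lemma:Zcurve}. Writing the coordinates on $\A^{m_1+m_2-1}$ as $(a_2, \dots, a_{m_1+m_2})$, with $a_j$ the coefficient of $z^{m_1+m_2-j}$, the curve is the image of the map
\[
\nu:\ t \longmapsto \left(c_2 t^2,\, c_3 t^3,\, \dots,\, c_{m_1+m_2}t^{m_1+m_2}\right),
\]
where the $c_j$ are the (all nonzero) coefficients appearing when $(z-m_2t)^{m_1}(z+m_1t)^{m_2}$ is expanded in $z$. Since $t = (c_2/c_3)\,(a_3/a_2)$ holds on the image, $\nu$ is birational onto $Z_{m_1,m_2}$; as $Z_{m_1,m_2}$ is unibranch at $0$ by \autoref{lemma:Zcurve}, $\nu$ is its normalization.

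Next I would invoke the standard length-equals-pullback-order identity for a reduced curve and its normalization. Because $D$ does not contain $Z_{m_1,m_2}$, the function $f$ restricts to a nonzerodivisor on $Z_{m_1,m_2}$ (whose local ring at $0$ is a domain, being analytically irreducible), and a snake-lemma comparison of multiplication-by-$f$ on $\mathcal O_{Z_{m_1,m_2},0}$ with that on its normalization $k[[t]]$ (the finite-length correction terms cancel, kernel and cokernel of an endomorphism of a finite-length module having equal length) yields
\[
\length\!\left(D \cap Z_{m_1,m_2}\right) = \on{ord}_{t=0}\!\left(\nu^* f\right),
\]
the length being taken at the origin.

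Finally I would read off the order from the Taylor expansion of $f$ at $0$. Writing $f = \sum_j \partial_{a_j}f(0)\,a_j + (\text{terms of order} \geq 2 \text{ in the } a_j)$ and using $f(0)=0$, substitution of $a_j = c_j t^j$ sends each coordinate to a series of $t$-order $\geq 2$; hence the linear part of $f$ contributes in $t$-order $\geq 2$ and the higher-order part in $t$-order $\geq 4$. The unique $t^2$ contribution is therefore $\partial_{a_2}f(0)\,c_2\,t^2$, whose coefficient is nonzero exactly because $\partial_{a_2}f(0) \neq 0$ by hypothesis and $c_2 \neq 0$ by \autoref{lemma:Zcurve}. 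Thus $\on{ord}_{t=0}(\nu^* f) = 2$, proving the claim. The only delicate points are the bookkeeping in this last step — confirming that no $t^0$ or $t^1$ term survives and that the $t^2$ coefficient is genuinely nonzero — together with the correct citation of the unibranch length identity; everything else is formal.
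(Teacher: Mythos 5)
Your argument is correct and is exactly the route the paper intends: its proof of this corollary is the one-line remark that the claim ``follows from the parametrization of $Z_{m_{1},m_{2}}$'' from \autoref{lemma:Zcurve}, and your computation --- pulling $f$ back along $t \mapsto (c_{2}t^{2}, \dots, c_{m_{1}+m_{2}}t^{m_{1}+m_{2}})$, identifying this with the normalization of the unibranch curve so that the local length equals $\on{ord}_{t=0}(\nu^{*}f)$, and reading off order exactly $2$ from $\partial_{a_{2}}f(0)\,c_{2} \neq 0$ --- is precisely the omitted verification. No gaps.
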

\begin{proof}
  This easily follows from the parametrization of $Z_{m_{1},m_{2}}$ found in the  proof of \autoref{lemma:Zcurve}.
\end{proof}

\begin{proposition}
  \label{proposition:transversefamily}
  Let $(X,\ell) \in I_{k-1}(m_{1}+m_{2}, m_{3}, \dots, m_{k})$ be a general point, and let $\pi: \Sigma \to \G\left( 1,r \right)$ be the point-line correspondence. 

  Denote by $$\nu: (U, [\ell]) \to (\A^{m_{1}+m_{2}-1} \times \A^{m_{3}-1} \times \dots \A^{m_{k}-1},0)$$ a local map to the mini-versal deformation space where $U \subset \G\left( 1,r \right)$ is a sufficiently small \'etale neighborhood of $\left[ \ell \right]$. Then
  \begin{enumerate}
    \item The map $\nu$ is an immersion near $[\ell]$. 
    \item The image $\nu(U)$ is transverse to $Z_{m_{1},m_{2}} \subset (\A^{m_{1}+m_{2}-1}, 0, \dots, 0)$ in the sense of \autoref{corollary:multiplicitytwointersect}.
  \end{enumerate}
\end{proposition}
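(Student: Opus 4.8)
The plan is to compute the differential of $\nu$ at $[\ell]$ explicitly and identify it with the multiplication-then-restriction map of \autoref{lemma:transversetothing}. By \autoref{lemma:firstorderG} and \autoref{remark:affine}, the first-order deformation of the divisor $Z \to G$ at $[\ell]$ is governed by the family $p(z) + \sum_{j=0}^{r-2}(c_{r-1,j}z + c_{r,j})g_j(z)$, where $p(z) = \prod_{i}(z-\alpha_i)^{M_i}$ is the dehomogenization of $P$ for the merged multiplicity vector $(M_1, \dots, M_{k-1}) = (m_1+m_2, m_3, \dots, m_k)$. Exactly as in the proof of \autoref{theorem:versal}, passing to the mini-versal unfolding $\prod_i W_{M_i}$ at the $k-1$ points identifies $d\nu$, under the natural identifications $T_{[\ell]}G \cong P_1 \otimes V_{r-1}$ (with $V_{r-1} = \Span\{g_0, \dots, g_{r-2}\}$) and $T_0(\prod_i \A^{M_i - 1}) \cong \bigoplus_i k[z]/(z-\alpha_i)^{M_i-1}$, with the multiplication-then-restriction map
\[
\rho': P_1 \otimes V_{r-1} \to \bigoplus_{i=1}^{k-1} k[z]/(z-\alpha_i)^{M_i - 1}.
\]
Since $\sum_i (M_i - 1) = 2r - 1$, the target has dimension one more than the source, and setting $n_1 = M_1 - 1$, $n_i = M_i$ for $i \geq 2$ presents $\rho'$ precisely as the map of \autoref{lemma:transversetothing}. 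Because $(X,\ell)$ is general, the constraint defining $I_{k-1}$ only fixes $F|_\ell = P$ and leaves the $G_j$ free, so the $g_j$ span a general $V_{r-1}$ and by \autoref{remark:openproperty} the conclusions of that lemma apply. Part (1) is then immediate: \autoref{lemma:transversetothing}(1) says $\rho' = d\nu$ is injective, so $\nu$ is an immersion at $[\ell]$.

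For part (2), I would first identify the tangent direction of $Z_{m_1,m_2}$ at the origin. Expanding the parametrization $t \mapsto (u - m_2 t)^{m_1}(u + m_1 t)^{m_2}$ of \autoref{lemma:Zcurve} (with $u = z - \alpha_1$), the linear term in $t$ vanishes and the leading term is a nonzero multiple of $t^2 u^{M_1 - 2}$; thus the tangent cone of the uni-branched curve $Z_{m_1,m_2}$ points in the $u^{M_1-2} = (z-\alpha_1)^{M_1 - 2}$ direction, which is exactly the coordinate $a_2$ of \autoref{corollary:multiplicitytwointersect} and the distinguished element $w = ((z-\alpha_1)^{n_1-1}, 0, \dots)$ of \autoref{lemma:transversetothing}.

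Since $\nu$ is an immersion, $\nu(U)$ is locally a smooth hypersurface through $0$, say $\{\tilde f = 0\}$, with $T_0\nu(U) = \im(\rho')$; restricting to the first factor defines the divisor $D = \nu(U)\cap \A^{M_1-1} = \{f = 0\}$. Now $\partial_{a_2}f(0) = (d\tilde f)_0(w)$, which is nonzero precisely because \autoref{lemma:transversetothing}(2) guarantees $w \notin \im(\rho') = \ker (d\tilde f)_0$. This also forces $D \not\supset Z_{m_1,m_2}$, since a smooth $D$ containing $Z$ would have to contain the tangent cone direction $w \in T_0 D$. The hypotheses of \autoref{corollary:multiplicitytwointersect} are therefore met, and one concludes $\length(\nu(U) \cap Z_{m_1,m_2}) = \length(D \cap Z_{m_1,m_2}) = 2$, which is the asserted transversality.

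The main obstacle is bookkeeping rather than geometry: correctly matching the merged multiplicity data to the hypotheses of \autoref{lemma:transversetothing} via the index shift $n_1 = m_1+m_2-1$, $n_i = m_{i+1}$, and pinning down that the distinguished element $w$ there is precisely the tangent cone direction of $Z_{m_1,m_2}$ extracted from \autoref{lemma:Zcurve}. Once these identifications are in place — together with the fact, as in \autoref{theorem:versal}, that the differential of the classifying map $\nu$ is literally the multiplication-then-restriction map — both claims reduce to the two properties of $\rho'$ already established.
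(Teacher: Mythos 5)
Your proof is correct and follows exactly the route the paper intends: the paper's own proof is just the two-sentence remark that both claims follow at the level of tangent spaces from points (1) and (2) of \autoref{lemma:transversetothing}. You have simply supplied the details — identifying $d\nu$ with $\rho'$ via the merged multiplicity bookkeeping and recognizing $w$ as the tangent-cone direction of $Z_{m_{1},m_{2}}$ — all of which the paper leaves implicit.
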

\begin{proof}
  The two statements are proved at the level of tangent spaces.  They follow, respectively, by applying points $1$ and $2$ in \autoref{lemma:transversetothing}.
\end{proof}

\subsubsection{``No unintended consequences'' lemma} Now suppose 
\begin{align*}
  (X,\ell,\ell') \in I_{k-1,k}(m_{1}+m_{2}, m_{3}, \dots, m_{k} \mid m_{1}, \dots, m_{k})
\end{align*}
 is a general point. The next lemma and corollary show that the condition of possessing the $k-1$-incident line $\ell$ has no effect on the $k$-incident line $\ell'$ of $X$. Hence, there are in general ``no unintended consequences elsewhere'' for having a $k-1$-incident line. 

 \begin{lemma}
   \label{lemma:nocommunication}
   The point-line correspondence 
   \begin{align*}
     \pi : \Sigma \to \G\left( 1,r \right)
   \end{align*}
   is mini-versal near $[\ell'] \in \G\left( 1,r \right)$. 
 \end{lemma}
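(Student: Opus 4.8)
The plan is to reduce the assertion to the versality criterion already established in \autoref{theorem:versal}, and then to show that the extra incidence condition along $\ell$ places no constraint on the first-order data that governs versality at $\ell'$. First I would record that versality and mini-versality coincide here: since $\ell'$ is $k$-incident of type $(m_1,\dots,m_k)$ with $\sum_i m_i = d$ and $k = d-(2r-2)$, we have $\sum_i (m_i-1) = 2r-2 = \dim \G(1,r)$, so the local map $\nu'$ to the mini-versal base $\prod_i \A^{m_i-1}$ has source and target of equal dimension. Hence $d\nu'$ is surjective if and only if it is an isomorphism, and versality at $\ell'$ is equivalent to mini-versality. By \autoref{lemma:firstorderG} and the computation in the proof of \autoref{theorem:versal}, $d\nu'$ is identified with the multiplication-then-restriction map $\rho'$ attached to the subspace $V'_{r-1} = \Span(g'_0,\dots,g'_{r-2}) \subset P_{d-1}$, where the $g'_j$ are the restrictions to $\ell'$ of the coefficients $G'_j$ in an expansion $F = P' + \sum_{j} x_j G'_j$ adapted to $\ell'$. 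Thus everything reduces to showing that $\rho'$ is an isomorphism.

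Next, by \autoref{lemma:2irreducible} the correspondence $I''_{k-1,k}(\dots)$ is irreducible, and by \autoref{remark:openproperty} the condition that $\rho'$ be an isomorphism is open on it. It therefore suffices to exhibit a single point $(X,\ell,\ell') \in I''_{k-1,k}(\dots)$ at which $\rho'$ is an isomorphism, and this is where I use disjointness of $\ell$ and $\ell'$. Fix disjoint lines $\ell, \ell'$ with distinct marked points and the prescribed multiplicities. The first infinitesimal neighborhoods $\ell^{(1)}$ and $\ell'^{(1)}$ are then disjoint, so $\Gamma(O_{\ell^{(1)} \sqcup \ell'^{(1)}}(d)) = \Gamma(O_{\ell^{(1)}}(d)) \oplus \Gamma(O_{\ell'^{(1)}}(d))$, and — for $d$ in our range — the restriction map $\Gamma(O_{\P^r}(d)) \to \Gamma(O_{\ell^{(1)} \sqcup \ell'^{(1)}}(d))$ is surjective. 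Consequently I can choose $X$ whose jet along $\ell$ makes $\ell$ a $(k-1)$-incident line of type $(m_1+m_2,m_3,\dots,m_k)$, while prescribing its $1$-jet along $\ell'$ independently so that $\ell'$ is $k$-incident of type $(m_1,\dots,m_k)$ and the resulting $V'_{r-1} \subset P_{d-1}$ is a general $(r-1)$-dimensional subspace. Indeed, $\Gamma(O_{\ell'^{(1)}}(d))$ splits as $\Gamma(O_{\ell'}(d)) \oplus \Gamma(N^{\vee}_{\ell'}(d))$, the first summand prescribing $F|_{\ell'} = P'$ (hence the points $\beta_i$ and multiplicities) and the second prescribing the normal data $g'_0,\dots,g'_{r-2}$ freely. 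For such a general $V'_{r-1}$, \autoref{lemma:surjection} (with \autoref{remark:openproperty}) makes $\rho'$ an isomorphism, so versality — hence mini-versality — holds at $\ell'$ for this $X$, which completes the argument.

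The main obstacle is precisely the independence claim — the ``no unintended consequences'' content — namely that imposing the $(k-1)$-incident structure along $\ell$ leaves the first-order normal data of $X$ along $\ell'$, and therefore the subspace $V'_{r-1}$, entirely unconstrained. This hinges on the surjectivity of $\Gamma(O_{\P^r}(d)) \to \Gamma(O_{\ell^{(1)} \sqcup \ell'^{(1)}}(d))$, which I would verify through the standard vanishing $H^1(\mathcal{I}_{\ell^{(1)} \sqcup \ell'^{(1)}}(d)) = 0$ for two disjoint lines and their first neighborhoods; this holds comfortably in our range $d \geq 2r$. Everything else is the formal openness-plus-irreducibility reduction, together with the dimension count that collapses versality and mini-versality into a single condition.
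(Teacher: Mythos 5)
Your proposal is correct and takes essentially the same approach as the paper: both arguments rest on the observation that, because $\ell$ and $\ell'$ are disjoint, the first-order jet of $F$ along $\ell'$ (equivalently the subspace $V_{r-1}'$ feeding into \autoref{lemma:surjection}) can be prescribed generically and independently of the $(k-1)$-incidence condition along $\ell$, after which versality at $\ell'$ follows from the criterion of \autoref{theorem:versal} together with irreducibility of $I''_{k-1,k}$ and openness. The only cosmetic difference is that you justify the independence by the vanishing $H^1(\mathcal{I}_{\ell^{(1)}\sqcup \ell'^{(1)}}(d))=0$, whereas the paper exhibits it directly by writing $F$ in coordinates adapted to the ideal of $\ell\cup\ell'$.
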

 \begin{proof}
   After using the $\PGL_{r+1}$-action, we may assume 
   \begin{align*}
     \ell = [x_{0}:x_{1}]\\
     \ell' = [x_{2}:x_{3}]
   \end{align*}
   and therefore, that the homogeneous ideal of the union $\ell \cup \ell'$ is 
   \begin{align*}
     I_{\ell \cup \ell'} = (x_{0}x_{2}, x_{1}x_{2}, x_{0}x_{3}, x_{1}x_{3}, x_{4}, x_{5}, \dots x_{r}).
   \end{align*}
   
   Therefore, we may write the defining equation of $X$ in the form 
   \begin{align*}
     F = p(x_{0},x_{1}) + q(x_{2},x_{3}) + \sum_{i=0,1 ; j=2,3}^{}x_{i}x_{j}A_{i,j} + \sum_{i=4}^{r}x_{i}G_{i}
   \end{align*}
   where $A_{i,j} \in \Gamma (O_{\P^{r}}(d-2))$, and $G_{i} \in \Gamma (O_{\P^{r}}(d-1))$. 

   We let $a_{i,j} = A_{i,j}|_{\ell'}$, and $g_{i} = G_{i}|_{\ell'}$.

   As in the proof of \autoref{lemma:firstorderG}, we apply the infinitesimal substitutions 
\begin{align*}
  x_{0}& \mapsto x_{0}+\varepsilon \ell_{0}\\
  x_{1}& \mapsto x_{1} + \varepsilon \ell_{1}\\
  x_{2}& \mapsto x_{2}\\ 
  x_{3}& \mapsto x_{3}\\
  x_{4}& \mapsto x_{4} + \varepsilon \ell_{4}\\
  \vdots& \\
  x_{r} &\mapsto x_{r} + \varepsilon \ell_{r}
\end{align*}
and then mod out by the ideal $I_{\ell'} = (x_{0}, x_{1}, x_{4}, \dots, x_{r})$. (Here $\ell_{i}$ are linear forms in the variables/$x_{2}$ and $x_{3}$.) 

The result is the infinitesimal family 
\begin{align*}
  f_{\varepsilon} = q(x_{2},x_{3}) + \varepsilon \ell_{0}(x_{2}a_{0,2} + x_{3}a_{0,3}) + \varepsilon \ell_{1}(x_{2}a_{1,2} + x_{3}a_{1,3}) + \sum_{i = 4}^{r}\varepsilon \ell_{i} g_{i}.
\end{align*}

Set $g_{0} := x_{2}a_{0,2}+x_{3}a_{0,3}$ and $g_{1} := x_{2}a_{1,2} + x_{3}a_{1,3}$. Then, general choices of the four polynomials  $a_{i,j}$ yield general choices of $g_{0}$ and $g_{1}$. 

These observations tell us that the first-order analysis of $\pi: \Sigma \to \G\left( 1,r \right)$ near $[\ell']$ is exactly the same as in the case for a general point $(X,\ell') \in I_{k}(m_{1}, \dots m_{k})$.  In particular, since $g_{0}, g_{1}, g_{4}, \dots, g_{r}$ are taken to be generic, $\pi$ is mini-versal near $[\ell']$. 

 \end{proof}
 
 \begin{corollary}
   \label{corollary:nostrangecomm}
   Let $(X,\ell) \in I_{k-1}(m_{1}+m_{2}, m_{3}, \dots, m_{k})$ be a general point. Then the point-line correspondence 
   \begin{align*}
     \pi : \Sigma \to \G\left( 1,r \right)
   \end{align*}
   is mini-versal at all $k$-incident lines $[\ell'] \in \G\left( 1,r \right)$.
 \end{corollary}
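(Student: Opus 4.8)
The plan is to promote \autoref{lemma:nocommunication}, which controls a \emph{general} point of the two-line correspondence, into a statement about \emph{every} $k$-incident line lying over a general point of $I_{k-1}(m_{1}+m_{2},m_{3},\dots,m_{k})$. The guiding observation is that mini-versality of $\pi\colon\Sigma\to\G(1,r)$ at a line $[\ell']$ is an open condition depending only on the pair $(X,\ell')$, and not on $\ell$; so for each multiplicity type of $\ell'$ it cuts out a closed ``bad locus'' upstairs, which \autoref{lemma:nocommunication} shows to be proper.

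First I would fix a multiplicity vector $(n_{1},\dots,n_{k})$ with $\sum_{i}n_{i}=d$ (so automatically $\sum_{i}(n_{i}-1)=2r-2$) and form the variety
\[
  I'' = I''_{k-1,k}(m_{1}+m_{2},m_{3},\dots,m_{k}\mid n_{1},\dots,n_{k}),
\]
which is irreducible by \autoref{lemma:2irreducible}. Let $B\subset I''$ be the locus of points at which $\pi$ fails to be mini-versal at $\ell'$; it is closed, and since the argument of \autoref{lemma:nocommunication} is insensitive to the multiplicity type of $\ell'$, the general point of $I''$ lies outside $B$. Hence $B$ is a proper closed subset of the irreducible $I''$, and $\dim B<\dim I''$.

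Next I would project along $p\colon I''\to I_{k-1}(m_{1}+m_{2},m_{3},\dots,m_{k})$, remembering $(X,\ell)$ and forgetting $\ell'$ and all marked points. Its fibers are finite, because a general such $X$ carries only finitely many $k$-incident lines; thus $\dim I''\le\dim I_{k-1}$ and therefore $\dim\overline{p(B)}\le\dim B<\dim I_{k-1}$. Since $I_{k-1}$ is irreducible (\autoref{lemma:Ikirreducible}), $\overline{p(B)}$ is a proper closed subset. Taking the union over the finitely many admissible vectors $(n_{1},\dots,n_{k})$ produces a proper closed subset of $I_{k-1}$ whose complement $U$ is dense and open. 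For $(X,\ell)\in U$ the entire fiber $p^{-1}(X,\ell)$ avoids every $B$; equivalently, every $k$-incident line $[\ell']$ of $X$ is a mini-versal point of $\pi$, which is the assertion.

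I expect the main obstacle to be justifying that $p(B)$ is proper rather than all of $I_{k-1}$. One cannot deduce this from \autoref{theorem:versal} alone: although that theorem makes the non-mini-versal locus $B_{0}\subset I_{k}(n_{1},\dots,n_{k})$ proper, the other projection $q\colon I''\to I_{k}(n_{1},\dots,n_{k})$ is \emph{not} dominant, so a priori the pullback $q^{-1}(B_{0})=B$ could exhaust $I''$. The genuine input is \autoref{lemma:nocommunication}, which shows $B\subsetneq I''$ directly --- this is precisely the ``no unintended consequences'' phenomenon. The only remaining bookkeeping is to confirm the finiteness of the fibers of $p$, i.e.\ that a general $(X,\ell)\in I_{k-1}$ has $X$ general enough, as a member of the divisor it sweeps out in $\P^{N}$, for the finiteness of \autoref{proposition:finitek-incident} to persist.
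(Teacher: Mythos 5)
Your proposal is correct and follows essentially the same route as the paper's own (much terser) proof: irreducibility of $I''_{k-1,k}$ from \autoref{lemma:2irreducible}, the ``no unintended consequences'' lemma to make the non-mini-versal locus a proper closed subset, and generic finiteness of the projection to $I_{k-1}$ to push that properness down. Your added care --- taking the union over all multiplicity vectors $(n_1,\dots,n_k)$ for $\ell'$ and observing that the proof of \autoref{lemma:nocommunication} is insensitive to that type --- is a detail the paper leaves implicit, since its proof only names the single correspondence with $\ell'$ of type $(m_1,\dots,m_k)$.
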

 \begin{proof}
   Indeed, the variety $I''_{k-1, k}(m_{1}+m_{2}, m_{3}, \dots, m_{k} \mid m_{1}, m_{2}, \dots, m_{k})$ is irreducible, and generically finite over $I_{k-1}(m_{1}+m_{2}, m_{3}, \dots, m_{k})$. The previous lemma shows that there is a Zariski open subset of $I_{k-1,k}$ with $\ell'$ versal. The corollary follows.
 \end{proof}

 \begin{lemma}
   \label{lemma:onlyonespecialline}
   Let $(X,\ell) \in I_{k-1}(n_{1}, \dots, n_{k-1})$ be a general point. Then $X$ does not possess any other $k-1$-incident lines.
 \end{lemma}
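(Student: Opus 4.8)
The plan is to prove the lemma by a dimension count, comparing the family of pairs of $(k-1)$-incident lines against the family $I_{k-1}(n_1,\dots,n_{k-1})$ itself. The underlying principle is that acquiring one $(k-1)$-incident line is a codimension-one condition on $X$, whereas acquiring two is codimension two, so a general hypersurface carrying one such line cannot carry a second.

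First I would compute $\dim I_{k-1}(n_1,\dots,n_{k-1})$. Projecting to the data $(\ell,p_1,\dots,p_{k-1})$ exhibits $I_{k-1}$, through its cover $I'_{k-1}$, as fibered over the $(2r-2)+(k-1)$-dimensional space of a line with $k-1$ marked points, the fiber being the projective space of those $X$ whose restriction to $\ell$ equals the prescribed form $\prod_i (z-\alpha_i)^{n_i}$ up to scale. Since $\Gamma(\so_{\P^r}(d)) \to \Gamma(\so_\ell(d))$ surjects onto a $(d+1)$-dimensional space, this fiber has dimension $N-d$. Using $k-1 = d-(2r-2)-1$, the total is $(2r-2)+(k-1)+(N-d) = N-1$; and since $I_{k-1}$ is irreducible by \autoref{lemma:Ikirreducible}, its image in $\P^N$ is a divisor.

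Next, for every multiplicity type $(n'_1,\dots,n'_{k-1})$ with $\sum n'_i = d$, I would bound $\dim I''_{k-1,k-1}(n_1,\dots,n_{k-1}\mid n'_1,\dots,n'_{k-1})$. Projecting to the disjoint pair of lines with their marked points gives a base of dimension $2(2r-2)+2(k-1)$, and the fiber over fixed data is the projective space of $X$ whose restrictions to $\ell$ and $\ell'$ are the two prescribed forms up to scale. The essential input here is the surjectivity of $\Gamma(\so_{\P^r}(d)) \to \Gamma(\so_{\ell\cup\ell'}(d))$ for disjoint lines, already used in \autoref{lemma:2irreducible}, which makes this fiber have dimension $N-2d$. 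Adding up gives $2(2r-2)+2(k-1)+(N-2d) = N-2$, independent of the types $(n),(n')$. As there are only finitely many partitions, the union over all $(n')$ of these correspondences still has dimension $N-2$. I would then assemble the comparison: any $(X,\ell)\in I_{k-1}(n_1,\dots,n_{k-1})$ possessing a second $(k-1)$-incident line $\ell'\neq \ell$ produces a point of some $I''_{k-1,k-1}(n\mid n')$, so the locus of such $(X,\ell)$ is the image of the forgetful map from the union of these $I''$, of dimension at most $N-2$. An intersecting second line $\ell'$ is subsumed by an even smaller family, handled by the elementary count indicated in \autoref{remark:incidentlines}. Since $N-2 < N-1 = \dim I_{k-1}(n_1,\dots,n_{k-1})$, this bad locus is a proper closed subset, and a general $(X,\ell)$ avoids it.

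I expect the only delicate point to be the fiber-dimension computation for the two-line family, namely confirming that imposing the prescribed restriction along both $\ell$ and $\ell'$ genuinely cuts the expected codimension $2d$; this rests entirely on the surjectivity of restriction to $\ell\cup\ell'$. The remaining bookkeeping and the treatment of the intersecting-lines case are routine.
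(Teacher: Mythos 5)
Your proposal is correct and follows exactly the route the paper intends: the paper's own proof is simply ``this follows from a simple dimension count -- we leave it to the reader,'' and your computation ($\dim I_{k-1} = N-1$ versus the $N-2$-dimensional two-line correspondences $I''_{k-1,k-1}$, using surjectivity of restriction to $\ell \cup \ell'$) is precisely that count, carried out in full. The only point deserving a word more than you give it is the intersecting-lines case, where the paper's \autoref{remark:incidentlines} treats $k$-incident rather than $(k-1)$-incident pairs, but the analogous count (base of intersecting pairs has dimension $3r-2 < 2(2r-2)$ for $r \geq 2$, and restriction to a pair of incident lines is still surjective) goes through the same way.
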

 \begin{proof}
   This follows from a simple dimension count -- we leave it to the reader.
 \end{proof}

 \begin{theorem}
   \label{proposition:simpletransposition}
   The monodromy group $G$ contains a simple transposition.
 \end{theorem}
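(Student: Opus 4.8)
The plan is to realize a simple transposition as the \emph{local monodromy} of $\varphi$ around a general point of a branch divisor. I take this divisor to be the image $\mathcal{B}\subset\P^{N}$ of $I_{k-1}(m_{1}+m_{2},m_{3},\dots,m_{k})$ under projection to $\P^{N}$. A dimension count (having such a $(k-1)$-incident line is exactly one more condition than $k$-incidence) shows $\mathcal{B}$ is a divisor, and by \autoref{lemma:onlyonespecialline} a general $X_{0}\in\mathcal{B}$ carries a \emph{unique} $(k-1)$-incident line $\ell$, with $X_{0}\cap\ell=(m_{1}+m_{2})p_{0}+m_{3}p_{3}+\dots+m_{k}p_{k}$; away from $\ell$ the hypersurface $X_{0}$ is general and carries the usual finite set of honest $k$-incident lines of type $(m_{1},\dots,m_{k})$. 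I will show that a small loop in $\P^{N}$ about $X_{0}$ transposes exactly two sheets of $\varphi$ near $\ell$ and fixes every other sheet.

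For the local picture near $\ell$ I use the mini-versal model. Write $T:=\A^{m_{1}+m_{2}-1}\times\prod_{i\ge 3}\A^{m_{i}-1}$, a space of dimension $2r-1$, and let $Z:=Z_{m_{1},m_{2}}\times\{0\}^{k-2}\subset T$ be the locus recording divisors $m_{1}p+m_{2}q+m_{3}p_{3}+\dots+m_{k}p_{k}$; thus the $k$-incident lines of type $(m_{1},\dots,m_{k})$ near $\ell$ are exactly the preimages of $Z$ under the local classifying maps $\nu_{X}$ of \autoref{proposition:transversefamily}. By \autoref{lemma:Zcurve} the curve $Z$ is uni-branched with a multiplicity-two singularity at the origin, normalized by $t\mapsto(c_{2}t^{2},c_{3}t^{3},\dots)$ with all $c_{j}\neq 0$ (here $m_{1}\neq m_{2}$ is essential, so that $t\mapsto -t$ genuinely exchanges the two points $p,q$). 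Choose a general analytic arc $(X_{s})$, $s$ in a small disk, through $X_{0}$ meeting $\mathcal{B}$ transversally, and assemble $N\colon U\times(\text{disk})\to T$, $N(\ell',s)=\nu_{X_{s}}(\ell')$. The source and target both have dimension $2r-1$, and since varying $X$ already surjects onto the versal directions (the content of \autoref{corollary:versalkincident}), a general arc makes $N$ a local isomorphism near $([\ell],0)$. Hence $N^{-1}(Z)\cong Z$, and the arc parameter pulls back to a function $s(t)$ on the normalization. Because $\nu_{X_{0}}(U)=N(U\times\{0\})$ is transverse to $Z$ in the sense of \autoref{corollary:multiplicitytwointersect} (this is \autoref{proposition:transversefamily}, built on \autoref{lemma:transversetothing}), the defining function of $\{s=0\}$ has nonzero $a_{2}$-derivative at the origin, and therefore $s(t)=\lambda t^{2}(1+O(t))$ with $\lambda\neq 0$.

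The relation $s=\lambda t^{2}(1+O(t))$ is the whole point: for small $s\neq 0$ it has two solutions $t=\pm\sqrt{s/\lambda}\,(1+\cdots)$, giving two \emph{distinct} $k$-incident lines of $X_{s}$ near $\ell$, and the loop $s\mapsto e^{2\pi i\theta}s$ interchanges the branches $t\leftrightarrow -t$, hence swaps these two lines (as distinct points of $I_{k}(m_{1},\dots,m_{k})$, since the parametrized scheme is unordered). It remains to check that no other sheet moves. Every other $k$-incident line $\ell'$ of $X_{0}$ is mini-versal by \autoref{corollary:nostrangecomm}, so $\varphi$ is \'etale there and the corresponding sheet deforms uniquely with $X_{s}$ and returns to itself; and by \autoref{lemma:onlyonespecialline} there is no second $(k-1)$-incident line where a further collision could occur. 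Thus the monodromy of the loop is precisely the transposition of the two lines colliding at $\ell$, so $G$ contains a simple transposition.

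The main obstacle is the analysis of the second paragraph: rigorously pinning down the analytic model of $\varphi$ at the \emph{boundary} point $(X_{0},\ell)$, where two marked intersection points have collided, and confirming that the branching there is exactly of type $A_{1}$ rather than something more degenerate. This rests on two independent inputs: the uni-branched, multiplicity-two geometry of $Z_{m_{1},m_{2}}$ together with the nontriviality of the $t\mapsto -t$ exchange (which forces the hypothesis $m_{1}\neq m_{2}$ of \autoref{notation3}), and the transversality of the $X$-variation to $\nu_{X_{0}}(U)$, which is what makes $\lambda\neq 0$ and prevents the double cover from being trivial. One must also verify that the arc can be chosen so that $N$ is a local isomorphism; this is a genericity statement for $X_{0}\in\mathcal{B}$ and for the arc direction, justified by the fact that the $X$-deformations already cover the versal directions.
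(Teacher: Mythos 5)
Your proposal is correct and follows essentially the same route as the paper: degenerate to a general point of $I_{k-1}(m_{1}+m_{2},m_{3},\dots,m_{k})$, use \autoref{proposition:transversefamily} and \autoref{corollary:multiplicitytwointersect} to see that $[\ell]$ is the unique length-two point of the fiber, use \autoref{corollary:nostrangecomm} and \autoref{lemma:onlyonespecialline} to fix all other sheets, and use the uni-branched (irreducible) geometry of $Z_{m_{1},m_{2}}$ to see the two colliding lines are exchanged. You simply make the local model $s=\lambda t^{2}(1+O(t))$ explicit where the paper leaves it implicit; the one loose attribution (citing \autoref{corollary:versalkincident} for surjectivity of the $X$-variation onto the versal directions at the $(k-1)$-incident line) is harmless, since that surjectivity follows directly from the surjectivity of restriction of degree-$d$ forms to $\ell$.
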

 \begin{proof}
   
   Let $(X, \ell) \in I_{k-1}(m_{1}+m_{2}, \dots, m_{k})$ be a general point. 

   All $k$-incident lines of multiplicity $(m_{1}, \dots, m_{k})$ are simple, i.e. occur with multiplicity one, according to \autoref{corollary:nostrangecomm}.  The line $[\ell]$ occurs as a multiplicity $2$ $k$-incident line of type $(m_{1}, \dots, m_{k})$, according to \autoref{corollary:multiplicitytwointersect}.  

   Therefore, above the point $[X] \in \P^{N}$, the correspondence $I_{k}(m_{1}, \dots, m_{k})$ has exactly one nonreduced point of length $2$, and all other points are reduced.  

   Since the curve $Z_{m_{1},m_{2}} \subset \A^{m_{1}+m_{2}-1}$ is irreducible, the two nearby $k$-incident lines (for a nearby hypersurface $X'$) limiting to $[\ell]$ experience monodromy.

   Hence we obtain a simple transposition in the monodromy group $G$. 
 \end{proof}

 \subsubsection{Moduli of lines meeting $X$ infrequently}

\begin{definition}
  \label{definition:cross-ratios-map}
  Let $k \geq 4$. We define $i_{(m_{1}, \dots, m_{k})}: I_{k}(m_{1}, \dots , m_{k}) \dashrightarrow \m_{0,k}/S_{k}$ to be the  map sending $(X,\ell)$ to $\left[ \ell \cap_{set} X \subset \ell \right]$. Here , subscript ``set'' means ``set theoretic''.
\end{definition}

\begin{corollary}
  \label{corollary:distinct-cross-ratios}
  Assume $k = d- (2r-2) \geq 4$, and let $X$ be a general degree $d$ hypersurface. Then any two distinct $(X,\ell), (X,\ell') \in I_{k}(m_{1}, \dots, m_{k})$ have distinct images under $i_{(m_{1},\dots,m_{k})}$. 
\end{corollary}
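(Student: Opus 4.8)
The plan is to prove the statement by an irreducibility argument on the doubled incidence correspondence, showing that the locus of pairs of distinct $k$-incident lines whose configurations have \emph{coincident} cross-ratio is a proper subvariety, so that a general $X$ avoids its image in $\P^{N}$. No new monodromy input is needed beyond the irreducibility in \autoref{lemma:2irreducible}.

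First I would fix the multiplicity vector $(m_{1},\dots,m_{k})$ and work with
\[
  I'' := I''_{k,k}(m_{1},\dots,m_{k} \mid m_{1},\dots,m_{k}),
\]
which is irreducible by \autoref{lemma:2irreducible}. A point of $I''$ records $X$ together with two disjoint $k$-incident lines $\ell,\ell'$ of type $(m_{1},\dots,m_{k})$ and their marked intersection points $p_{1},\dots,p_{k}\in\ell$, $q_{1},\dots,q_{k}\in\ell'$. As noted in the proof of \autoref{corollary:versalkincident}, $I_{k}(m_{1},\dots,m_{k})$ maps generically finitely onto $\P^{N}$; hence, using \autoref{proposition:finitek-incident} and adding markings, $I''$ is generically finite over $\P^{N}$ and $\dm I'' = N$. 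On $I''$ there are two cross-ratio functions, $\operatorname{cr}_{1}$ sending a point to the configuration of the $p_{i}$ on $\ell\cong\P^{1}$ and $\operatorname{cr}_{2}$ sending it to the configuration of the $q_{i}$ on $\ell'\cong\P^{1}$, both valued in $\m_{0,k}/S_{k}$; since the $p_{i}$ (resp.\ $q_{i}$) are exactly the set-theoretic points of $\ell\cap X$ (resp.\ $\ell'\cap X$), these are the two compositions of $i_{(m_{1},\dots,m_{k})}$ with the forgetful maps $I''\to I_{k}$ remembering $\ell$, resp.\ $\ell'$.

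Next I would show that the coincidence locus $E := \{\operatorname{cr}_{1}=\operatorname{cr}_{2}\}\subset I''$ is a \emph{proper} closed subvariety. Here I use the structure from the proof of \autoref{lemma:2irreducible}: projecting $I''$ to the configuration data $(\ell,p_{1},\dots,p_{k},\ell',q_{1},\dots,q_{k})$ realizes it as a projective bundle over the parameter space $\mathcal{B}$ of such tuples. Because $\ell$ and $\ell'$ are disjoint, the $p_{i}$ on $\ell$ and the $q_{i}$ on $\ell'$ may be chosen independently, so the map $\mathcal{B}\to(\m_{0,k}/S_{k})\times(\m_{0,k}/S_{k})$, $\beta\mapsto(\operatorname{cr}_{1}(\beta),\operatorname{cr}_{2}(\beta))$, is dominant. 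Since $k\geq 4$, the factor $\m_{0,k}/S_{k}$ has positive dimension and its diagonal is a proper subvariety; pulling back, $E$ is proper in the irreducible $I''$. This is the one step where $k\geq 4$ is essential, and it is the \textbf{main obstacle}: it amounts to genuinely producing distinct $k$-incident lines with different configurations (for $k=3$ every triple of distinct points of $\P^{1}$ is projectively equivalent and the statement is false). Everything else is formal; fortunately the projective-bundle description of $I''$ over $\mathcal{B}$ makes the two cross-ratios manifestly independent, which forces properness.

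Finally I would conclude by a dimension count. As $I''$ is irreducible of dimension $N$ and $E\subsetneq I''$ is closed, $\dm E<N$, and since $I''$ is generically finite over $\P^{N}$ the image $\varphi(E)\subsetneq\P^{N}$ is a proper closed subset. Intersecting over the finitely many multiplicity vectors, a general $X$ lies outside all these images and, by \autoref{remark:incidentlines}, has no two intersecting $k$-incident lines. For such $X$, any two distinct $(X,\ell),(X,\ell')\in I_{k}(m_{1},\dots,m_{k})$ are disjoint, hence underlie a point of $I''$ lying over $X\notin\varphi(E)$; there $\operatorname{cr}_{1}\neq\operatorname{cr}_{2}$, that is, $i_{(m_{1},\dots,m_{k})}(\ell)\neq i_{(m_{1},\dots,m_{k})}(\ell')$, as desired.
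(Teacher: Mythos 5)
Your proposal is correct and follows essentially the same route as the paper: an irreducible doubled incidence correspondence of dimension $N$ (the paper uses $I_{k}\times_{\P^{N}}I_{k}\setminus\Delta$ via \autoref{corollary:2transitive}, you use $I''_{k,k}$ directly, both resting on \autoref{lemma:2irreducible}), the observation that the cross-ratio coincidence locus is a proper closed subset, and a dimension count showing its image in $\P^{N}$ is proper. The only cosmetic difference is that the paper verifies properness by exhibiting a single pair of lines with distinct configurations, whereas you argue it by dominance of the configuration map onto $(\m_{0,k}/S_{k})^{2}$; these are interchangeable.
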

\begin{proof}
  Let $J \subset I_{k}(m_{1}, \dots ,m_{k}) \times_{\P^{N}}I_{k}(m_{1}, \dots ,m_{k}) $ denote the closure of the locus of pairs $\left( (X,\ell), (X,\ell') \right)$ having equal images under $i_{(m_{1},\dots,m_{k})}$, and such that $\ell \cap \ell'$ is empty. 

  By \autoref{corollary:2transitive}, The fiber product $I_{k}(m_{1}, \dots ,m_{k}) \times_{\P^{N}}I_{k}(m_{1}, \dots ,m_{k}) \setminus \Delta $ is irreducible, where $\Delta$ denotes the diagonal.

  It is easy to exhibit two lines $\ell, \ell' \subset \P^{r}$ and a degree $d$ hypersurface $X$ such that $\ell$ and $\ell'$ are $k$-incident with multiplicity $(m_{1}, \dots, m_{k})$ and such that $i_{(m_{1}, \dots, m_{k})}(\ell) \neq i_{(m_{1}, \dots, m_{k})}(\ell')$.   
  
  Therefore, $J$, being a closed set, must have dimension strictly smaller than this fiber product's dimension.  But the latter is the dimension of $\P^{N}$, i.e. the locus $J$ projects to a strict subset of $\P^{N}$. This proves the corollary.  
\end{proof}

\section{Extending $\mu_{1}$ across a $k$-incident line}
\begin{notation}\label{notation4}
Throughout this section: 
\begin{enumerate}
  \item $X \subset \P^{r}$ is a general hypersurface of degree $d$.
  \item We set $k := d - (2r-2)$, and assume $k \geq 1$.
  \item We let $\pi : \Sigma \to \G\left( 1,r \right)$ be the point-line correspondence for $X$.
  \item We continue to let 
    \begin{align*}
      \mu_{1}: \G\left( 1,r \right) \dashrightarrow \Hyp(d,1)
    \end{align*}
    denote the moduli map.
  \item Finally, we let $\ell$ be a $k$-incident line of $X$ with multiplicity $\left( m_{1}, \dots, m_{k} \right)$. 
  \end{enumerate}
\end{notation}

\begin{remark}
  \label{remark:assumefamiliarity}
  We will assume that the reader has some familiarity with the moduli space $\barm_{0,d}$. We will review some important background as appropriate. 
\end{remark}

Our goal is to understand the resolution of indeterminacy of $\mu_{1}$ near $\ell$.

The moduli space $\Hyp(d,1)$ is none other than the space $\m_{0,d}/S_{d}$. Unfortunately the latter moduli space is not fine, so we will first change settings to circumvent this technical inconvenience.

\begin{definition}
  \label{definition:orderedSigma}
  We define 
  \begin{align*}
    \Sigma^{\left[ d \right]} := \overline{\Sigma \times_{\pi} \dots \times_{\pi}\Sigma \setminus \text{Diagonals}}
  \end{align*}
  We let $s: \Sigma^{[d]} \to \G\left( 1,r \right)$ denote the structural degree $d!$  map. 
\end{definition}

\begin{proposition}
  \label{proposition:smoothSigmad}
  The scheme $\Sigma^{[d]}$ is smooth at any point $\hat{[\ell]} \in s^{-1}([\ell])$. 
\end{proposition}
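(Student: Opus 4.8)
The plan is to reduce the question to an explicit local model using the mini-versality proved earlier, and then to recognize the relevant "space of ordered roots" as a linear subspace, which is manifestly smooth.

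First I would upgrade versality to mini-versality. Since $\ell$ is $k$-incident with $k = d-(2r-2)$, its multiplicity vector $(m_1,\dots,m_k)$ satisfies $\sum_j(m_j-1) = d-k = 2r-2 = \dim\G(1,r)$. Hence \autoref{corollary:versalkincident}, together with this equality of dimensions, gives mini-versality at $[\ell]$: there is an \'etale neighborhood $U$ of $[\ell]$ and a local isomorphism $\nu\colon (U,[\ell]) \xrightarrow{\sim} (\prod_{j}\A^{m_j-1},0)$ under which $\Sigma|_U$ is the pullback of $\coprod_j W_{m_j}$. In particular $\Sigma|_U = \coprod_{j=1}^k \Sigma_j$ is a disjoint union, where $\Sigma_j\to U$ is the pullback of the mini-versal family $Y_{m_j}\to \A^{m_j-1}$ along the $j$-th component of $\nu$, a branched cover of degree $m_j$ whose fiber over $[\ell]$ is supported at the single point $p_j$.

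Next I would decompose the fiber product. Because $\Sigma|_U$ is a disjoint union, the $d$-fold fiber product over $U$ splits into pieces indexed by colorings $c\colon\{1,\dots,d\}\to\{1,\dots,k\}$, the $i$-th slot lying on branch $\Sigma_{c(i)}$. Points on distinct branches are automatically distinct (their fibers over any $u\in U$ lie in different components of $\Sigma_u$), so the ``distinct'' condition only constrains slots sharing a color; consequently only colorings with $|c^{-1}(j)|=m_j$ for all $j$ contribute over a generic nearby line, and each point of $s^{-1}([\ell])$ lies on exactly one such piece (the coloring is recovered from which $p_j$ each slot limits to, and the $p_j$ are distinct). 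For such a coloring the corresponding component of $\Sigma^{[d]}|_U$ is the fibered product over $U$
\begin{align*}
  \prod_{j=1}^k \overline{\bigl(\Sigma_j\times_U\cdots\times_U\Sigma_j\bigr)_{\mathrm{distinct}}},
\end{align*}
with $m_j$ factors in the $j$-th block, so it suffices to treat each block. The heart of the argument is then a single computation with the mini-versal family $Y_n\to\A^{n-1}$, $p_a(z)=z^n+a_{n-2}z^{n-2}+\dots+a_0$: I claim the closure of the distinct locus of the $n$-fold fiber product is the graph of the map sending $(z_1,\dots,z_n)$ to the coefficient vector of $\prod_i(z-z_i)$, restricted to the hyperplane $H_n=\{z_1+\dots+z_n=0\}\subset\A^n$. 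Indeed, over the distinct locus the $z_i$ are exactly the roots in some order, which forces the coefficient of $z^{n-1}$, namely $-\sum_i z_i$, to vanish and determines $a$ from the $z_i$; this identifies the distinct locus with a dense open subset of $H_n$, and since $H_n$ is a hyperplane its image under the closed graph embedding is smooth, irreducible, and closed, hence equal to the closure. Thus $\overline{(Y_n^{\times n})_{\mathrm{distinct}}}\cong H_n\cong\A^{n-1}$ is smooth everywhere, in particular at the origin $z_1=\dots=z_n=0$ lying over $0\in\A^{n-1}$.

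Finally I would reassemble the blocks. Each $\overline{(\Sigma_j^{\times_U m_j})_{\mathrm{distinct}}}$ is the base change of $H_{m_j}$ along the $j$-th component of $\nu$, and since $\nu$ is a local isomorphism the fibered product above is, \'etale-locally near $\hat{[\ell]}$, isomorphic to $\prod_j H_{m_j}$, a product of smooth varieties and hence smooth; as smoothness is \'etale-local this proves $\Sigma^{[d]}$ smooth at $\hat{[\ell]}$. I expect the main obstacle to be bookkeeping rather than depth: one must confirm that one genuinely has mini-versality and not merely versality (this is exactly the dimension count above), organize the combinatorics of colorings so that each point of $s^{-1}([\ell])$ sits on a single component, and verify that the closure of the distinct locus really is the linear ``sum-zero'' model, with no embedded or extraneous components sneaking in.
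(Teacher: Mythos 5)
Your proof is correct and follows essentially the same route as the paper's: both reduce via the versality statement (\autoref{corollary:versalkincident}, which in this dimension is mini-versality) to the local model $W_n$, and then identify the relevant component of the ordered fiber product with $\Spec k[r_1,\dots,r_n]/(r_1+\dots+r_n)\cong\A^{n-1}$, which is smooth. You simply make explicit two points the paper leaves implicit, namely the coloring bookkeeping for the components of $\Sigma^{[d]}$ over $[\ell]$ and the verification that the closure of the distinct locus is exactly the sum-zero hyperplane.
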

\begin{proof}
  This is a consequence of \autoref{corollary:versalkincident}. Indeed, \'etale locally around every $[\ell] \in \G\left( 1,r \right)$, we may assume $\pi : \Sigma \to \G\left( 1,r \right)$ is isomorphic to products of maps of the form 
  \begin{align*}
    \left\{ (z,a_{0}, \dots ,a_{n-2}) \mid z^{n} + a_{n-2}z^{n-2} + \dots a_{0}=0 \right\} \to (a_{0}, \dots ,a_{n-2})
  \end{align*}
  If $r_{1}, \dots r_{n}$ denote the roots of $z^{n} + a_{n-2}z^{n-2} + \dots  + a_{0}$, then the variety $$\Spec k\left[ r_{1}, \dots r_{n}\right]/(r_{1}+\dots+ r_{n}=0)$$ maps to $\Spec k\left[ a_{n-2}, \dots , a_{0}, \right]$ by the elementary symmetric expressions in the $r_{i}$. 

  This, in turn, means that the scheme $\Sigma^{\left[ d \right]}$, above the point $\left[ \ell \right] \in \G\left( 1,r \right)$, is \'etale locally isomorphic to a product of affine spaces $\Spec k[r_{1}, \dots, r_{n}]/(r_{1} + \dots + r_{n})$, and in particular smooth.
\end{proof}

\begin{lemma}
  \label{lemma:numberpreimages}
  The set $s^{-1}([\ell])$ has $\frac{d!}{m_{1}!m_{2}! \dots m_{k}!}$ elements.
\end{lemma}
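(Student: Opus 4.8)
The plan is to reduce the count to a purely local analysis in an \'etale neighborhood of $[\ell]$, exploiting the versal structure of $\pi : \Sigma \to \G(1,r)$ furnished by \autoref{corollary:versalkincident}. Since $\ell$ is $k$-incident with multiplicity $(m_1, \dots, m_k)$, the scheme-theoretic fiber is $\pi^{-1}([\ell]) = X \cap \ell = \sum_{i=1}^{k} m_i p_i$ for distinct $p_i \in \ell$, and $\Sigma$ is smooth (\autoref{lemma:sigmasmooth}), hence \'etale-locally irreducible at each of the $k$ points $\hat p_i := (p_i,[\ell])$. By versality combined with \autoref{proposition:versalitymultiple}, near $[\ell]$ the cover $\pi$ splits as a disjoint union of $k$ branches, the $i$-th being the pullback of $W_{m_i} \to \A^{m_i-1}$ along a submersion $U \to \A^{m_i-1}$; in particular the $i$-th branch carries exactly $m_i$ sheets, all of which come together at $\hat p_i$.

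First I would describe the points of $s^{-1}([\ell])$ as limits of orderings. Over a line $\ell'$ near $\ell$ meeting $X$ transversally the fiber consists of $d$ distinct points, organized into $k$ clusters of sizes $m_1, \dots, m_k$ according to the branches, and a point of $\Sigma^{[d]}$ away from the diagonals is a bijection between the $d$ slots and these $d$ points. Letting $\ell' \to \ell$ sorts these orderings according to the assignment $f : \{1, \dots, d\} \to \{1, \dots, k\}$ recording which branch each slot lands on. The next step is to show that an assignment $f$ contributes a limit point if and only if $|f^{-1}(i)| = m_i$ for all $i$: if some branch $i$ received more than $m_i$ slots, those slots could not be filled with distinct points arbitrarily close to the limit, since that branch has only $m_i$ sheets, so no nearby configuration and hence no limit exists; as $\sum_i |f^{-1}(i)| = d = \sum_i m_i$, the inequalities $|f^{-1}(i)| \le m_i$ force equalities.

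The crux is then to check that, for each admissible $f$, all $\prod_i m_i!$ orderings inducing $f$ converge to a single point of $\Sigma^{[d]}$. Within the $i$-th branch, the closure of the locus of distinct ordered roots is precisely the smooth, irreducible ordered-root space $\Spec k[r_1, \dots, r_{m_i}]/(r_1 + \dots + r_{m_i})$ appearing in the proof of \autoref{proposition:smoothSigmad}, whose fiber over the central point (all roots equal to $0$) is a single reduced point; so the $m_i!$ internal orderings collapse to one limit. Distinct admissible assignments $f$ map to distinct underlying tuples $(\hat p_{f(1)}, \dots, \hat p_{f(d)}) \in \Sigma^{\times_\G d}$, so they yield genuinely distinct points of $s^{-1}([\ell])$. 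Consequently $s^{-1}([\ell])$ is in bijection with the functions $f$ having fiber sizes $m_1, \dots, m_k$, of which there are $\frac{d!}{m_1! m_2! \cdots m_k!}$.

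As a consistency check, each such point carries local degree $\prod_i m_i!$ over $\G(1,r)$, so the total local degree is $\frac{d!}{\prod_i m_i!}\cdot \prod_i m_i! = d!$, matching the degree of $s$. I expect the main obstacle to be the collapsing step in the previous paragraph: one must verify that the closure operation built into the definition of $\Sigma^{[d]}$ does not separate the internal orderings into several limit points. This is exactly where the unibranchness of the ordered-root space established in \autoref{proposition:smoothSigmad} is indispensable, and the only genuinely geometric input beyond the combinatorial bookkeeping.
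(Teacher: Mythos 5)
Your argument is correct and rests on the same key input as the paper's own proof, namely the local description of $s$ over $[\ell]$ as a product of ordered-root spaces $\Spec k[r_{1},\dots,r_{m_{i}}]/(r_{1}+\dots+r_{m_{i}})$ coming from \autoref{proposition:smoothSigmad} and \autoref{corollary:versalkincident}. The paper merely packages the count as an orbit--stabilizer computation (the $S_{d}$-action on $s^{-1}([\ell])$ is transitive with stabilizer $S_{m_{1}}\times\dots\times S_{m_{k}}$), which is the same multinomial count you obtain by enumerating the branch assignments $f$ directly; your collapsing step is exactly the stabilizer computation in disguise.
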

\begin{proof}
  The group $S_{d}$ acts transitively on the set $s^{-1}([\ell])$.  From the local description of $s$ over $[\ell]$ explained in the proof of the previous proposition, we see that the stabilizer of a point in $s^{-1}([\ell])$ is a copy of $S_{m_{1}} \times \dots \times S_{m_{k}} \subset S_{d}$.  
\end{proof}

\begin{definition}
  \label{definition:Sigmasmoothd}
  We let $\pi^{\left[ d \right]} : \P \to \Sigma^{\left[ d \right]}$ denote the tautological $\P^{1}$-bundle, pulled back from $\G\left( 1,r \right)$.  $\pi^{\left[ d \right]}$ has $d$ tautological sections $\sigma_{i}$, $i = 1, \dots d$. 

  We let $U \subset \Sigma^{\left[ d \right]}$ denote the open set over which none of the sections $\sigma_{i}$ intersect one another.
  We obtain an induced moduli map which we write as: 
  \begin{align*}
    \mu_{1}^{\left[ d \right]} : U \to \barm_{0,d}.
  \end{align*}

  Finally, we let $\hat{[\ell]}$ be any point in $\Sigma^{[d]}$ lying over $[\ell]$. 
\end{definition}

\subsection{Resolution near $\hat{[\ell]}$}
Now that we have altered our setting, we begin the study of the resolution of indeterminacy of $\mu_{1}^{\left[ d \right]}$, viewed as a rational map 
\begin{align*}
  \mu_{1}^{\left[ d \right]}: \Sigma^{\left[ d \right]} \dashrightarrow \barm_{0,d}
\end{align*}
near the point $\hat{[\ell]}$.

\begin{definition}
  \label{definition:resolutionZ}
  We define $$Z \subset \Sigma^{\left[ d \right]} \times \barm_{0,d}$$ to be the closure of the graph of $m_{1}^{\left[ d \right]}$. Furthermore, we let $\alpha : Z \to \barm_{0,d}$ denote the projection to the second factor and we let $\beta: Z \to \Sigma^{\left[ d \right]}$ be the natural (birational) projection. 
\end{definition}

\subsection{Blowup description of $\barm_{0,d}$} Recall that the moduli space $\barm_{0,d}$ can be obtained as a blow up
$$\kappa: \barm_{0,d} \to (\P^{1})^{d-3}$$ which, to a general point $(\P^{1}, s_{1}, \dots, s_{d}) \in \barm_{0,d}$ assigns the coordinates of $(s_{4}, \dots, s_{d})$ once we use the $\PGL_{2}$-action to send $s_{1}$ to $0$, $s_{2}$ to $1$ and $s_{3}$ to $\infty$. 

\begin{notation}
  \label{notation:simplex} Moving forward, we will use the following objects and notation:
  \begin{enumerate}
    \item  Let $n \geq 1$ be any integer. We let $\Delta_{n}$ denote the scheme $$\Spec k\left[ r_{1}, \dots, r_{n} \right] / (r_{1}+ \dots +r_{n} = 0).$$

    \item We fix $\alpha_{1}, \dots, \alpha_{k} \in \A^{1}$, and corresponding multiplicities $m_{1}, \dots, m_{k}$ such that $\sum_{i=1}^{k}m_{i} = d$.
    \item We assume $k \geq 3$.

    \item  We define the {\sl tautological family} $F_{n}$  over $\Delta_{n}$ anchored at the point $\alpha_{i}$ to be the scheme
      \begin{align*}
	\prod_{j=1}^{n}\left( z - \alpha_{i} - r_{j} \right) = 0.
      \end{align*}
      Note: $F_{n}$ is a closed subscheme of $\Delta_{n} \times \A^{1}$.
    \item Let $\varphi_{n}(\alpha_{i}): \Delta_{n} \to (\A^{1})^{n}$ be defined by the formula: 
      \begin{align*}
	(r_{1}, \dots, r_{n}) \mapsto (\alpha_{i}+r_{1}, \dots, \alpha_{i}+r_{n}).
      \end{align*}
    \item We let 
      \begin{align*}
	\varphi_{m_{1}, \dots, m_{k}}\left( \alpha_{1}, \dots, \alpha_{k} \right): \Delta_{m_{1}} \times \dots \times \Delta_{m_{k}} \to (\A^{1})^{d} \subset (\P^{1})^{d}
      \end{align*}
      denote the map $\varphi_{m_{1}}(\alpha_{1}) \times \dots \times \varphi_{m_{k}}(\alpha_{k})$. 
    \item Let $x \in (\P^{1})^{m}$ be any point. We let $D_{x}$ denote the vector space of first order deformations of $x$ induced by the diagonal $\PGL_{2}$-action on $(\P^{1})^{m}$.

\end{enumerate}
\end{notation}

\begin{remark}
  \label{remark:ofcourse}
  Of course, $\Delta_{n} \simeq \A^{n-1}$, but we want to emphasize the particular presentation of the coordinate ring.
\end{remark}

\begin{proposition}
  \label{proposition:localisomorphism}
 Under the assumptions above,  the map 
  \begin{align*}
    \varphi_{m_{1}, \dots, m_{k}}\left( \alpha_{1}, \dots, \alpha_{k} \right): \Delta_{m_{1}} \times \dots \Delta_{m_{k}} \to (\P^{1})^{d}
  \end{align*}
 has the following properties 
 \begin{enumerate}
   \item The induced map on tangent spaces is injective at $0 \in \Delta_{m_{1}} \times \dots \times \Delta_{m_{k}}$.
   \item The tangent space $T_{0}\Delta_{m_{1}} \times \dots \Delta_{m_{k}}$ and the space $D_{\varphi_{m_{1}, \dots, m_{k}}(\alpha_{1}, \dots, \alpha_{k})(0)}$ are linearly independent vector subspaces of the tangent space of $(\P^{1})^{d}$ at  $\varphi_{m_{1}, \dots, m_{k}}(\alpha_{1}, \dots, \alpha_{k})(0)$.
   \end{enumerate}
\end{proposition}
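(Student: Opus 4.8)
The plan is to choose affine coordinates on $(\P^1)^d$ containing the image point, write the image point and the two tangent subspaces explicitly as subspaces of $k^d$, and then verify both assertions by elementary linear algebra. Concretely, I would work in the chart $(\A^1)^d \subset (\P^1)^d$ with coordinates $z_1, \dots, z_d$, partitioned into $k$ \emph{blocks}, the $j$-th block consisting of the $m_j$ indices $m_1 + \dots + m_{j-1}+1, \dots, m_1 + \dots + m_j$. Since at $0$ every $r_i$ vanishes, the image point $x_0 := \varphi_{m_1, \dots, m_k}(\alpha_1, \dots, \alpha_k)(0)$ has $z_i = \alpha_j$ for every index $i$ in the $j$-th block, and I identify $T_{x_0}(\P^1)^d$ with $k^d$ via the basis $\partial_{z_1}, \dots, \partial_{z_d}$.

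For part (1), the map $r_i \mapsto \alpha_j + r_i$ is a translation, so the differential of $\varphi_{m_j}(\alpha_j)$ at $0$ is just the inclusion of $T_0 \Delta_{m_j} = \{(t_i) : \sum_i t_i = 0\}$ into the $j$-th coordinate block $k^{m_j}$, sending $(t_i)$ to itself. Because the $k$ blocks occupy disjoint coordinates, the total differential has image $W = W_1 \oplus \dots \oplus W_k$, where $W_j \subset k^{m_j}$ is the subspace cut out by $\sum_{i \in \text{block } j} t_i = 0$. This has dimension $\sum_j (m_j - 1) = d-k = \dm(\Delta_{m_1} \times \dots \times \Delta_{m_k})$, so the differential is injective.

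For part (2), the key step is to identify $D_{x_0}$. The Lie algebra of $\PGL_2$ acts on $\P^1$ through the vector fields $\Span\{\partial_z, z\partial_z, z^2\partial_z\}$, so the diagonal action on $(\P^1)^d$ produces, from a triple $(a,b,c)$, the tangent vector $\sum_{i=1}^{d}(a + bz_i + cz_i^2)\partial_{z_i}$. Evaluating at $x_0$, where $z_i = \alpha_j$ throughout the $j$-th block, shows that $D_{x_0}$ consists exactly of the vectors that are \emph{constant on each block}. Now take $v \in W \cap D_{x_0}$: lying in $D_{x_0}$, the vector $v$ has a single value $w_j$ on the $j$-th block, while lying in $W$ forces its block sums to vanish, i.e. $m_j w_j = 0$. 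Since each $m_j \geq 1$ and the ground field has characteristic zero, $w_j = 0$ for all $j$, so $v = 0$. Thus $W \cap D_{x_0} = 0$, which, after using part (1) to identify $T_0$ with $W$, is precisely the claimed linear independence.

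The argument is elementary throughout, so there is no real obstacle; the only points deserving attention are the clean identification of $D_{x_0}$ with the block-constant vectors and the final implication $m_j w_j = 0 \Rightarrow w_j = 0$, which genuinely uses $m_j > 0$ in characteristic zero. It is worth noting that $D_{x_0}$ is $3$-dimensional exactly because the hypothesis $k \geq 3$ makes the three functionals $(a,b,c)\mapsto a + b\alpha_j + c\alpha_j^2$ (for distinct $\alpha_j$) linearly independent; this is also what makes room for the direct sum $W \oplus D_{x_0}$ of dimension $d-k+3$ inside $k^d$, though only the trivial intersection is needed for the statement.
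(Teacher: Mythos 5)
Your proposal is correct and follows essentially the same route as the paper: both identify the $\PGL_{2}$-induced deformations as block-constant tangent vectors, identify the image of $T_{0}\bigl(\Delta_{m_{1}}\times\dots\times\Delta_{m_{k}}\bigr)$ with the block-sum-zero vectors, and conclude from $m_{j}w_{j}=0$ (in characteristic zero) that the intersection is trivial. Your version is only slightly more explicit, deducing $w_{j}=0$ for every block rather than invoking, as the paper does, that an infinitesimal automorphism of $\P^{1}$ is determined by its effect at the three points $\alpha_{1},\alpha_{2},\alpha_{3}$.
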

\begin{proof}
  The first point is basically by definitions. 

  An element $v \in D_{\varphi_{m_{1}, \dots, m_{k}}(\alpha_{1}, \dots, \alpha_{k})(0)} $ is determined by its effect at the three distinct points $\alpha_{1}, \alpha_{2}$ and $\alpha_{3}$ in $\P^{1}$.  Assume the first order deformation $v$ has the following effect: 
  \begin{align*}
    \alpha_{1} \mapsto \alpha_{1} + \varepsilon v_{1}\\
    \alpha_{2} \mapsto \alpha_{2} + \varepsilon v_{2}\\
    \alpha_{3} \mapsto \alpha_{3} + \varepsilon v_{3}.
  \end{align*}

  Due to the defining equation $(r_{1}+ \dots + r_{n} = 0)$ for $\Delta_{n}$, In order for $v$ to be in $T_{0}\Delta_{m_{1}} \times \dots \times \Delta_{m_{k}}$, we must have: 
  \begin{align*}
    m_{1}v_{1} = 0\\
    m_{2}v_{2} = 0\\
    m_{3}v_{3} = 0.
  \end{align*}
  But then all $v_{i}$ are zero, and hence $v$ is.  This is what we wanted to show. 
\end{proof}

\begin{corollary}
  \label{corollary:localisomorphismontoimage}
  The induced map 
  \begin{align*}
    \bar{\varphi}: \Delta_{m_{1}} \times \dots \times \Delta_{m_{k}} \to (\P^{1})^{d} / \PGL_{2} \simeq (\P^{1})^{d-3}
  \end{align*}
  is a local immersion near the point $0 \in \Delta_{m_{1}} \times \dots \times \Delta_{m_{k}}$.
\end{corollary}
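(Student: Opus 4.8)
The plan is to reduce the statement to the linear-algebra facts already packaged in \autoref{proposition:localisomorphism}. Since a morphism of smooth varieties is a local immersion at a point exactly when its differential there is injective, I only need to analyze the differential of $\bar\varphi$ at $0$, and show it has trivial kernel.

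First I would record the key fact about the quotient map $q: (\P^1)^d \to (\P^1)^d/\PGL_2$: at any point $p$ corresponding to a configuration with at least three distinct coordinates, the diagonal $\PGL_2$-action has finite stabilizer, $p$ is a stable point, and $q$ is a submersion near $p$. In our situation $p = \varphi_{m_1,\dots,m_k}(\alpha_1,\dots,\alpha_k)(0)$ has exactly $k \geq 3$ distinct coordinate values $\alpha_1,\dots,\alpha_k$, so this applies. The crucial consequence is that the kernel of the differential $dq_p$ is precisely the tangent space to the $\PGL_2$-orbit through $p$, which is by definition the space $D_p$.

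Next, writing $\bar\varphi = q \circ \varphi_{m_1,\dots,m_k}(\alpha_1,\dots,\alpha_k)$, the differential $d\bar\varphi_0$ factors as $dq_p \circ d\varphi_0$. A vector $w \in T_0(\Delta_{m_1}\times\cdots\times\Delta_{m_k})$ lies in $\ker d\bar\varphi_0$ if and only if $d\varphi_0(w) \in \ker dq_p = D_p$. Point (1) of \autoref{proposition:localisomorphism} gives that $d\varphi_0$ is injective, realizing $T_0(\Delta_{m_1}\times\cdots\times\Delta_{m_k})$ as a subspace of $T_p(\P^1)^d$; point (2) gives that this subspace is linearly independent from $D_p$, so it meets $D_p$ only in $0$. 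Hence $d\varphi_0(w) \in D_p$ forces $d\varphi_0(w) = 0$, and injectivity of $d\varphi_0$ then forces $w = 0$. Therefore $\ker d\bar\varphi_0 = 0$ and $\bar\varphi$ is a local immersion at $0$.

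The main (indeed the only) obstacle is the identification $\ker dq_p = D_p$, that is, matching the kernel of the differential of the geometric quotient with the orbit tangent space. This rests on knowing that $p$ is a stable configuration and that $q$ is a submersion there, which is exactly what the hypothesis $k \geq 3$ together with the distinctness of the $\alpha_i$ supplies. Granting this, the conclusion is a purely formal consequence of the injectivity and transversality established in \autoref{proposition:localisomorphism}.
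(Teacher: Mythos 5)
Your argument is correct and is exactly the deduction the paper intends: the paper's proof of this corollary is simply a citation of \autoref{proposition:localisomorphism}, and your write-up supplies the standard formal bridge (the differential of the quotient map at a configuration with at least three distinct coordinates has kernel equal to the orbit tangent space $D_{p}$, so injectivity plus transversality from the proposition yields injectivity of $d\bar{\varphi}_{0}$). No discrepancy with the paper's approach.
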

\begin{proof}
  This follows from \autoref{proposition:localisomorphism}.
\end{proof}

\begin{corollary}
  \label{corollary:mainresolutioncorollary}
  There exists an \'etale neighborhood $V \subset \Sigma^{[d]}$ containing $\hat{[\ell]}$ such that 
  \begin{align*}
    \alpha: \beta^{-1}(V) \to \barm_{0,d}
  \end{align*}
  is an isomorphism onto its image. In particular, $\alpha$ is unramified at all points of $\beta^{-1}(\hat{[\ell]})$.

\end{corollary}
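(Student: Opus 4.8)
The plan is to reduce the entire statement to the explicit mini-versal model of $\Sigma^{[d]}$ near $\hat{[\ell]}$, and then to recognize $\beta^{-1}(V)$ as (the main component of) a fiber product over the blow-down $(\P^1)^{d-3}$, where the immersivity of $\bar\varphi$ from \autoref{corollary:localisomorphismontoimage} forces the projection $\alpha$ to be a locally closed embedding.

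First I would set up the local picture. Since $\sum_j(m_j-1) = d-k = 2r-2 = \dm \G(1,r)$, the versality of \autoref{corollary:versalkincident} is in fact mini-versality at $[\ell]$ (a tangent-level submersion between smooth varieties of equal dimension is a local isomorphism), exactly as exploited in the proof of \autoref{proposition:smoothSigmad}. Hence an \'etale neighborhood of $\hat{[\ell]}$ in $\Sigma^{[d]}$ is identified with a neighborhood of $0$ in $\Delta := \Delta_{m_1} \times \dots \times \Delta_{m_k}$, the $d$ tautological sections meeting $\ell$ at the points $\alpha_j + r_{j,i}$. Under this identification, and using the blow-down $\kappa : \barm_{0,d} \to (\P^1)^{d-3}$ recalled above, the moduli map $\mu_1^{[d]}$ is the composite $\kappa^{-1}\circ \bar\varphi$ on the locus of distinct points, where $\bar\varphi = \bar\varphi_{m_1,\dots,m_k}(\alpha_1,\dots,\alpha_k)$; equivalently, $\kappa \circ \mu_1^{[d]} = \bar\varphi$ wherever $\mu_1^{[d]}$ is defined.

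Next I would run the fiber-product argument. By \autoref{corollary:localisomorphismontoimage} the map $\bar\varphi$ is a local immersion at $0$, so after shrinking $V$ I may assume $\bar\varphi$ is a closed embedding of $V \cong \Delta$ into an open subset of $(\P^1)^{d-3}$. The relation $\kappa \circ \alpha = \bar\varphi \circ \beta$ on $Z$ shows that $(\beta,\alpha)$ maps $\beta^{-1}(V)$ into the fiber product $V \times_{(\P^1)^{d-3}} \barm_{0,d}$, where $V \to (\P^1)^{d-3}$ is $\bar\varphi$. Because $\bar\varphi$ is a closed embedding, the projection of this fiber product to $\barm_{0,d}$ is the base change of a closed embedding along $\kappa$, hence itself a locally closed embedding. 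Now $\beta^{-1}(V)$, being the closure of the graph of $\mu_1^{[d]}$ over the irreducible $V$, is the unique component of this fiber product dominating $V$, and so is a closed subscheme of it. Therefore $\alpha : \beta^{-1}(V) \to \barm_{0,d}$ is a composite of two immersions, i.e.\ an isomorphism onto its image; in particular it is unramified along $\beta^{-1}(\hat{[\ell]})$, as claimed.

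The hard part is the second paragraph: pinning down that, in the mini-versal coordinates, the moduli map really is modeled by $\bar\varphi$ up to $\kappa$. Concretely this means verifying that the tautological intersection points, read in the versal $z$-coordinate, are exactly the $\alpha_j + r_{j,i}$ with the cluster centroids held fixed. That normalization is precisely what is built into $W_{m_j}$ through the vanishing of the $z^{m_j-1}$ coefficient (the sum-of-roots condition cutting out $\Delta_{m_j}$), combined with the $\PGL_2$-quotient defining $(\P^1)^{d-3}$ and the independence recorded in \autoref{proposition:localisomorphism}. Once this dictionary is in place, the fiber-product formalism together with the immersivity of \autoref{corollary:localisomorphismontoimage} closes the argument with no further computation.
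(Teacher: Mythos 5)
Your argument is correct and follows the same route as the paper: the paper's proof of \autoref{corollary:mainresolutioncorollary} is a one-line citation of exactly the three ingredients you use (the \'etale-local identification of $\Sigma^{[d]}$ with $\Delta_{m_{1}} \times \dots \times \Delta_{m_{k}}$ near $\hat{[\ell]}$, the immersivity of $\bar{\varphi}$ from \autoref{corollary:localisomorphismontoimage}, and the blow-up description $\kappa: \barm_{0,d} \to (\P^{1})^{d-3}$). Your fiber-product formalization of the lifting step, and your verification that $\kappa \circ \mu_{1}^{[d]}$ agrees with $\bar{\varphi}$ in the mini-versal coordinates, are precisely the details the paper leaves implicit.
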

\begin{proof}
  This follows from \autoref{corollary:localisomorphismontoimage}, from the fact that $\Sigma^{[d]}$ is \'etale locally isomorphic to $\Delta_{m_{1}} \times \dots \times \Delta_{m_{k}}$ near $\hat{[\ell]}$, and the description of $\barm_{0,d}$ as a blowup $\kappa: \barm_{0,d} \to (\P^{1})^{d-3}$. 
\end{proof}

\subsection{Stable reduction} For the reader's convenience, we briefly review the procedure of stable reduction for families of $d$-pointed rational curves. 

\begin{notation} \label{notation5} The following notation will appear frequently in this section:

  \begin{enumerate}
    \item Let $B$ be a smooth curve, and $b \in B$ a specified point. 
      
    \item Let $\sigma_{1}, \dots, \sigma_{d}$ be distinct sections of the projection $\P^{1} \times B \to B$.  We assume that away from $b$, the sections $\sigma_{i}$ are mutually disjoint. We let $\P^{1}_{b}$ denote $\P^{1} \times \left\{ b \right\}$. 

    \item Let 
\begin{align*}
  D := \left( \bigcup_{i}\sigma_{i} \right) \cap \P^{1}_{b}.
\end{align*}
 and write $D = p_{1} + \dots p_{n} + a_{1}q_{1} + \dots a_{k-n}q_{k-n}$, with $a_{i}>1$ and all points $p_{j}, q_{j}$ distinct.

 \item We denote stable genus $0$ curves in the usual way as $(P,s_{1}, \dots, s_{d})$. The nodes and marked points of $(P,s_{1}, \dots, s_{d})$ are called {\sl special} points. 
\end{enumerate}
\end{notation}

\begin{proposition}
  \label{proposition:stablereduction}
  Assume $k \geq 3$. Then the stable replacement $\left( P, s_{1}, \dots , s_{d} \right) \in \barm_{0,d}$ of the pair $D \subset \P^{1}_{b}$ is a union of
  \begin{enumerate}
    \item $\P^{1}_{b}$ and
    \item $T_{i}, i = 1, \dots k-n$, possibly-nodal rational tails with $T_{i} \cap \P^{1}_{b} = q_{i}.$ 
  \end{enumerate}
  The marked points $\left\{ s_{1}, \dots, s_{d} \right\}$ are distributed as follows: on $\P^{1}_{b}$ the points $p_{1}, \dots, p_{n}$ are marked, while each $T_{i}$ contains $a_{i}$ marked points. 
\end{proposition}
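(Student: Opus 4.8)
The plan is to realize the stable replacement as the value at $b$ of the morphism $B \to \barm_{0,d}$ extending the classifying map of the family $(\P^1 \times B \to B; \sigma_1, \dots, \sigma_d)$, which is already defined on $B \setminus \{b\}$ because there the sections are mutually disjoint and so define a smooth $d$-pointed genus $0$ curve. Since $B$ is a smooth curve and $\barm_{0,d}$ is a proper variety, this extension exists and is unique, so there is a well-defined limiting stable curve $(P,s_1,\dots,s_d)$; the content of the proposition is to identify it. I would do so by explicit stable reduction, carried out locally and independently at each collision point $q_i$, since away from the finitely many points $q_i \in \P^1_b$ the sections already meet $\P^1_b$ transversally in the distinct simple points $p_1,\dots,p_n$ and there is nothing to resolve.

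Fix one $q_i$ and choose local coordinates $(z,t)$ on $\P^1 \times B$ with $q_i = \{z = 0\}$ and $b = \{t = 0\}$, so that the $a_i$ sections colliding at $q_i$ are the graphs $z = f_j(t)$ with $f_j(0) = 0$. I would resolve the collision by repeatedly blowing up the total surface: blowing up the point $(0,0)$ produces an exceptional $\P^1$ whose points record the leading tangent directions of the branches, and the proper transforms of those branches sharing a common direction continue to collide and are separated by further blow-ups. Because $\P^1 \times B$ is a smooth surface and we only ever blow up a smooth point of the reduced central fiber (never a node), the total space stays smooth and the central fiber stays reduced and nodal throughout, so no base change is forced. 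After finitely many blow-ups the $a_i$ sections become mutually disjoint, and the chain of exceptional curves forms a tree of $\P^1$'s meeting the proper transform of $\P^1_b$ in the single point lying over $q_i$.

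I would then pass from this semistable model to the stable model by contracting the unstable components, the crucial input being the count of special points. On the proper transform of $\P^1_b$ sit the $n$ simple marked points $p_1, \dots, p_n$ together with the $k-n$ nodes joining it to the trees over $q_1, \dots, q_{k-n}$, for a total of $n + (k-n) = k \geq 3$ special points; hence $\P^1_b$ is stable and survives as the main component. Contracting the unstable components inside each tree yields a possibly reducible (i.e. possibly-nodal) rational tail $T_i$ attached to $\P^1_b$ at exactly the node over $q_i$ and carrying precisely the $a_i$ marked points coming from the sections through $q_i$; each $T_i$ is stable because $a_i + 1 \geq 3$. The stabilization of the blown-up family is then a family of stable $d$-pointed genus $0$ curves agreeing with the original family away from $b$, so by uniqueness of stable limits its central fiber is the sought-for $(P,s_1,\dots,s_d)$, which has exactly the claimed shape.

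The step requiring the most care is the local analysis at each $q_i$: one must check that the iterated blow-ups genuinely separate all $a_i$ branches into a connected tree meeting $\P^1_b$ only at $q_i$ and distribute the $a_i$ markings correctly, and that at each stage one blows up a smooth point of a reduced fiber so that no non-reduced components are introduced. The hypothesis $k \geq 3$ enters precisely at the stabilization step, guaranteeing that $\P^1_b$ is not itself contracted, while the assumption $a_i > 1$ is exactly what makes each tail $T_i$ stable.
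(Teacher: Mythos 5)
Your proposal is correct and follows essentially the same route as the paper: both realize the stable limit by blowing up the surface $\P^{1}\times B$ at the collision points $q_{i}$ until the branches of $\bigcup\sigma_{i}$ separate, observe that $\P^{1}_{b}$ persists as a component with $k\geq 3$ special points, and then contract the unstable components of the exceptional trees. Your write-up simply supplies more detail (local coordinates at each $q_{i}$, smoothness and reducedness of the central fiber, and uniqueness of stable limits) than the paper's four-step sketch.
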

\begin{proof}
  We recall the stable reduction procedure, from which the proposition is immediately verified. 

  \begin{enumerate}
    \item We view $C := \bigcup \sigma_{i} \subset \P^{1} \times B$ as a curve on a surface, with singularities $q_{i} \in \P^{1}_{b}$.
    \item We resolve the singularities $q_{i} \in C$ by blowing up repeatedly until all branches of $C$ through $q_{i}$ are separated.
    \item Since each blow up in the resolution process happens at smooth centers, the new fiber $P$  of the blown up surface over the point $b \in B$ is the union of the original $\P^{1}_{b}$ and several rational tails attached to $\P^{1}_{b}$ at the points $q_{i}$. 
    \item We contract any components of $P$ which have $\leq 2$ special points.

\end{enumerate}

  The proposition follows from this description of the stable reduction process.
\end{proof}

\begin{remark}
  \label{remark:stablereductionprocess}
  The process of stable reduction described in the proof of \autoref{proposition:stablereduction} will be referred to frequently in the remainder of this section.  We emphasize that when $k \geq 3$, the original fiber $\P^{1}_{b}$ {\sl naturally} persists as a component of the stable reduction $P$.
\end{remark}

\begin{definition}
  \label{definition:chain}
  A {\sl chain} is a nodal curve $T = P_{1} \cup \dots \cup P_{j}$ where each $P_{i} \equiv \P^{1}$, and $P_{i}$ intersects $P_{i+1}$ at one node, for all $i = 1, \dots j-1$, with no further intersections between components. 
\end{definition}

\begin{proposition}
  \label{proposition:limitedorigin}
  Assume $(P,s_{1}, \dots , s_{d}) \in \barm_{0,d}$ is a stable curve which is the nodal union of: 
  \begin{enumerate}
    \item a $\P^{1}$ with $n$ marked points $p_{1}, \dots, p_{n}$, and
    \item $k-n$ chains $T_{i}$, $i = 1, \dots k-n$, with each $T_{i}$ intersecting $\P^{1}$ once at a point $q_{i}$, every component of $T_{i}$ having exactly three special points, and each $T_{i}$ containing $a_{i}>1$ marked points.  
  \end{enumerate}
  Suppose $(P, s_{1}, \dots, s_{d})$ is the stable replacement for $D \subset \P^{1}_{b}$ in \autoref{notation5}.  

  Then 
  \begin{enumerate}
    \item $D \subset \P^{1}_{b}$ is supported on $\leq k$ points. 
    \item If $D$ is supported on $k$ points, and if we assume  $k \geq 4$,  then $D$ must equal $p_{1}+ \dots + p_{n} + a_{1}q_{1} + \dots a_{k-n}q_{k-n}$.
    \end{enumerate}
\end{proposition}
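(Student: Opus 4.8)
The plan is to reconstruct $D$ purely from the combinatorics of the stable curve $P$, exploiting the one invariant that the stable reduction process of \autoref{proposition:stablereduction} makes transparent: the distinguished component $\P^{1}_{b}$ carrying the limit of $D$. First I would record the dictionary supplied by that procedure (together with \autoref{remark:stablereductionprocess}). Running the resolution on the family of \autoref{notation5}, the original fiber $\P^{1}_{b}$ is left untouched away from the multiple points $q_{i}$ of $D$: a simple point of $D$ stays a point through which a single section passes, hence a marked point of $P$, while each multiple point $q_{i}$ is where a tail sprouts, hence a node of $P$. Consequently, as long as $\P^{1}_{b}$ is not contracted --- which by \autoref{remark:stablereductionprocess} happens exactly when $D$ is supported on at least three points --- the number of points in the support of $D$ equals the number of special points of $\P^{1}_{b}$ \emph{as a component of $P$}, and the multiplicity of $q_{i}$ in $D$ equals the number of marked points carried by the tail attached at $q_{i}$.

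For part (1) I would then simply observe that $\P^{1}_{b}$ is one of the components of $P$, and that every component of $P$ has at most $k$ special points: the central $\P^{1}$ has $n + (k-n) = k$ of them, while every component of every chain $T_{i}$ has exactly three by hypothesis (and $k \geq 3$). If $D$ is supported on at most two points the bound is immediate since $k \geq 3$; otherwise $\P^{1}_{b}$ persists, and its special-point count, which equals the support size of $D$, is at most $k$.

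For part (2) the hypothesis $k \geq 4$ is exactly what pins down which component is $\P^{1}_{b}$. If $D$ is supported on $k$ points then $\P^{1}_{b}$ persists with $k \geq 4$ special points; since every chain component has only $3 < k$ special points, $\P^{1}_{b}$ cannot lie on any chain and must be the central $\P^{1}$. Reading off the dictionary, the $n$ marked points of the central component are the simple points of $D$, while the $k-n$ nodes are the multiple points $q_{i}$, each appearing in $D$ with multiplicity equal to the number $a_{i}$ of marked points on $T_{i}$. Tallying that all $d$ sections are accounted for (the $n$ simple ones plus $\sum_{i} a_{i}$ on the tails) yields $D = p_{1} + \dots + p_{n} + a_{1}q_{1} + \dots + a_{k-n}q_{k-n}$.

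The main obstacle is conceptual rather than computational: one must resist identifying $\P^{1}_{b}$ with the central component a priori. The whole point is that an abstract stable curve does not remember its \emph{root}, so for $k = 3$ a chain component --- also having exactly three special points --- could equally well serve as $\P^{1}_{b}$ and produce a genuinely different $D$ of support size three. This is why the inequality in part (1) is a true inequality, and why the role of $k \geq 4$ in part (2) is precisely to remove this ambiguity by a special-point count.
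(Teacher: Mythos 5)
Your proof is correct and takes essentially the same approach as the paper: the paper's own proof is a one-line appeal to the stable reduction procedure of \autoref{proposition:stablereduction} together with the persistence of $\P^{1}_{b}$ noted in \autoref{remark:stablereductionprocess}, and your argument is exactly that reasoning with the details written out (the dictionary between the support of $D$ and the special points of $\P^{1}_{b}$, the special-point count showing every component of $P$ has at most $k$ special points, and the observation that $k \geq 4$ is what forces $\P^{1}_{b}$ to be the central component). Your closing remark about the $k=3$ ambiguity correctly identifies why part (2) needs $k \geq 4$.
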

\begin{proof}
  These two points follow from the stable reduction procedure described in the proof of \autoref{proposition:stablereduction}, and from the observations in \autoref{remark:stablereductionprocess}.
\end{proof}

\subsection{Dandelions and the proof of \autoref{theorem:main}} 
\begin{definition}
  \label{definition:k-dandelion}
  Any point $[(P,s_{1},\dots,s_{d})] \in \barm_{0,d}$ having dual graph  
  \tikzset{main node/.style={thick,circle,draw},}
  
  \begin{tikzpicture}
    \node[main node, label={[xshift=-.1cm]\small $k-1$}] (1) {f};
    \node[main node, label={\small $1$}] (2) [right = .7cm of 1] {s};
    \node[main node, label={\small $1$}] (3) [right = .7cm of 2] {s};
    \node[main node, label={\small $2$}] (4) [right = .7cm of 3] {s};

    \path (1) edge (2);
    \path (2) -- node[auto=false]{\ldots} (3);
    \path (3) edge (4);
  \end{tikzpicture}
  
  is called a {\sl $k$-dandelion}. There are a total of $d-k$ vertices labelled ``s''.
(Here, the numbers above a vertex indicate the number of marked points on the corresponding component of the stable curve.)
  The {\sl stem} of the dandelion is the curve which is the union of components labelled ``s''. The {\sl flower} of the $k$-dandelion is the component corresponding to the vertex labelled ``f''.
\end{definition}

\begin{proposition}
  \label{proposition:wheredandelions}
  Let $\left[P  \right] \in \alpha(Z)$ be a $k$-dandelion with $k = d-(2r-2)\geq 4$. Then the the map $\alpha$ is unramified at every preimage of $\left[ P \right]$. Furthermore, every preimage of $\left[ P \right]$ projects, under $\beta$ to a point in $\Sigma^{[d]}$ lying over a $k$-incident line $[\ell] \in \G\left( 1,r \right)$ with multiplicity $(d-k+1,1, \dots, 1)$.  
\end{proposition}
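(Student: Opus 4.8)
The plan is to reduce both assertions to the earlier stable-reduction analysis (\autoref{proposition:stablereduction}, \autoref{proposition:limitedorigin}) together with the resolution statement \autoref{corollary:mainresolutioncorollary}. Fix a preimage $\zeta \in \alpha^{-1}([P]) \subset Z$ and set $\xi := \beta(\zeta) \in \Sigma^{[d]}$, lying over the line $[\ell] := s(\xi) \in \G(1,r)$. Since $\mu_{1}^{[d]}$ sends the open set $U$ into the interior $\m_{0,d}$ (the $d$ sections are distinct there), the boundary point $[P]$ can only be attained in a limit, so $\xi \notin U$ and $\ell$ is $k'$-incident for some $k' < d$. By genericity of $X$ (\autoref{proposition:finitek-incident}) there are no $k'$-incident lines with $k' < k$, hence $k' \ge k$.

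Next I would realize $[P]$ as a stable reduction. Because $\zeta$ lies in the closure of the graph and $\Sigma^{[d]}$ is smooth at $\xi$ (\autoref{proposition:smoothSigmad}), I can choose a smooth curve germ $B \to \Sigma^{[d]}$ through $\xi = b$ whose general point maps into $U$ and along which $\mu_{1}^{[d]} \to [P]$. The induced family of $d$-pointed rational curves then has $[P]$ as its stable limit, and by \autoref{proposition:stablereduction} this stable limit is precisely the stable replacement of the divisor $D := (\bigcup_i \sigma_i) \cap \P^1_b$, which is exactly $\ell \cap X$ regarded as a divisor on $\ell$. In particular $D$ is supported on $k'$ points, with multiplicities equal to the incidence multiplicities of $\ell$.

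Now I would feed the dandelion structure into \autoref{proposition:limitedorigin}. A $k$-dandelion is a stable curve of the form $\P^1$ (the flower, carrying $k-1$ marked points) together with a single chain (the stem) carrying $d-k+1 > 1$ marked points in total, every stem component having exactly three special points; this is exactly the hypothesis of \autoref{proposition:limitedorigin} with $n = k-1$ and one chain ($k - n = 1$). Part (1) of that proposition gives that $D$ is supported on $\le k$ points, i.e. $k' \le k$, so combined with $k' \ge k$ we conclude $k' = k$ and $\ell$ is $k$-incident. Since $k \ge 4$, part (2) then forces $D = p_1 + \dots + p_{k-1} + (d-k+1)q_1$, so $\ell$ has multiplicity $(d-k+1, 1, \dots, 1)$; this establishes the second assertion for every preimage $\zeta$.

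Finally, the unramifiedness is immediate: having shown that $\xi$ lies over a $k$-incident line, $\xi$ is one of the points $\hat{[\ell]}$ to which \autoref{corollary:mainresolutioncorollary} applies, and that corollary asserts $\alpha$ is unramified at every point of $\beta^{-1}(\hat{[\ell]})$, in particular at $\zeta$. The only point requiring genuine care --- and hence the main obstacle --- is the second step: producing an arc $B$ whose stable reduction is governed by \autoref{proposition:stablereduction} and verifying that the dual graph of the resulting stable limit is literally the dandelion shape feeding \autoref{proposition:limitedorigin}, rather than some further degeneration with extra contracted components. The hypotheses $k \ge 4$ and the persistence of $\P^1_b$ (\autoref{remark:stablereductionprocess}) are exactly what keep this under control.
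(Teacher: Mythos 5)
Your argument is correct and follows the same route as the paper's (much terser) proof: the paper likewise combines the absence of $k'$-incident lines for $k'<k$ with part (2) of \autoref{proposition:limitedorigin}, and the unramifiedness is exactly \autoref{corollary:mainresolutioncorollary} as you say. The only remark is that your closing worry is unnecessary: you do not need to verify that the stable limit along your arc has the dandelion dual graph --- that limit equals $\alpha(\zeta)=[P]$ by separatedness of $\barm_{0,d}$, and $[P]$ is a dandelion by hypothesis.
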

\begin{proof}
  The hypersurface $X$ does not have any $k'$-incident lines with $k' < k$.  Therefore, the proposition follows from point $2$ of \autoref{proposition:limitedorigin}.  
\end{proof}

\begin{proposition}
  \label{proposition:imagedandelion}
Suppose $k = d-(2r-2) \geq 3$. Then there exists a $k$-dandelion in the image of $\alpha$. 
\end{proposition}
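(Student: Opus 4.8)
The plan is to exhibit a one-parameter degeneration of line sections whose stable limit is a $k$-dandelion, to realize it inside $\Sigma^{[d]}$ via versality, and to conclude by properness of $\barm_{0,d}$.

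Set $n := d-k+1 = 2r-1$. I would first produce a $k$-incident line of multiplicity type $(n,1,\dots,1)$ (with $k-1$ ones). This partition satisfies $\sum_i(m_i-1)=n-1=2r-2$, so \autoref{lemma:Ikirreducible} makes $I_k(n,1,\dots,1)$ irreducible, and, exactly as in the proof of \autoref{corollary:versalkincident}, it maps generically finitely and dominantly onto $\P^N$. Hence a general $X$ has a $k$-incident line $[\ell]$ meeting $X$ in one point of multiplicity $n$ and $k-1$ simple points; fix $\hat{[\ell]}\in s^{-1}([\ell])$.

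By \autoref{corollary:versalkincident} the correspondence is versal at $[\ell]$, and for the type $(n,1,\dots,1)$ the versal base is $\prod_j \A^{m_j-1}=\A^{n-1}$; since $\dim\G(1,r)=2r-2=n-1$ this map is in fact a local isomorphism. Thus, by \autoref{proposition:smoothSigmad}, $\Sigma^{[d]}$ is smooth at $\hat{[\ell]}$ and \'etale-locally isomorphic there to $\Delta_n$, with the $d$ tautological sections recording the $n$ colliding roots $r_1,\dots,r_n$ (with $\sum r_j=0$) together with the $k-1$ fixed simple roots. In particular I may choose a smooth arc $B\to\Sigma^{[d]}$ through $\hat{[\ell]}$ realizing any prescribed degeneration of the $n$ roots.

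The crux is the choice of arc. I would take the caterpillar degeneration $x_j(t)=t^{\,n-j+1}$ for $j=1,\dots,n$ (translated so that $\sum_j x_j=0$), for which, writing $v$ for the order of vanishing in $t$, one has $v(x_i-x_j)=n-\max(i,j)+1$, so that the clusters nest strictly as $\{x_1\}\subset\{x_1,x_2\}\subset\cdots\subset\{x_1,\dots,x_n\}$. Running the stable reduction of \autoref{proposition:stablereduction}, the fiber $\P^1_b$ persists (since $k\geq3$, see \autoref{remark:stablereductionprocess}) carrying the $k-1$ simple roots, and the $n-1=d-k$ distinct orders of contact contribute one component each to a chain $P_1\cup\cdots\cup P_{d-k}$ attached to $\P^1_b$ at the single node where the $n$ roots collided: the root $x_{n-j+1}$ peels off on $P_j$ for $j<d-k$, while the last two roots $x_1,x_2$ land together on the terminal $P_{d-k}$. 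Comparing with \autoref{definition:k-dandelion}, this stable curve is precisely a $k$-dandelion. The delicate point — and the one deserving care — is to check that the intermediate blowups between successive orders of contact produce only two-pointed components, which are contracted, so that exactly these $d-k$ scales survive in the stable model.

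Finally, since $\barm_{0,d}$ is proper, $\mu_1^{[d]}|_{B\setminus\{b\}}$ extends over $b$ with value the stable limit just computed; equivalently, the graph over $B\setminus\{b\}$ lifts through the proper projection $\beta$ to a morphism $B\to Z$. The image of $b$ is a point of $Z$ lying over $\hat{[\ell]}$ whose $\alpha$-image is the $k$-dandelion, which therefore belongs to the image of $\alpha$.
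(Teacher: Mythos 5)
Your proof is correct and follows the same strategy as the paper: produce a one-parameter arc in the local chart $\Delta_{d-k+1}$ of $\Sigma^{[d]}$ at a point lying over a $k$-incident line of multiplicity $(d-k+1,1,\dots,1)$ (available by versality and the dimension count), and verify that its stable reduction is a $k$-dandelion. Your particular arc $x_j(t)=t^{n-j+1}$, recentered to sum zero, in fact reproduces the dual graph of \autoref{definition:k-dandelion} exactly --- one root peeling off at each successive scale, with the last two landing on the terminal stem component --- whereas the arc printed in the paper separates two roots (namely $t$ and $-(t+\cdots+t^{d-k})$) already at the first scale and so puts a component with two marked points adjacent to the flower; your choice of exponents is, if anything, the cleaner one.
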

\begin{proof}
  Consider the one-parameter family of polynomials $(z-t)(z-t^{2})\dots(z-t^{d-k})(z+t+t^{2}+\dots+t^{d-k})$ viewed as a map $\A^{1} \to \Delta_{d-k+1}$ given by $(r_{1}, \dots ,r_{d-k+1}) = (t,t^{2}, \dots, t^{d-k}, -t-t^{2}- \dots- t^{d-k})$.  If $\ell$ is a $k$-incident line of $X$ with multiplicities $(d-k+1,1,\dots,1)$ then $\Delta_{d-k+1}$ is a local chart  for any point $\hat{[\ell]} \in \Sigma^{[d]}$ lying above $\ell$, and therefore the one-parameter family above is realized, \'etale locally, as a curve on $\Sigma^{[d]}$.  Applying stable reduction to this one parameter family yields a $k$-dandelion, as the reader can easily verify. 
\end{proof}

We have all ingredients to prove the $m=1$ case of \autoref{theorem:main}.

\begin{theorem}
  \label{theorem:genericinjectivity}
  Suppose $d \geq 2r+2$, and $X \subset \P^{r}$ is a general degree $d$ hypersurface. Then the moduli map 
  \begin{align*}
    \mu_{1} : \G\left( 1,r \right) \dashrightarrow \Hyp(d,1)
  \end{align*}
  is generically injective.
\end{theorem}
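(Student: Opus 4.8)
The plan is to prove generic injectivity by showing $\deg \mu_1 = 1$. Since $\dim \Hyp(d,1) = d-3 \geq 2r-1 > 2r-2 = \dim \G\left(1,r\right)$, a dimension count shows $\mu_1$ is generically finite onto its image, so generic injectivity is equivalent to this degree statement. First I would pass to the ordered model $\mu_1^{[d]}\colon \Sigma^{[d]} \dashrightarrow \barm_{0,d}$. The square relating $\mu_1$ and $\mu_1^{[d]}$ has vertical maps $s$ and $q\colon \barm_{0,d}\to \Hyp(d,1)$ both of degree $d!$; comparing fibers over a general point of the image, where the $d$ intersection points are distinct and the configuration has trivial automorphisms, gives a bijection between the fiber of $\mu_1$ and that of $\mu_1^{[d]}$, hence $\deg \mu_1 = \deg \mu_1^{[d]}$.

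Next I would compute $\deg \mu_1^{[d]}$ using the resolution $\alpha\colon Z \to \barm_{0,d}$, $\beta\colon Z \to \Sigma^{[d]}$. Since $\Sigma^{[d]}$ is projective, $\alpha$ is proper, and it is generically finite of degree $\deg \mu_1^{[d]}$ onto its image. I would evaluate this degree at a $k$-dandelion $[P] \in \alpha(Z)$, which exists by \autoref{proposition:imagedandelion}. By \autoref{proposition:wheredandelions} the map $\alpha$ is unramified at every point of $\alpha^{-1}([P])$, and the fiber is finite, so conservation of number gives $\deg \mu_1^{[d]} = |\alpha^{-1}([P])|$.

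The heart of the argument is then to show $|\alpha^{-1}([P])| = 1$. By \autoref{proposition:wheredandelions}, every preimage $z$ projects under $\beta$ to a point $\widehat{[\ell]} \in \Sigma^{[d]}$ lying over a $k$-incident line $\ell$ of multiplicity $(d-k+1,1,\dots,1)$, and $X$ has no $k'$-incident lines with $k' < k$ by \autoref{proposition:finitek-incident}. The dandelion $[P]$ determines, via its flower component (the persistent fiber $\P^1_b \cong \ell$), the set-theoretic intersection $X \cap \ell$: the $k-1$ marked points of the flower are the reduced points of $X\cap\ell$ and the node is the location of the multiplicity-$(d-k+1)$ point. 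Since $k \geq 4$ — this is exactly where $d \geq 2r+2$ is used — \autoref{corollary:distinct-cross-ratios} shows that at most one $k$-incident line of $X$ realizes this configuration; hence all preimages of $[P]$ lie over a single line $\ell$. Over $\ell$, the labelling recorded by $[P]$ on the flower pins down the assignment of marks to the reduced points of $X\cap \ell$ and forces the remaining $d-k+1$ labels onto the multiple point, singling out a unique $\widehat{[\ell]} \in s^{-1}([\ell])$ (compare \autoref{lemma:numberpreimages}). Finally, \autoref{corollary:mainresolutioncorollary} says $\alpha$ is an isomorphism onto its image on a neighborhood $\beta^{-1}(V)$ of $\beta^{-1}(\widehat{[\ell]})$, so at most one point of $Z$ over $\widehat{[\ell]}$ maps to $[P]$. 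Combining these, $|\alpha^{-1}([P])| = 1$, whence $\deg \mu_1 = 1$.

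I expect the main obstacle to be the bookkeeping in the last paragraph: correctly translating the abstract pointed dandelion $[P]$ back into the embedded data of $X \cap \ell$ together with an ordering $\widehat{[\ell]}$, and checking that the unique-line input from \autoref{corollary:distinct-cross-ratios} combines with the local injectivity of $\alpha$ from \autoref{corollary:mainresolutioncorollary} with no residual overcounting coming from the internal structure of the stem. The conceptual crux, and the sole place the hypothesis $k\geq 4$ enters, is the appeal to \autoref{corollary:distinct-cross-ratios} to force a unique underlying line; for $k = 3$ this injectivity fails (three points carry no moduli), which is precisely why the degree in the boundary case $d = 2r+1$ is a genuine enumerative count rather than $1$.
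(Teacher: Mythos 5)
Your proposal is correct and follows essentially the same route as the paper: produce a $k$-dandelion in $\alpha(Z)$ via \autoref{proposition:imagedandelion}, use \autoref{corollary:distinct-cross-ratios} (this is where $k\geq 4$, i.e.\ $d\geq 2r+2$, enters) to pin down a unique underlying $k$-incident line of type $(d-k+1,1,\dots,1)$, use the marking on flower and stem to single out a unique point of $Z$, and conclude via the unramifiedness statement of \autoref{proposition:wheredandelions} that the fiber is a single reduced point, so $\deg\mu_{1}=1$. The extra bookkeeping you supply (properness of $\alpha$, the equality $\deg\mu_{1}=\deg\mu_{1}^{[d]}=\deg\alpha$, and the appeal to \autoref{corollary:mainresolutioncorollary} for local injectivity) is exactly what the paper leaves implicit.
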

\begin{proof}
  By \autoref{proposition:imagedandelion}, we know that there exists at least one $k$-dandelion $[P] \in \alpha(Z)$.

  If $\left[ P \right] \in \alpha(Z)$ is a $k$-dandelion, we have shown: The preimage $\alpha^{-1}([P])$ originates from  a unique $k$-incident line $\ell$, read off from the cross-ratios of the flower (\autoref{corollary:distinct-cross-ratios}), and the marking on the flower and stem of $[P]$ determines a point $\hat{[\ell]}$ and a point $z \in \beta^{-1}(\hat{[\ell]})$, respectively, showing that $\alpha^{-1}([P]) = z$. 
  
  We conclude from \autoref{proposition:wheredandelions} that the preimage of $[P]$ is the reduced point $z$, and from this the theorem follows.

\end{proof}

We now give the proof of \autoref{theorem:main}. 
\begin{proof}[Proof of \autoref{theorem:main}] 
By \autoref{theorem:genericinjectivity}, we know that the map $\mu_{1} : \G\left( 1,r \right) \dashrightarrow \Hyp(d,1)$ is generically injective. 

  Let $\Lambda \subset \P^{r}$ be a general $m$-plane.  The injectivity of $\mu_{1}$ implies that a general line $\ell \subset \Lambda$ will be general in the sense that there will be no other line $\ell'$ with $[X \cap \ell \subset \ell] \simeq [X \cap \ell' \subset \ell']$. 

  Therefore, if $\Lambda'$ is another $m$-plane with $[X \cap \Lambda \subset \Lambda] \simeq [X \cap \Lambda' \subset \Lambda']$, it must be that $\ell \subset \Lambda \cap \Lambda'$. Since this is true for a general $\ell \subset \Lambda$, we conclude that $\Lambda = \Lambda'$, which proves the theorem. 

\end{proof}

\subsection{Straight trees and the proof of \autoref{theorem:main2}}
\begin{notation}
  During this section, we assume $d = 2r+1$ and $X \subset \P^{r}$ a general hypersurface of degree $d$. 
\end{notation}

\begin{definition}
  \label{definition:straighttree}
  A {\sl straight tree} is any stable curve $(P,s_{1}, \dots, s_{d})$ having dual graph of the form 
  \tikzset{main node/.style={thick,circle,draw},}
  
  \begin{tikzpicture}
    \node[main node, label={\small $2$}] (1) {};
    \node[main node, label={\small $1$}] (2) [right = .7cm of 1] {};
	\node[main node, label={\small $1$}] (3) [right = .7cm of 2] {};

    \node[main node, label={\small $1$}] (4) [right = .7cm of 3] {};
    \node[main node, label={\small $2$}] (5) [right = .7cm of 4] {};

    \path (1) edge (2);
    \path (2) edge (3);
    \path (3)  -- node[auto=false]{\ldots} (4);
    \path (4) edge (5);
  \end{tikzpicture}

(There are a total of $d-2$ vertices.)
\end{definition}

\begin{lemma}
  \label{lemma:numberstraighttrees}
  Assume $d \geq 4$. There are $d!/8$ distinct straight trees.
\end{lemma}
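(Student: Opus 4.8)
The plan is to reduce the count to pure combinatorics and then correct for the one symmetry of the configuration. The key observation is that every component of a straight tree is a $\P^1$ carrying exactly three special points: each of the two end components carries two marked points plus one node, and each of the $d-4$ interior components carries one marked point plus two nodes. Since $\m_{0,3}$ is a single reduced point, each such component is rigid, so the isomorphism class of the whole stable curve is determined entirely by the combinatorial data of \emph{which} labelled marked points sit on \emph{which} component, together with the linear order in which the components are chained. Thus counting straight trees is the same as counting these combinatorial arrangements up to the automorphisms of the chain.

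First I would count \emph{oriented} straight trees, i.e. arrangements in which the chain of $d-2$ components is given a direction, so that the components occupy ordered positions $1, 2, \dots, d-2$ with positions $1$ and $d-2$ being the two-marked-point ends and positions $2, \dots, d-3$ the one-marked-point interior vertices. An oriented straight tree is specified by choosing the unordered pair of marked points on position $1$, then the unordered pair on position $d-2$, and finally distributing the remaining $d-4$ marked points among the interior positions in order. This gives
\[
  \binom{d}{2}\binom{d-2}{2}(d-4)! \;=\; \frac14\, d(d-1)(d-2)(d-3)(d-4)! \;=\; \frac{d!}{4}
\]
oriented straight trees.

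Next I would divide by the reflection symmetry of the chain. Reversing the orientation (swapping the two ends and reversing the interior order) produces the \emph{same} point of $\barm_{0,d}$, since the components are unlabelled and each $3$-pointed $\P^1$ has a unique isomorphism class. This involution on oriented straight trees is fixed-point-free: the two end components carry disjoint sets of marked points, so an oriented arrangement can never equal its own reversal. Hence the resulting $2$-to-$1$ quotient leaves exactly $\tfrac12\cdot\tfrac{d!}{4}=d!/8$ straight trees, as claimed. As a check, for $d=4$ the formula gives $3$, matching the three ways to partition four labelled points into two unordered pairs.

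The only delicate point, and the step I would treat most carefully, is the justification that one divides by exactly $2$: I must verify both that the chain's graph automorphism group is $\Z/2$ (true for $d \geq 4$, when the dual graph is a path on $d-2 \geq 2$ vertices) and that this reflection acts without fixed points, which relies precisely on the end components holding distinct marked points. No further identifications arise, since the two marked points on an end are already treated as an unordered pair (consistent with the rigidity of the $3$-pointed component) and each interior component carries a single marked point.
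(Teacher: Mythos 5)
Your proof is correct and is essentially the paper's argument: the paper divides $d!$ labelled arrangements by $2\times2\times2$ (swapping the pair on each end and flipping the chain), which is exactly your $\binom{d}{2}\binom{d-2}{2}(d-4)!/2$ with the first two factors of $2$ absorbed into the binomial coefficients. Your additional verification that the flip acts freely (and that the path's automorphism group is only $\Z/2$) is a worthwhile precision the paper leaves implicit, but it does not change the approach.
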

\begin{proof}
  Indeed, we may exchange the two points on each extremal component of a straght tree, or we may flip the entire tree over, without altering the moduli of the straight tree. Therefore, we must divide $d!$ by $2 \times 2 \times 2$. 
\end{proof}

\begin{lemma}
  \label{lemma:wherestraighttrees}
  Suppose $z \in Z$ is such that $\alpha(z) \in \barm_{0,d}$ is a straight tree. Then $\hat{[\ell]} := \beta(z) \in \Sigma^{[d]}$ lies over a $3$-incident line $\ell$. Furthermore, the multiplicity of $\ell$ is $(1,a,b)$, where $a+b = d-1$.
\end{lemma}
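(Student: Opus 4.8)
\textbf{Proof proposal for Lemma \ref{lemma:wherestraighttrees}.}

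The plan is to run the stable reduction analysis of \autoref{proposition:stablereduction} and \autoref{proposition:limitedorigin} in reverse, applied to the specific dual graph of a straight tree. In our current setting $d = 2r+1$, we have $k = d - (2r-2) = 3$, so the relevant lines are the $3$-incident lines, and a general such $X$ has no $k'$-incident lines for $k' < 3$ by \autoref{proposition:finitek-incident}. The point $z \in Z$ corresponds, via $\beta$, to a point $\hat{[\ell]} \in \Sigma^{[d]}$ lying over some line $[\ell]$, and the divisor $D = (\bigcup_i \sigma_i) \cap \P^1_b$ encodes the multiplicity vector of $\ell$ as in \autoref{notation5}. The straight tree $\alpha(z)$ is, by construction, the stable replacement of the appropriate one-parameter degeneration, so I want to read off from its dual graph exactly how many points $D$ is supported on, and with what multiplicities.

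First I would examine the dual graph of a straight tree. It is a chain of $d-2$ components: the two extremal components carry two marked points each, and the $d-4$ interior components carry one marked point each. The key structural observation is that exactly one interior component plays the role of the persisting fiber $\P^1_b$ in the stable reduction picture of \autoref{remark:stablereductionprocess} --- since $k = 3$, the original fiber $\P^1_b$ naturally persists as a component of the stable reduction. I would identify $\P^1_b$ as the component meeting the two ``halves'' of the chain; on this component $D$ acquires the marked points lying directly on it, while each of the two rational tails branching off must, by \autoref{proposition:stablereduction}, correspond to a point $q_i$ of $D$ of multiplicity $a_i > 1$. This forces $D$ to be supported on exactly three points: one reduced point $p_1$ lying on $\P^1_b$ (the interior marking on the component serving as $\P^1_b$), and two points $q_1, q_2$ of multiplicities $a, b > 1$, where the tails attached at $q_1$ and $q_2$ carry $a$ and $b$ marked points respectively. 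Since the total number of marked points is $d$ and $1 + a + b = d$, we get $a + b = d-1$, giving multiplicity vector $(1, a, b)$.

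The main step requiring care is verifying that $\P^1_b$ is correctly located as an \emph{interior} component carrying a single marking, rather than an extremal one, and that the two branching tails are precisely the two ``arms'' of the straight tree. Here I would invoke \autoref{proposition:limitedorigin}: since $k = 3 \geq 3$, its point (1) guarantees $D$ is supported on $\leq k = 3$ points. To see that it is supported on exactly three, I would argue that the stable tree of a straight tree is genuinely a chain with two nontrivial tails, so at least two distinct branch points $q_i$ with $a_i > 1$ are required, plus the persisting marking $p_1$ on $\P^1_b$; fewer than three support points would produce a stable reduction with a different (smaller or differently-shaped) dual graph, contradicting that $\alpha(z)$ is a straight tree. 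This pins down the support at exactly three points and hence identifies $\ell$ as $3$-incident with multiplicity $(1,a,b)$, $a+b = d-1$. The hardest part is the bookkeeping matching the chain structure of the straight tree to the tails produced by blowing up the singularities $q_i$ of $C = \bigcup \sigma_i$; but this is precisely the content of the stable reduction procedure recalled in \autoref{proposition:stablereduction}, so once the dictionary is set up the conclusion is immediate.
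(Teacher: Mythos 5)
Your overall strategy is the same as the paper's (which simply cites \autoref{proposition:limitedorigin} and \autoref{remark:stablereductionprocess} after noting that $X$ has no $2$-incident lines), but your execution contains a genuine error at exactly the step you flag as ``the main step requiring care.'' You assert that the persisting fiber $\P^{1}_{b}$ must be an \emph{interior} component of the chain carrying a single marked point, with two tails branching off, each carrying $a_{i}>1$ markings; this would force the multiplicity vector to be $(1,a,b)$ with $a,b>1$. That is false. A straight tree also arises when $\P^{1}_{b}$ is an \emph{extremal} component of the chain: then $n=2$ (the two reduced points $p_{1},p_{2}$ of $D$ are the two markings on that extremal component) and there is a single tail carrying $d-2$ markings, corresponding to multiplicity $(1,1,d-2)$. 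This case genuinely occurs --- it is the content of \autoref{lemma:11d-2} and produces the $4n_{d-2,1,1}$ term in \autoref{theorem:main2} --- so your claim that ``at least two distinct branch points $q_{i}$ with $a_{i}>1$ are required'' contradicts the rest of the paper.

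The lemma as stated survives, because its conclusion only requires that the multiplicity vector not be $(a,b,c)$ with all entries $\geq 2$, and your proof does contain the correct reason for that: such a vector would force $\P^{1}_{b}$ to carry zero markings and three nodes, i.e.\ a trivalent vertex, which a chain does not have. The repair is to drop the insistence that $\P^{1}_{b}$ is interior: use \autoref{proposition:finitek-incident} to get that $D$ is supported on at least $3$ points, \autoref{proposition:limitedorigin}(1) (with the chain/tail decomposition of the straight tree, for either placement of $\P^{1}_{b}$) to get at most $3$, and then observe that $n\geq 1$ because the dual graph is a chain; this yields $(1,a,b)$ with $a+b=d-1$ and no constraint that $a,b>1$.
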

\begin{proof}
  $X$ possesses no $2$-incident lines.  The lemma follows from \autoref{proposition:limitedorigin}, and from the observation in \autoref{remark:stablereductionprocess}.
\end{proof}

\begin{lemma}
  \label{lemma:existsstraighttrees}
  Let $a,b$ be positive integers such that $a+b=d-1$. Then for every $3$-incident line $\ell$ of multiplicity $(1,a,b)$, and every point $\hat{[\ell]} \in \Sigma^{[d]}$ lying over $\ell$, there are precisely $a!b!/4$ points $z \in \beta^{-1}(\hat{[\ell]})$ such that $\alpha(z)$ is a straight tree. 
\end{lemma}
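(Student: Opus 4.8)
### Setup and strategy

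The plan is to count the points $z \in \beta^{-1}(\hat{[\ell]})$ that map under $\alpha$ to a straight tree, by tracing through the stable reduction procedure of \autoref{proposition:stablereduction} applied to local one-parameter families through $\hat{[\ell]}$. Since $\ell$ is $3$-incident with multiplicity $(1,a,b)$, \autoref{corollary:versalkincident} tells us $\Sigma^{[d]}$ is \'etale-locally isomorphic near $\hat{[\ell]}$ to $\Delta_{1} \times \Delta_{a} \times \Delta_{b}$, where the $\Delta_1$ factor is a point (since $m_1 = 1$) and records the simple marked point, while $\Delta_a$ and $\Delta_b$ are the mini-versal deformations of the two singular points $z^a = 0$ and $z^b = 0$. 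The central fiber is the divisor $D = p + aq_1 + bq_2$ on $\P^1_b$.

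### Reduction to the two tails

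First I would observe that a straight tree has its two extremal ``$2$''-labelled components carrying two marked points each, and a connecting chain of singly-marked components; crucially, the whole configuration is a chain (no branching). By \autoref{proposition:limitedorigin} and \autoref{lemma:wherestraighttrees}, the central $\P^1_b$ persists, and the two tails $T_1, T_2$ resolving the singularities at $q_1$ and $q_2$ must \emph{each} be chains. So the straight tree splits as: the persisting $\P^1_b$ carrying the simple point $p$ plus the two nodes to $T_1, T_2$, with $T_1$ a chain absorbing the $a$ collided points and $T_2$ a chain absorbing the $b$ collided points. This reduces the count to a product: the number of straight trees over $\hat{[\ell]}$ equals (ways to realize the $a$-point chain $T_1$) $\times$ (ways to realize the $b$-point chain $T_2$), where by ``ways'' I mean the number of germs of one-parameter families in $\Delta_a$ (resp.\ $\Delta_b$) through the origin whose stable reduction produces a single chain with the prescribed marked-point distribution.

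### The local chain count

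The heart of the argument is the local count in each $\Delta_n$ factor (with $n = a$ or $n = b$): how many (combinatorial types of) directions in $\Delta_n = \Spec k[r_1,\dots,r_n]/(r_1+\dots+r_n)$ produce a chain under stable reduction, and with what marked-point labelling. Following the recipe of \autoref{proposition:stablereduction}, one resolves the singularity by blowing up; a generic one-parameter family with all $n$ roots colliding at first order produces a single rational tail with all $n$ points distinct on it (a single $\P^1$, not a chain), whereas families where roots collide at different \emph{orders} (as in the $t, t^2, \dots$ construction of \autoref{proposition:imagedandelion}) produce genuine chains. I expect that the number of labelled straight-tree germs emanating from a fixed $\hat{[\ell]}$, summed over the ordering of the $n$ marked points on the chain, is $n!/2$ — the $n!$ from ordering the marked points along the chain and the division by $2$ from the flip symmetry of each terminal component, exactly mirroring the $d!/8$ bookkeeping in \autoref{lemma:numberstraighttrees}. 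Multiplying the two factors gives $(a!/2)(b!/2) = a!b!/4$, the claimed count.

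### Main obstacle

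The main obstacle is making the local chain count rigorous rather than heuristic: I must show that the one-parameter families in $\Delta_n$ producing a \emph{chain} (as opposed to a more branched tail, or a single component) form exactly the expected combinatorial set, and that each such germ of direction produces its chain with a well-defined marking, up to the stated symmetries. Concretely this means analyzing which arcs $(r_1(t),\dots,r_n(t))$ into $\Delta_n$ yield, after the blow-up resolution, a linear chain with one marked point per interior component and two on each end; I expect this to come down to classifying the possible valuation/collision patterns of the $r_j(t)$ and checking that precisely the ``fully refined'' patterns give chains, with the overcounting controlled by the terminal-component flip. Once this local statement is established, assembling it across the $\Delta_a$ and $\Delta_b$ factors and invoking the \'etale-local product structure from \autoref{corollary:versalkincident} is routine, so the entire difficulty is concentrated in the single-factor chain enumeration.
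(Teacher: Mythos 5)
Your decomposition and final count are the same as the paper's: the central $\P^{1}$ persists carrying the simple marked point and two nodes, each of the two tails must be a chain with one marked point per component except for two on the far end, and there are $a!/2$ (resp.\ $b!/2$) labelled chains of each type, giving $a!b!/4$. Where you genuinely differ is in how you access the fiber $\beta^{-1}(\hat{[\ell]})$: you propose to enumerate arcs into $\Delta_{a}\times\Delta_{b}$ and classify which collision patterns of the roots yield chains under stable reduction, and you rightly flag that classification as the unproven core of your write-up. The paper sidesteps this entirely by describing the fiber statically: $\beta^{-1}(\hat{[\ell]})$ is the collection of \emph{all} stable curves obtained by attaching an arbitrary $a$-pointed rational tail at the multiplicity-$a$ point of $\ell$ and an arbitrary $b$-pointed tail at the multiplicity-$b$ point. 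Granting that description, the straight trees in the fiber are exactly the chain-type boundary points, each of whose components has three special points and is therefore rigid, so the count is the purely combinatorial $(a!/2)(b!/2)$, the two divisions by $2$ coming from exchanging the pair of marked points on each extremal component. Your ``main obstacle'' thus dissolves once the fiber is identified this way (which follows from the \'etale-local product structure you already invoke together with \autoref{corollary:localisomorphismontoimage}); if you insist on the arc route instead, you would additionally need to check that counting collision patterns counts points of $Z$ rather than germs, i.e.\ that distinct arcs with the same stable limit define the same point of $\beta^{-1}(\hat{[\ell]})$, an extra step the paper's static description makes unnecessary.
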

\begin{proof}
  Let $p, q$ be the points of multiplicity $a$ and $b$ on $\ell$. Then the variety $\beta^{-1}(\hat{[\ell]})$ is the collection of stable curves obtained by attaching all rational tails with $a$ marked points to the point $p$, and similarly with $q$. The lemma follows, after recalling that the two extremal points in a straight tree may be exchanged without changing the moduli of the tree -- this explains the division by $4$.  
\end{proof}

\begin{lemma}
  \label{lemma:11d-2}
  For every $3$-incident line $\ell$ of multiplicity $(1,1,d-2)$, and every point $\hat{[\ell]} \in \Sigma^{[d]}$ lying over $\ell$, there are precisely $(d-2)!/2$ points $z \in \beta^{-1}(\hat{[\ell]})$ such that $\alpha(z)$ is a straight tree.

\end{lemma}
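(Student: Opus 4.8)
The plan is to rerun the analysis from the proof of \autoref{lemma:existsstraighttrees}, the only difference being a careful accounting of why the multiplicity pattern $(1,1,d-2)$ produces a division by $2$ rather than by $4$. First I would fix $\hat{[\ell]}$ over the line $\ell$ of multiplicity $(1,1,d-2)$ and invoke the local description of $\Sigma^{[d]}$ near $\hat{[\ell]}$ from \autoref{corollary:mainresolutioncorollary}: the two multiplicity-$1$ points contribute trivial factors $\Delta_1$ (a reduced point), so étale-locally $\Sigma^{[d]}$ is isomorphic to $\Delta_{d-2}$, and $\beta^{-1}(\hat{[\ell]})$ is exactly the collection of stable curves obtained by attaching a single rational tail, carrying the $d-2$ marked points selected by $\hat{[\ell]}$, at the multiplicity-$(d-2)$ point of $\ell$, while the two multiplicity-$1$ marked points remain on $\P^1_b$. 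This is the same reasoning as in \autoref{lemma:existsstraighttrees}, now with one fat point instead of two.

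Next I would single out which of these stable curves are straight trees. Since $\P^1_b$ carries two marked points and meets the rest of the curve in a single node, \autoref{proposition:stablereduction} together with the shape of a straight tree (\autoref{definition:straighttree}) forces $\P^1_b$ to be an extremal component of the dual graph; the attached tail must then be a chain $C_1,\dots,C_{d-3}$ with $C_1$ meeting $\P^1_b$, where $C_1,\dots,C_{d-4}$ each carry one marked point and the far component $C_{d-3}$ carries two. A length check confirms $1+(d-3)=d-2$ components, as a straight tree requires. Finally I would count: the two marks on $\P^1_b$ are pinned down by $\hat{[\ell]}$, so the only freedom lies in distributing the $d-2$ tail labels along the chain, namely ordering $d-4$ of them along $C_1,\dots,C_{d-4}$ and placing the remaining unordered pair on $C_{d-3}$. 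This gives $\binom{d-2}{2}(d-4)! = (d-2)!/2$ distinct stable curves, hence distinct points of $\beta^{-1}(\hat{[\ell]})$.

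The step I expect to be the genuine subtlety — and the whole reason this statement is isolated from \autoref{lemma:existsstraighttrees} — is the bookkeeping of the symmetry factor. In the $(1,a,b)$ situation with $a,b>1$, both extremal components of the straight tree arise as far ends of tails, each contributing an unordered pair and thus a combined division by $2\times 2 = 4$. Here, by contrast, only one extremal component comes from a tail; the other extreme is $\P^1_b$ itself, whose two marked points are already fixed by the choice of $\hat{[\ell]}$ and admit no exchange freedom to quotient by. Keeping this asymmetry straight is what yields the answer $(d-2)!/2$ rather than the naive $(d-2)!/4$ obtained by formally setting $a=1$ in the earlier formula.
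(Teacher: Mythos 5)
Your proposal is correct and follows essentially the same route as the paper, which simply notes that the argument of \autoref{lemma:existsstraighttrees} goes through with a single rational tail attached at the multiplicity-$(d-2)$ point, so that the symmetry division by $2$ occurs only once. Your explicit count $\binom{d-2}{2}(d-4)! = (d-2)!/2$ and your identification of $\P^{1}_{b}$ (carrying the two pinned simple marked points) as the extremal component with no exchange freedom are exactly the bookkeeping the paper's one-line proof is gesturing at.
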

\begin{proof}
The proof is completely analogous to the proof of the previous lemma. The only difference is that we are only attaching rational tails to {\sl one} point on $\ell$, hence we must divide by two only once.

  \end{proof}

  \begin{theorem}
    \label{theorem:enumerativtheoremmain}
    For any multiplicity $(a,b,c)$, let $n_{a,b,c}$ denote the number of $3$-incident lines to $X$ having the given multiplicity. 
    Then the degree of the map $\alpha: Z \to \barm_{0,d}$ is
    \begin{align*}
      \deg \alpha = 2 \sum_{a \geq b > 1}^{}n_{(a,b,1)} + 4n_{d-2,1,1}.
    \end{align*}
   
    Furthermore, $\deg \alpha = \deg \mu_{1}$. 
  \end{theorem}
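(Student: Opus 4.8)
The plan is to compute $\deg \alpha$ by choosing a generic $k$-dandelion-like target—namely a straight tree—and counting its preimages under $\alpha$, using the fact (from \autoref{corollary:mainresolutioncorollary}) that $\alpha$ is unramified at these preimages, so the number of preimages of a generic straight tree equals $\deg \alpha$. Since $d = 2r+1$, we have $k = d-(2r-2) = 3$, so the relevant lines are exactly the $3$-incident lines, and by \autoref{lemma:wherestraighttrees} every $z \in Z$ whose image $\alpha(z)$ is a straight tree must lie over a $3$-incident line $\ell$ with multiplicity $(1,a,b)$, $a+b = d-1$.

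First I would assemble the count. By Lemmas \ref{lemma:existsstraighttrees} and \ref{lemma:11d-2}, for each $3$-incident line $\ell$ of multiplicity $(1,a,b)$ with $a,b > 1$ there are $a!b!/4$ points of $\beta^{-1}(\hat{[\ell]})$ mapping to straight trees, while for multiplicity $(1,1,d-2)$ there are $(d-2)!/2$ such points. However, $\deg \alpha$ is the number of \emph{preimages of a single generic straight tree}, not the total number of $z$ mapping to \emph{some} straight tree. So the key bookkeeping step is to reconcile these two counts: the total number of straight-tree-images is $\deg\alpha$ times the number of distinct straight trees, which is $d!/8$ by \autoref{lemma:numberstraighttrees}. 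For a fixed line $\ell$ of type $(1,a,b)$, the $a!b!/4$ points over $\hat{[\ell]}$ correspond to the ways of distributing marked points on the two stems; summing over the $d!/(a!\,b!)$ choices of $\hat{[\ell]} \in s^{-1}([\ell])$ (\autoref{lemma:numberpreimages}) and noting how these group into distinct straight trees should yield that each $3$-incident line of type $(a,b,1)$ with $a \geq b > 1$ contributes $2$ to $\deg\alpha$, and each line of type $(d-2,1,1)$ contributes $4$.

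To make this precise I would fix one generic straight tree $[P]$ and directly count $\alpha^{-1}([P])$. Each preimage determines, via the cross-ratios on its two extremal components (the ``flower-like'' ends) together with \autoref{corollary:distinct-cross-ratios}, a unique $3$-incident line $\ell$; the multiplicity data of the two marked extremal components forces the multiplicity type of $\ell$. For a type $(a,b,1)$ tree with $a \geq b > 1$ the two extremal $2$-marked components carry the multiplicities $a$ and $b$, and the symmetry of flipping the tree (already accounted for in the $d!/8$ count) means a single abstract straight tree arises from $2$ genuinely different geometric configurations of $\ell$, giving the factor $2$; for type $(d-2,1,1)$ the degeneration of having two singleton ends removes a symmetry and produces the factor $4$. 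Summing over all types gives the stated formula, and $\deg\alpha = \deg\mu_1$ follows because $\beta: Z \to \Sigma^{[d]}$ is birational and $\Sigma^{[d]} \to \G(1,r)$ is generically $d!$-to-one while $\mu_1$ factors through the $S_d$-quotient, so the degree is preserved after descending from the ordered to the unordered moduli (the normalizing factors cancel).

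The main obstacle will be the combinatorial reconciliation in the second step: carefully matching the ``number of $z$ over a fixed line mapping to some straight tree'' (Lemmas \ref{lemma:existsstraighttrees}, \ref{lemma:11d-2}) against the ``number of preimages of a single fixed straight tree'' (which is $\deg\alpha$), correctly tracking the symmetry factors $2,2,2$ in \autoref{lemma:numberstraighttrees} and how they interact with the $S_a \times S_b$ stabilizer structure from \autoref{lemma:numberpreimages}. The geometry—unramifiedness of $\alpha$ and the identification of which lines produce straight trees—is already in hand; the delicate point is ensuring no overcounting or undercounting occurs when a straight tree's intrinsic automorphisms coincide with the symmetry of swapping marked points on its extremal components, which is precisely what distinguishes the factor $2$ case from the factor $4$ case.
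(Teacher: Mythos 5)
Your proposal is correct and takes essentially the same approach as the paper: both count preimages of straight trees using \autoref{lemma:wherestraighttrees}, the unramifiedness of $\alpha$ at these preimages, and the averaging computation $\frac{d!}{a!b!}\cdot\frac{a!b!}{4} \div \frac{d!}{8} = 2$ (resp.\ $4$ for type $(d-2,1,1)$) via Lemmas \ref{lemma:numberpreimages}, \ref{lemma:existsstraighttrees}, \ref{lemma:11d-2}, and \ref{lemma:numberstraighttrees}. The paper's proof is exactly your second paragraph made explicit, invoking symmetry to assert that every straight tree has the same number of preimages.
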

  \begin{proof}
    The last statement is clear. 
    
    By symmetry considerations, every straight tree $[P] \in \barm_{0,d}$ has the same number of preimages in $Z$, all preimages arise from $3$-incident lines, by \autoref{lemma:wherestraighttrees}, and all preimages are unramified points of $\alpha$, by \autoref{corollary:localisomorphismontoimage}. 
    Above a $3$-incident line $\ell$ with multiplicity $(a,b,1)$, where $a,b>1$, there are $\frac{d!}{a!b!}$ points in $s^{-1}([\ell])$, by \autoref{lemma:numberpreimages}. Each contributes $a!b!/4$ straight trees by \autoref{lemma:existsstraighttrees}. Therefore, we obtain $\frac{d!}{a!b!}\frac{a!b!}{4} = \frac{d!}{4}$ total straight trees arising from $3$-incident lines with multiplicities $(a,b,1)$, where $a,b > 1$.  Since there are $\frac{d!}{8}$ straight trees total (\autoref{lemma:numberstraighttrees}), we see that each such line contributes $2$ to the degree of $\alpha$. 
    The same argument, using \autoref{lemma:11d-2} explains the coefficient $4$ for lines of type $(d-2,1,1)$. 
  \end{proof}

\section{Further questions} There are many remaining questions worth understanding. 

For instance, there is the question of extending \autoref{theorem:monodromymain} to the {\sl equi-multiplicity} setting, i.e. where the multiplicity vector is $(m,m,\dots,m)$ for some integer $m$.  Recall the situation for plane curves:  The monodromy groups of flexes of a plane cubic, and of bitangents of a plane quartic are not the respective full symmetric groups.  Does this special phenomenon persist in higher dimensions? The first open case is to determine the monodromy group of $5$-tangent lines of a general quintic surface in $\P^{3}$. 

\begin{remark}
  \label{remark:error}
  There is one instance in the literature worth noting. In \cite{dsouza}, it is argued that the monodromy group of the set of $4$-incident lines of type  $(2,2,2,2)$  to an octic surface $X \subset \P^{3}$ is the full symmetric group.  Unfortunately, it appears (to the author) that there is a small gap in the proof -- in the proof of Lemma 2.8, a particular variety $I_{8}$ is claimed to be locally irreducible, {\sl because it is irreducible}. This is false as it stands. 

  However, the author believes the result is still true, and that D'Souza's argument can be made to work. 
  \end{remark}

We have settled the enumerative question $(3)$ raised in the introduction in the case $m=1$ of lines.  However, there are many more instances where the preconditions of question $(3)$ are met. We give the following interesting open example: 
\begin{example}
  \label{example:quarticthreefold}
  Let $X_{4} \subset \P^{4}$ be a general quartic threefold.  What is the degree of the rational map 
  \begin{align*}
    \mu_{2}: \G\left( 2,4 \right) \dashrightarrow \m_{3}?
  \end{align*}
\end{example}
 
\begin{remark}
  \label{remark:approach}
  One possible approach, similar to that pursued in this paper, would be to count the number of ``complete quadrilaterals'' on $X_{4}$ -- these are plane curves which are the union of four general lines. The stable reduction procedure would also have to be analyzed. 
\end{remark}

Other enumerative puzzles emerge. 

\begin{example}
  \label{example:planesextic}
  Let $X_{6} \subset \P^{2}$ be a general plane sextic. Then the moduli map $\mu_{1}: \P^{2*} \dashrightarrow \m_{0,6}/S_{6}$ is generically injective.  Since the target is three dimensional, we expect the double-point locus of $\mu_{1}$ to be a curve $Y \subset \P^{2*}$.  Is this the case? 

  If so, the assignment $X \mapsto Y$ is a contra-invariant of ternary sextics -- what are the order and degree of this contra-invariant?
\end{example}

\begin{example}
  \label{example:Fermat}
  Cadman and Laza \cite{cadmanlaza} show that the degree of $\mu_{1}$ for a plane Fermat quintic is $150$. What is the degree in higher dimension, e.g. for the Fermat septimic surface in $\P^{3}$? We suspect that the answer is $7^{3} \times 24$, the size of the automorphism group of the Fermat septimic.  
\end{example}

Finally, and perhaps most importantly, there remains the question of improving \autoref{theorem:versal}. Indeed, it should be the case that the point-line correspondence is versal at every $k$-incident line with $k \geq d - (2r-2)$ for a general hypersurface $X$.  In fact, there should be an analogous versality result for higher dimensional plane sections of $X$ -- this is the subject of future work.

\bibliography{references}
\bibliographystyle{alpha}

\end{document}